\newtheorem{theorem}{Theorem}[section]
\newtheorem{corollary}[theorem]{Corollary}
\newtheorem{lemma}[theorem]{Lemma}
\newtheorem{defn}{Definition}
\newtheorem{claim}{Claim}[section]
\theoremstyle{remark}
\newtheorem*{remark}{Remark}
\newcommand{\D}{\mathbb{D}}
\newcommand{\C}{\mathbb{C}}
\title{One component bounded functions}
\author{Carlo Bellavita and Artur Nicolau 
\thanks{The second author is supported in part by the Generalitat de Catalunya (grant 2017 SGR 395), the Spanish Ministerio de Ciencia e Innovaci\'on (project  MTM2017-85666-P) and the Spanish Research Agency (Mar\'ia de
Maeztu Program CEX2020-001084-M )} }
\date{}
\newcommand{\Addresses}{{
  \bigskip
  \footnotesize

  C.~Bellavita, \textsc{Department of Mathematics, Università degli Studi di Milano, Milan, Italy}\par\nopagebreak
  \textit{E-mail address}, C.~Bellavita: \texttt{carlo.bellavita@unimi.it}

  \medskip

  A.~Nicolau 
  , \textsc{Departament de  
Matem\`atiques, Universitat Aut\`onoma de Barcelona and Centre de Recerca Matem\`atica, 08193 Barcelona}\par\nopagebreak
  \textit{E-mail address}, A.~Nicolau: \texttt{artur@mat.uab.cat}

}}
\begin{document}
\maketitle

\begin{abstract}
\noindent Three different characterizations of one-component bounded analytic functions are provided. The first one is related to the the inner-outer factorization, the second one is in terms of the size of the reproducing kernels in the corresponding de Branges-Rovnyak spaces and the last one concerns the associated Clark measure.
\end{abstract}

\section{Introduction}
\label{sec1}

Let $\D$ be the open unit disc of the complex plane. Let ${H}^{\infty}(\mathbb{D})$ be the algebra of bounded analytic functions in~$\mathbb{D}$ and let $\|b\|_{\infty}=\sup \{|b(z)|: z\in \mathbb{D}\}$ for $b \in {H}^{\infty}(\mathbb{D}) $. An inner function is a bounded analytic function in $\D$ whose radial limits have modulus one at almost every point of the unit circle $\partial \D$.   
An inner function $b$ is called one-component if there exists a number $0<C<1$ such that the sublevel set
    $\{z\in \D:|b(z)|<C \}$
is connected. One-component inner functions were introduced by B. Cohn in \cite{C} in connection with Carleson measures for model spaces and have been already widely studied. Indeed, as subsequently proved by A. Volberg and S.Treil in \cite{VT} and by A. Aleksandrov in \cite{A2}, it is possible to provide an useful geometric characterization of Carleson measures for model spaces corresponding to one-component inner functions. Other descriptions of one-component inner functions have been given in \cite{A3}, \cite{NR} and \cite{Be}, algebraic properties of this family have been studied in \cite{CM1}, \cite{R} and \cite{CM2}, and further properties of the corresponding model spaces have been given in \cite{A1}, \cite{Ba}, \cite{Be} and \cite{BBK}.

\vspace{11 pt}\noindent
This paper is devoted to the study of one-component bounded functions. We start with the definition of this family. Given $b \in H^\infty(\mathbb{D})$ with $\|b\|_{\infty}=1$, its spectrum is defined as  
$$
\text{spec}(b):=\left\lbrace \xi \in \partial \mathbb{D}: \ \liminf_{z \to \xi }|b(z)|<1\right\rbrace\ .
$$
\begin{defn}\label{def1.1}
Let $b \in H^\infty(\mathbb{D})$ with $\|b\|_{\infty}=1$. Then, $b$ is called one-component if there exists a constant $0<C<1$ such that the following two conditions hold  
\begin{enumerate}[label=(\alph*)]
\item $\text{spec}(b) \subseteq \overline{ \left\lbrace z \in \mathbb{D}: |b(z)|\leq C\right\rbrace}$ , 
\item The set $\{z \in \mathbb{D}:\  |b(z)|<C\}$ is connected.
\end{enumerate}
\end{defn}

\noindent
One-component bounded functions were introduced by A. Baranov, E. Fricain and J. Mashreghi in \cite{BFM}, where the authors studied Carleson measure for de Branges-Rovnyak spaces. They provided an useful characterization for such measures for de Branges-Rovnyak spaces corresponding to one-component bounded functions. Note that one-component inner functions are one-component bounded functions because if $b$ is inner, spec$(b) = \left\{ \xi \in \partial \mathbb{D}: \ \liminf_{z \to \xi }|b(z)|=0\right\}$ (see \cite{Ga}). 
Therefore, when $b$ is inner, Condition \emph{a)} of Definition \ref{def1.1} is always satisfied. 

\vspace{11 pt}\noindent
In this paper, we provide several characterizations of  one-component bounded functions, which are analogous to the already known results holding for one-component inner functions. 
We will extend to this bigger family the description of \cite{NR} in terms of the positions of the zeros, of \cite{A2} in terms of the growth of the reproducing kernels of the associated de Branges-Rovnyak spaces and, finally, of \cite{Be} in terms of the expression of the associated Clark measures. It is worth mentioning that our proofs use some ideas from these previous papers but some new arguments are also required to deal with more general situation.

\vspace{22 pt}\noindent
Given $b \in H^\infty(\mathbb{D})$, as already done in \cite{Bi}, we consider the measure $\sigma_b$ with support in $\overline{\mathbb{D}}$ associated to $|b(z)|$. Indeed, if $b=B_{\{z_n\}}S_\mu O_b$ is the classical factorization of $b$ in a Blaschke product $B_{\{z_n\}}$ with zeros $\{ z_n \}$, a singular inner function $S_\mu$ and an outer factor $O_b$, we define
$$
d \sigma_b(z):=\sum_n\left(1-|z_n|\right)\delta_{z_n}(z)+d\mu(z)+\log|b(z)|^{-1}dm(z) , \quad z \in \overline{\D} , 
$$
where $dm$ is the Lebesgue measure on $\partial \mathbb{D}$.
 We recall also that if $z \in \mathbb{D} \setminus \{0 \}$, the associated Carleson square $Q(z)$ is defined as
$
Q(z):=\left\lbrace w \in \mathbb{D}: \ |z|<|w|<1,\ |w/|w|-z/|z||<(1-|z|)/2\right\rbrace\ .
$ Given a Carleson square $Q(z)$, $z \in \mathbb{D}  \setminus \{0 \}$, we denote its sidelenght by $\ell(Q(z)) = 1- |z|$ and its top part by $T(Q(z)):= \left\lbrace w \in  Q(z) : |z|<|w|<(1+|z|)/2\right\rbrace.
$ Our first description of one-component bounded functions uses the measure $\sigma_b$ and it is analogous to the main result in \cite{NR}.

\begin{theorem}\label{theo1.1}
Let $b \in H^\infty(\mathbb{D})$ with $||b||_{\infty}= 1$. Then, $b$ is one-component if and only if there exists $0<C_1<1$ such that
\begin{equation}\label{eq1.1}
    \sigma_b\left( Q(z)\right)=0 \quad\text{whenever}\quad |b(z)|\geq C_1\ .
\end{equation}
\end{theorem}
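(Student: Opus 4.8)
The plan is to analyse the subharmonic function $u:=-\log|b|\ge 0$ through its Riesz--Herglotz representation
\[
u(z)=\sum_n g(z,z_n)+\int_{\partial\mathbb D}P(z,\xi)\,d\nu_b(\xi),\qquad \nu_b:=\mu+\log|b|^{-1}dm ,
\]
where $g(z,w)=\log|(1-\bar wz)/(z-w)|$ and $P$ is the Poisson kernel; all three terms are non‑negative. Two consequences are the backbone. \emph{(i) One‑sided size estimate.} If $z\neq 0$ and $z^{*}$ denotes the centre of $T(Q(z))$, then, retaining in the representation only the zeros lying in $Q(z)$ and the part of $\nu_b$ on $Q(z)\cap\partial\mathbb D$, and using $g(z^{*},z_n)\gtrsim(1-|z_n|)/\ell(Q(z))$ for $z_n\in Q(z)$ and $P(z^{*},\xi)\gtrsim 1/\ell(Q(z))$ for $\xi\in Q(z)\cap\partial\mathbb D$, one gets $\sigma_b(Q(z))\lesssim \ell(Q(z))\,u(z^{*})$. \emph{(ii) Harnack comparison.} If $\sigma_b(Q(z))=0$, then $Q(z)$ contains no zeros and $\nu_b$ vanishes on $Q(z)\cap\partial\mathbb D$, so on $Q(z)$ the function $u$ coincides with the positive harmonic function obtained by discarding those terms; Harnack's inequality then supplies an absolute $A\ge 1$ with $u(w)\le A\,u(w')$ for $w,w'$ in a fixed hyperbolic neighbourhood of $z^{*}$ inside $Q(z)$, i.e.\ on a box carrying no $\sigma_b$‑mass $|b|$ is trapped between fixed powers of itself. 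Finally I record the (standard) inclusion $\mathrm{spec}(b)\subseteq\mathrm{supp}(\sigma_b)$: if $\xi\notin\mathrm{supp}(\sigma_b)$, then $\xi$ is not a limit of zeros and $\nu_b$ vanishes near $\xi$, and estimating the three terms shows $u(z)\to 0$ as $z\to\xi$. Throughout put $S:=\{z\in\mathbb D\setminus\{0\}:\sigma_b(Q(z))>0\}$; since $Q(tz)\supseteq Q(z)$ for $0<t<1$, the segment $(0,z]$ lies in $S$ whenever $z\in S$, so $\overline S$ is connected and contains $\mathrm{supp}(\sigma_b)$, and \eqref{eq1.1} is exactly the assertion $\sup_{S}|b|<1$.

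Assume first that \eqref{eq1.1} holds with constant $C_1$, and put $C:=(1+C_1)/2$. Condition (a) is easy: if $\xi\in\mathrm{spec}(b)\subseteq\mathrm{supp}(\sigma_b)$, then for every $r<1$ the box $Q(r\xi)$ carries positive $\sigma_b$‑mass (it catches either a tail of a sequence of zeros converging to $\xi$, or an open sub‑arc about $\xi$ bearing $\mu$‑ or $\log|b|^{-1}$‑mass), so \eqref{eq1.1} forces $|b(r\xi)|<C_1<C$; letting $r\to1$ gives $\xi\in\overline{\{|b|<C\}}$. Condition (b), that $\Omega:=\{|b|<C\}$ is connected, is the heart of the matter and the main obstacle. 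Here $S\subseteq\{|b|<C_1\}\subseteq\Omega$ and $(0,z]\subseteq S\subseteq\Omega$ for $z\in S$, so $S$ is anchored at $0$; one first checks that $S$ is connected — this is where \eqref{eq1.1} is used essentially, since if $S$ split into pieces pointing in separated directions, a point $z$ in the gap with $Q(z)$ still meeting the mass of one piece would have $|b(z)|$ close to $1$ yet $\sigma_b(Q(z))>0$, contradicting \eqref{eq1.1} (made quantitative via the size estimate (i) and a harmonic‑measure bound on $\mathbb D\setminus\overline S$). One then shows every $z_0\in\Omega\setminus S$ joins to $S$ inside $\Omega$: on the component $W$ of $\mathbb D\setminus\overline S$ about $z_0$ the function $u$ is positive and harmonic, so $u\le\sup_{\partial W}u$, and $\partial W$ consists of a piece of $\partial\overline S$ and a piece of $\partial\mathbb D$ on which $\log|b|^{-1}$ is controlled using $\mathrm{spec}(b)\subseteq\overline S$ and condition (a); this forces $W$, hence $z_0$, to be adjacent to $S$ in $\Omega$. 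Making this dichotomy precise — in particular handling the part of $\partial\mathbb D$ carrying only $\log|b|^{-1}$‑mass (the inclusion $\mathrm{supp}(\log|b|^{-1}dm)\subseteq\overline{\{|b|\le C\}}$ can fail for a general $b$ and must here be extracted from (a) and (b)), and absorbing the Harnack loss — is exactly the point where the passage from inner to bounded $b$ needs arguments beyond those of \cite{NR}.

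For the converse, suppose $b$ is one‑component with constant $C$; we must produce $C_1\in(C,1)$ with $\sup_S|b|<C_1$. If $|b(z)|$ is close to $1$, then each factor $B,S_\mu,O_b$ is close to $1$ at $z$, so $z$ has a fixed pseudohyperbolic neighbourhood free of zeros and carrying little $\nu_b$‑mass; by Harnack $|b|\ge|b(z)|^{A}$ on $B(z,1/2)$, so there is $C_1\in(C,1)$ with $|b(z)|\ge C_1\Rightarrow|b|>C$ on $B(z,1/2)$, i.e.\ $F\cap B(z,1/2)=\varnothing$ with $F:=\overline{\{|b|\le C\}}$. Suppose, for contradiction, that some $z$ has $|b(z)|\ge C_1$ and $\sigma_b(Q(z))>0$. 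The latter yields a point $p\in\overline{Q(z)}$ where $\sigma_b$ concentrates, and one checks that $F$ meets $\overline{Q(z)}$ near $p$: immediate if $p$ is a zero; a consequence of $|b(r\eta)|\to0$ for $\mu$‑a.e.\ $\eta$ if $p\in\mathrm{supp}(\mu)$; and in the remaining case $p\in\mathrm{supp}(\log|b|^{-1}dm)$ it follows from the size estimate (i) — which, since $|b(z)|$ is near $1$, makes the $\log|b|^{-1}$‑mass on $Q(z)\cap\partial\mathbb D$ small compared with $\ell(Q(z))$ — together with connectedness of $\Omega$ and condition (a), forcing $p\in\mathrm{spec}(b)\subseteq F$. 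Thus the connected set $F$ meets $\overline{Q(z)}$ yet avoids $B(z,1/2)$, while by the normalisation in the definition of $Q(z)$ the inner arc $\{|w|=|z|\}\cap\overline{Q(z)}$ lies in $B(z,1/2)$ (there $\varrho(w,z)\le|z|/2<1/2$). A geometric analysis of how a connected set can thread through $Q(z)$ and the adjacent Carleson boxes bearing $\sigma_b$‑mass — each forcing, by (i), $|b|$ small at its apex — then produces, after finitely many steps, a box whose apex lies in $B(z,1/2)$, the required contradiction. As in the first implication, the delicate ingredient is the outer part of $\sigma_b$, which is why condition (a) must be used.
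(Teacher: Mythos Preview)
Your proposal assembles the right potential‑theoretic ingredients (the Riesz representation of $-\log|b|$, the Carleson‑box size estimate, Harnack on mass‑free boxes), but at the two decisive junctures it remains a sketch rather than a proof, and in each case the paper supplies a concrete device that your outline is missing.

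\textbf{Necessity (one‑component $\Rightarrow$ \eqref{eq1.1}).} You correctly reduce to the situation $|b(z)|$ close to $1$, $F:=\overline{\{|b|\le C\}}$ meeting $\overline{Q(z)}$, and $F\cap B(z,1/2)=\varnothing$. But your proposed ``geometric analysis of how a connected set can thread through $Q(z)$ and the adjacent Carleson boxes\ldots\ after finitely many steps'' is not an argument: $F$ can enter $Q(z)$ from the side at any depth while avoiding $B(z,1/2)$, and iterating to neighbouring boxes does not terminate in a controlled way. The paper resolves this in one stroke with harmonic measure. Since $\Omega_C$ is connected and reaches into $Q(z)$, there is a curve $\Gamma\subset\{|b|\le C\}$ from the origin to a point of $Q(z)$; the maximum principle gives $\log|b(z)|\le(\log C)\,\omega(z,\Gamma,\mathbb D\setminus\Gamma)$, and a Hall‑type estimate (Lemma~\ref{lem2.3}) provides a universal lower bound $\omega(z,\Gamma,\mathbb D\setminus\Gamma)\ge c>0$ whenever $\rho(z,\Gamma)\ge 1/2$. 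This immediately forces $|b(z)|\le C^{\,c}<1$, the desired contradiction. Your outline lacks any substitute for this harmonic‑measure bound.

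\textbf{Sufficiency (\eqref{eq1.1} $\Rightarrow$ connectedness of $\Omega_C$).} You observe that $S=\{z:\sigma_b(Q(z))>0\}$ is star‑shaped, hence connected, and that $S\subset\Omega_C$; but then you yourself flag that ``making this dichotomy precise\ldots\ is exactly the point where the passage from inner to bounded $b$ needs arguments beyond those of \cite{NR}'', and the subsequent maximum‑principle sketch on components of $\mathbb D\setminus\overline S$ does not close. The difficulty is real: you must rule out a simply connected component $\Omega_2$ of $\Omega_C$ sitting entirely in the $\sigma_b$‑free region, and nothing in your outline prevents $|b|$ from dipping below $C$ there. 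The paper handles this cleanly in two moves. First, it organises the $\sigma_b$‑free region via the maximal dyadic squares $Q_j$ with $\sup_{T(Q_j)}|b|\ge C_2$; Schwarz plus \eqref{eq1.1} give $\sigma_b(2Q_j)=0$, and then the representation of $-\log|b|$ (your idea (ii), made precise as Claim~\ref{cl3.1}) yields that $(1-|w|)^{-1}\log|b(w)|^{-1}$ is \emph{comparable to a constant} throughout each $Q_j$. Second, any putative extra component $\Omega_2\subset\bigcup_j Q_j$ supports, via a Riemann map, an inner function $b\circ\phi/C_3$, forcing $\inf_{\Omega_2}|b|=0$; this contradicts the lower bound on $|b|$ just obtained. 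This inner‑function contradiction is the step your proposal does not contain and for which a bare Harnack/maximum‑principle argument is not an adequate replacement.

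In short, both halves of your plan stall precisely where the paper injects its two specific tools: Hall's lemma (Lemma~\ref{lem2.3}) for necessity, and the maximal‑square decomposition together with Claim~\ref{cl3.1} and the inner‑function argument for sufficiency.
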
\noindent 
It will be shown that the spectrum of a one-component bounded function must be a proper closed subset of the unit circle. As an application of Theorem \ref{theo1.1}, we obtain the following converse result.  

\begin{theorem}\label{theo1.2}
Let $E  \subsetneq \mathbb{T}$ be a closed set. Then there exists a one-component bounded function $b$ such that spec$(b)=E$.
\end{theorem}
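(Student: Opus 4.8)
\emph{Proof sketch.} The plan is to reduce to Theorem \ref{theo1.1}: given a closed set $E\subsetneq\mathbb{T}$, it suffices to produce $b\in H^\infty(\mathbb{D})$ with $\|b\|_\infty=1$, $\text{spec}(b)=E$, and $\sigma_b(Q(z))=0$ whenever $|b(z)|\ge C_1$, for some $0<C_1<1$. I would take $b=B\cdot O$, where $O$ is the outer function with $\log|O|=-\mathbf{1}_E$ on $\mathbb{T}$, so that $|O(z)|=\exp(-\omega(z,E))$, $\omega(\cdot,E)$ denoting the harmonic measure of $E$; and $B$ is the Blaschke product whose zeros come from a Whitney decomposition $\{J_i\}$ of the open set $\mathbb{T}\setminus E$ (each $J_i$ a subarc with $\ell(J_i)\asymp\operatorname{dist}(J_i,E)$, essentially disjoint, covering $\mathbb{T}\setminus E$): for each $i$ put one zero $z_i=(1-\ell(J_i))\zeta_i$, with $\zeta_i$ the midpoint of $J_i$, so that $z_i$ lies in the top part of the Carleson square with base $J_i$. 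Then $\sum_i(1-|z_i|)=\sum_i\ell(J_i)\le 2\pi$, so $B$ is a genuine Blaschke product, and $\|b\|_\infty=1$ since $|b|\le 1$ while $|b(z)|\to 1$ as $z$ approaches any point of the nonempty open set $\mathbb{T}\setminus E$ (there $|O(z)|\to1$ because $\omega(\cdot,E)\to0$, and $|B(z)|\to1$ because the zeros stay away from such a point).

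First I would verify $\text{spec}(b)=E$. Since $\text{spec}(b)=\text{spec}(B)\cup\text{spec}(O)$, and $\text{spec}(B)$ equals the set of accumulation points of $\{z_i\}$ on $\mathbb{T}$, one gets $\text{spec}(B)=\partial(\mathbb{T}\setminus E)=\partial E$. On the other hand $\text{spec}(O)\subseteq\overline{E}=E$ because $\log|O|$ vanishes a.e.\ off $E$, hence $\omega(z,E)\to0$ at every point of $\mathbb{T}\setminus E$; while any $\xi\in\operatorname{int}(E)$ has $E$-density $1$, so $\omega(z,E)\to1$ and $|O(z)|\to e^{-1}<1$ as $z\to\xi$, giving $\operatorname{int}(E)\subseteq\text{spec}(O)$. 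Since $\partial E\cup\operatorname{int}(E)=E$, we conclude $\text{spec}(b)=E$.

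The core of the argument is checking \eqref{eq1.1}, and here the main obstacle is that a single zero of $B$ lying in $Q(z)$ need \emph{not} make $|b(z)|$ small — it may sit deep inside the box. The statement to establish is that there is $c>0$ with $-\log|b(z)|\ge c$ whenever $\sigma_b(Q(z))>0$; Theorem \ref{theo1.1} then applies with $C_1=e^{-c/2}$. This rests on two points. (i) If $\sigma_b(Q(z))>0$ then $z$ lies in the fixed sawtooth $\Omega=\{w\in\mathbb{D}:\operatorname{dist}(w/|w|,E)\le 3(1-|w|)\}$: otherwise the base arc $I(z)$ of $Q(z)$ is at distance $\gtrsim\ell(Q(z))$ from $E$, so $|E\cap I(z)|=0$ and every Whitney arc meeting $I(z)$ is too long for its zero to lie in $Q(z)$, forcing $\sigma_b(Q(z))=0$. (ii) For $z\in\Omega$ one has $-\log|b(z)|\gtrsim1$: letting $I'(z)$ be a fixed small subarc of $I(z)$ concentric with it, either some Whitney arc of length comparable to $\ell(Q(z))$ meets $I'(z)$, in which case its zero is within bounded pseudohyperbolic distance of $z$ and the bound is immediate; or else $I'(z)\setminus E$ is covered by Whitney arcs contained in a bounded dilate of $I(z)$, whose zeros contribute (using $1-\rho(z,z_i)\asymp\ell(Q(z))\ell(J_i)/|1-\bar{z}z_i|^2\gtrsim\ell(J_i)/\ell(Q(z))$) at least $\gtrsim|I'(z)\setminus E|/\ell(Q(z))$ to $\sum_i\log(1/\rho(z,z_i))$, while $-\log|O(z)|=\omega(z,E)\gtrsim|I'(z)\cap E|/\ell(Q(z))$; adding, $-\log|b(z)|\gtrsim|I'(z)|/\ell(Q(z))\gtrsim1$.

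The delicate bookkeeping lies in (ii): one has to deal with the finitely many Whitney arcs that straddle the endpoints of $I'(z)$ and confirm that the sum over Whitney arcs genuinely contained in a dilate of $I(z)$ still captures a fixed proportion of $|I'(z)\setminus E|$; the inequality $1-\rho(z,z_i)\gtrsim\ell(J_i)/\ell(Q(z))$ for such arcs is exactly what turns the \emph{collective} contribution of many small zeros into a lower bound. Granting (i) and (ii), Theorem \ref{theo1.1} shows $b$ is one-component, which finishes the proof. Note that no singular inner factor is required: the outer factor $O$ already supplies the mass near the ``thick'' part of $E$ where the (necessarily summable) Blaschke zeros cannot reach, while the Blaschke part handles the remaining part of $E$, at which $O$ contributes nothing.
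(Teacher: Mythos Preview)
Your proposal is correct (modulo adjusting the constant $3$ in the sawtooth and the threshold in the ``big arc'' dichotomy to match your Whitney constants, which is cosmetic). The route, however, is genuinely different from the paper's. The paper also writes $b=B\cdot O$ with the same outer factor, but instead of placing one zero per Whitney interval it lays zeros \emph{densely}---at fixed pseudohyperbolic separation $\delta<1/2$---along the boundaries of the ``isosceles triangles'' $T_k$ erected over the complementary arcs $I_k$ of $E$. Schwarz's lemma then gives $|b|\le 1/2$ on $\cup_k\partial T_k\cap\mathbb{D}$, the outer factor gives $|b|\le 1/2$ a.e.\ on $E$, and the maximum principle on $\mathbb{D}\setminus\cup_k T_k$ forces $|b|\le 1/2$ there; hence $|b(z)|>C>1/2$ implies $z$ lies inside some $T_k$, and the tent geometry yields $\sigma_b(Q(z))=0$. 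That argument is shorter and sidesteps all the summation bookkeeping of your step~(ii), at the cost of using infinitely many zeros per complementary arc. Your construction is more economical (a single zero per Whitney box), and your estimate in~(ii)---splitting $I'(z)$ into its $E$-part (controlled by the outer factor via $\omega(z,E)\gtrsim|I'(z)\cap E|/\ell(Q(z))$) and its $E^c$-part (controlled collectively by the small zeros via $\sum(1-\rho(z,z_i))\gtrsim|I'(z)\setminus E|/\ell(Q(z))$)---is a clean quantitative substitute for the maximum-principle step, and makes explicit how the Blaschke and outer factors share the work.
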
\noindent
We point out that, when $m(E)=0$, it is always possible to find a one-component inner function whose spectrum is equal to $E$ (\cite{NR}). 

\vspace{11 pt}\noindent
Our second characterization of one-component bounded functions   uses the reproducing kernels of the associated de Branges-Rovnyak spaces. In this paper we will not work with the 
de Branges-Rovnyak spaces and we will not introduce them. The interested reader may deepen this subject looking at \cite{FM2}. The only fact one needs to keep in mind, is that the de Branges-Rovnyak spaces are reproducing kernel Hilbert spaces of holomorphic functions in the unit disk $\mathbb{D}$. Let $b \in H^\infty(\mathbb{D})$ with $||b||_{\infty}= 1$. The reproducing kernel of $H(b)$ at $a \in \mathbb{D}$, is 
$$
k^b_a(z):=\frac{1-\overline{b(a)}b(z)}{1-\overline{a}z}\ , \qquad z \in \mathbb{D} , 
$$
that is, $\left\langle f,k^b_a\right\rangle_{H(b)}=f(a)$, for every $f \in H(b)$. If the function $b$ is inner, the associated de Branges-Rovnyak space is the classical model space. We next describe one-component bounded functions in terms of the size of $k^b_a(z)$.
\begin{theorem}\label{theo1.3}\
\begin{enumerate}
\renewcommand{\labelenumi}{\alph{enumi})}
\item Let $b$ be a one-component bounded function. Then there exists a constant $C=C(b) >0$ such that for every $a \in \mathbb{D}$ we have
\begin{equation}\label{eq1.2}
    \sup_{z\in \mathbb{D}}\bigg| \frac{1-\overline{b(a)}b(z)}{1-\overline{a}z}\bigg| \leq C  \frac{1-|b(a)|^2}{1-|a|^2}\ .
\end{equation}
\item Let $b \in H^\infty(\mathbb{D})$ with $\|b \|_{\infty} = 1$. Assume there exists a constant $0<C<1$ such that  
\begin{equation}\label{eq1.3}
    \text{spec}(b) \subseteq \overline{\left\lbrace z\in \mathbb{D}: |b(z)|\leq C\right\rbrace}\ .
\end{equation}
If 
\begin{equation}\label{eq1.4}
    \sup_{z\in \mathbb{D}}\bigg| \frac{1-\overline{b(a)}b(z)}{1-\overline{a}z}\bigg|=o\left(  \frac{1}{1-|a|^2}\right)\ , \text{ as } |b(a)| \to 1,
\end{equation}
then $b$ is one-component.
\end{enumerate}
\end{theorem}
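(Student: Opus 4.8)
The plan is to handle the two parts separately, in both cases reducing matters to the characterization of Theorem~\ref{theo1.1}. I will use the elementary remark that $|1-\overline{a}z|\asymp 1-|a|$, with absolute constants, whenever $z\in Q(a)$, and more generally whenever $z$ lies within Euclidean distance $c(1-|a|)$ of the arc $\overline{Q(a)}\cap\partial\mathbb{D}$.

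For part (b), by Theorem~\ref{theo1.1} it is enough to produce $C_1\in(0,1)$ such that $\sigma_b(Q(a))=0$ whenever $|b(a)|\ge C_1$; equivalently, for such $a$ the function $b$ must have no zero in $Q(a)$ and the measure $d\mu+\log|b|^{-1}\,dm$ must give no mass to the arc $I(a)=\overline{Q(a)}\cap\partial\mathbb{D}$. If $z_0\in Q(a)$ were a zero then $k^b_a(z_0)=(1-\overline{a}z_0)^{-1}$, so $|k^b_a(z_0)|\ge c(1-|a|^2)^{-1}$ with $c$ absolute, which by \eqref{eq1.4} is incompatible with $|b(a)|$ being close to $1$. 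For the boundary part one uses hypothesis \eqref{eq1.3}: the closed support in $\partial\mathbb{D}$ of $d\mu+\log|b|^{-1}\,dm$ lies in $\mathrm{spec}(b)$ (the singular part, the outer defect, and the boundary cluster set of the zeros all belong there), hence in $\overline{\{|b|\le C\}}$ by \eqref{eq1.3}. So if $\sigma_b$ charged $I(a)$, there would be $\xi^\ast\in I(a)$ with $\liminf_{z\to\xi^\ast}|b(z)|<1$, hence points $w$ arbitrarily close to $\xi^\ast$ with $|b(w)|\le C$; taking $w$ close enough gives $|1-\overline{a}w|\le c'(1-|a|)$ and therefore $|k^b_a(w)|\ge(1-C)|1-\overline{a}w|^{-1}\gtrsim_C(1-|a|^2)^{-1}$, again contradicting \eqref{eq1.4} once $|b(a)|$ is sufficiently close to $1$. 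Choosing $C_1$ close enough to $1$ to make both contradictions bite concludes part (b).

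For part (a), write $\rho=|b(a)|$ and $\xi_0=a/|a|$. The regime $\rho\le\rho_0<1$ is immediate since $\sup_z|k^b_a(z)|\le 2(1-|a|)^{-1}$, so assume $\rho$ close to $1$. By the maximum principle $\sup_{z\in\mathbb{D}}|k^b_a(z)|$ equals the essential supremum over $\partial\mathbb{D}$ of $|1-\overline{b(a)}b(\xi)|/|\xi-a|$, and it is this that must be dominated by $C(b)\,(1-\rho^2)(1-|a|^2)^{-1}$. A quantitative Schwarz--Pick argument shows $|b|>C_1$ on the pseudohyperbolic disc $\{z:\rho(z,a)<r^\ast\}$ with $1-r^\ast\asymp 1-\rho$, where $C_1$ is the constant of Theorem~\ref{theo1.1} (which may be taken as close to $1$ as we wish); Theorem~\ref{theo1.1} then makes $\sigma_b$ vanish on the Carleson-box region shadowed by this disc. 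In particular there is an arc $J$ centred at $\xi_0$ of length $\asymp R:=(1-|a|^2)/(1-\rho^2)$ with $|b|=1$ a.e.\ on $J$ and $b$ zero-free near $J$. For $\xi\notin J$ one has $|\xi-a|\gtrsim R$ and hence $|k^b_a(\xi)|\le 2|\xi-a|^{-1}\lesssim(1-\rho^2)(1-|a|^2)^{-1}$. For $\xi\in J$ the crux is to use that $u:=-\log|b|$ is nonnegative, harmonic across $J$, vanishing on $J$, and of size $\asymp 1-\rho^2$ at $a$; from this one should extract $|b(a)-b(\xi_0)|\lesssim 1-\rho^2$ and $|b(\xi)-b(\xi_0)|\lesssim(1-\rho^2)|\xi-\xi_0|(1-|a|)^{-1}$, and then $|1-\overline{b(a)}b(\xi)|\le(1-\rho^2)+|b(\xi)-b(a)|\lesssim(1-\rho^2)|\xi-a|(1-|a|^2)^{-1}$, so that $|k^b_a(\xi)|\lesssim(1-\rho^2)(1-|a|^2)^{-1}$.

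I expect the last step to be the real obstacle: controlling the oscillation of $b$ near the good arc $J$ at the scale $1-|a|$. Schwarz--Pick alone gives only $|b'(w)|\lesssim(1-|w|)^{-1}$, which is useless once integrated along a path descending from $a$ to $\partial\mathbb{D}$, so one must genuinely exploit that $b$ is zero-free with $|b|\equiv1$ on $J$ and estimate $u=-\log|b|$ and its gradient through its Poisson--Green representation, together with the information — provided by Theorem~\ref{theo1.1} and the quantitative Schwarz lemma — that the relevant mass of $\sigma_b$, in particular the nearest zeros of $b$, sits at distance $\gtrsim R$ from $\xi_0$. This is precisely where the arguments of \cite{A2} and \cite{NR} have to be reworked in the de~Branges--Rovnyak setting; I anticipate it being the longest and most delicate part of the proof, and some extra bookkeeping will be needed to track the constant $C(b)$ uniformly in $a$.
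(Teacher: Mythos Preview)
Your argument for part (b) is correct and essentially the paper's: both reduce to Theorem~\ref{theo1.1} and reach a contradiction by exhibiting, near $Q(a)$, a point $w$ with $|b(w)|\le C$ and hence $|k^b_a(w)|\gtrsim_C (1-|a|)^{-1}$, against \eqref{eq1.4}. The paper compresses your zero/boundary split into a single line: \eqref{eq1.4} forces $Q(a_n)\cap\Omega_C=\emptyset$ when $|b(a_n)|\to 1$, so by \eqref{eq1.3} $\overline{Q(a_n)}\cap\mathrm{spec}(b)=\emptyset$, hence $\sigma_b(Q(a_n))=0$.

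For part (a) your strategy is right and matches the paper's, but you over-anticipate the difficulty of the final oscillation estimate. Once you know $\sigma_b$ vanishes on a Carleson box of sidelength $\asymp R$ about $\xi_0$, the bound $|b'(\xi)|\lesssim 1/R$ on $J$ is immediate from the explicit identity (cf.\ the proof of Lemma~\ref{lem2.6})
\[
\frac{\xi b'(\xi)}{b(\xi)}\ \cong\ \int_{\overline{\mathbb{D}}}\frac{d\sigma_b(z)}{|1-\bar z\,\xi|^2},\qquad \xi\in J,
\]
since all the mass lies at distance $\gtrsim R$ from $\xi$ and the same integral evaluated at a point of the top of the box is $\cong \log|b(a)|^{-1}/(1-|a|)\cong 1/R$; this is precisely Claim~\ref{cl3.1} and its consequence \eqref{eq4.1}--\eqref{eq4.1.2}. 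The radial piece $|b(a)-b(\xi_0)|\lesssim 1-\rho^2$ then follows from Lemma~\ref{lem2.5}, which controls $|b'(r\xi)|$ by $|b'(\xi)|$. The paper organises all this through maximal dyadic Carleson squares $Q(z_j)$ (so that $a\in Q(z_j)$ with $\ell(Q(z_j))\cong R$) and the decomposition $b(z)-b(a)=[b(z)-b(z^*)]+[b(z^*)-b(a^*)]+[b(a^*)-b(a)]$ for $z\in 2Q(z_j)$; your reduction to $\partial\mathbb{D}$ via the maximum principle is an equally valid packaging. No Green-potential machinery beyond the displayed identity is required, and the step you flag as ``the real obstacle'' is in fact only a few lines.
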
\noindent
In Theorem \ref{theo1.3} we have considered the reproducing kernels $k^b_a(z)$ at a point $a \in \mathbb{D}$. In the following results, we deal also with $a \in \partial\mathbb{D}$. It is easy to see that any one-component bounded function $b$ extends analytically across $ \partial \mathbb{D}\setminus$ spec$(b)$, see \cite{FM}. 

\begin{theorem}\label{theo1.4}\
\begin{enumerate}
\renewcommand{\labelenumi}{\alph{enumi})}
\item Let $b$ be a one-component bounded function. Then there exists a constant $C=C(b) >0$ such that
\begin{equation}\label{eq1.5}
    \frac{|b(z)-b(\xi)|}{|z-\xi|} \leq C \big| b'(\xi)\big|\ , \qquad   z \in \mathbb{D} , \, \xi \in \partial \mathbb{D}\setminus spec(b)\ .  
\end{equation}
\item Let $b \in H^\infty(\mathbb{D})$ with $\|b \|_{\infty} = 1$. Assume there exists $0<C<1$ such that spec$(b)\subset \overline{\left\lbrace z \in \mathbb{D}: \ |b(z)|\leq C\right\rbrace}$. If estimate (\ref{eq1.5}) holds,  the function $b$ is one-component.
\end{enumerate}
\end{theorem}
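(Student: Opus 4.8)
The plan is to obtain part (a) by passing to a boundary limit in part (a) of Theorem~\ref{theo1.3}, and part (b) by showing that (\ref{eq1.5}) implies the hypothesis (\ref{eq1.4}) of part (b) of Theorem~\ref{theo1.3}. For part (a), fix $\xi\in\partial\mathbb{D}\setminus\text{spec}(b)$. Since $b$ extends analytically across $\xi$ and $\liminf_{z\to\xi}|b(z)|=1$ while $|b|\le 1$, one has $|b(\xi)|=1$, and $b$ maps the boundary arc around $\xi$ into $\partial\mathbb{D}$, so that $\overline{b(\xi)}\,b'(\xi)\,\xi$ is a positive real number equal to $|b'(\xi)|$. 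I would then apply (\ref{eq1.2}) at the points $a=r\xi$ and let $r\to 1^-$: for each fixed $z\in\mathbb{D}$ the left-hand side tends to
\[
\frac{\big|1-\overline{b(\xi)}\,b(z)\big|}{\big|1-\overline{\xi}\,z\big|}=\frac{|b(\xi)-b(z)|}{|\xi-z|}
\]
(using $|b(\xi)|=|\xi|=1$), while a first-order expansion of $b$ at $\xi$ gives $(1-|b(r\xi)|^2)/(1-r^2)\to\operatorname{Re}\big(\overline{b(\xi)}\,b'(\xi)\,\xi\big)=|b'(\xi)|$. Passing to the limit in (\ref{eq1.2}) yields (\ref{eq1.5}).

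For part (b), assume (\ref{eq1.5}) and $\text{spec}(b)\subseteq\overline{\{|b|\le C\}}$. As in Theorem~\ref{theo1.3}, this inclusion forces $\text{spec}(b)=\overline{\{z\in\mathbb{D}:|b(z)|\le C\}}\cap\partial\mathbb{D}$, a closed set, with $b$ analytic across $\partial\mathbb{D}\setminus\text{spec}(b)$ and $|b|=1$ there; it then suffices to verify (\ref{eq1.4}) and invoke part (b) of Theorem~\ref{theo1.3}, whose other hypothesis (\ref{eq1.3}) is precisely our assumption. Take $a\in\mathbb{D}$ with $|b(a)|$ close to $1$ (so $|a|$ is close to $1$, since by the maximum principle $|b|<1$ in $\mathbb{D}$ unless $b$ is a unimodular constant, a trivial case). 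Suppose first $\xi:=a/|a|\notin\text{spec}(b)$. From $1-\overline{b(a)}\,b(z)=\big(1-\overline{b(\xi)}\,b(z)\big)+\big(\overline{b(\xi)}-\overline{b(a)}\big)b(z)$, the equality $|1-\overline{b(\xi)}\,b(z)|=|b(\xi)-b(z)|$ (again because $|b(\xi)|=1$), and (\ref{eq1.5}) applied at $\xi$ to the two points $z$ and $a$, one gets $|1-\overline{b(a)}\,b(z)|\le C|b'(\xi)|\,(|\xi-z|+1-|a|)$. Combining with $|1-\overline{a}\,z|\ge\max\{|\xi-z|-(1-|a|),\,1-|a|\}$ (note $|\xi-a|=1-|a|$) and splitting according to whether $|\xi-z|\ge 2(1-|a|)$ or not gives, in both cases,
\[
\sup_{z\in\mathbb{D}}\big|k^b_a(z)\big|\le 3C\,\big|b'(a/|a|)\big|=3C\,(1-|a|^2)\,\big|b'(a/|a|)\big|\cdot\frac{1}{1-|a|^2}.
\]
Hence, along sequences with $|b(a)|\to 1$ and $a/|a|\to\xi_0\notin\text{spec}(b)$, the boundedness of $|b'|$ near $\xi_0$ makes the right-hand side $o\big(1/(1-|a|^2)\big)$, which is what (\ref{eq1.4}) requires for such $a$.

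The main obstacle is the complementary case: sequences $a_n$ with $|b(a_n)|\to 1$ but $a_n/|a_n|\to\xi_0\in\text{spec}(b)$. This can genuinely occur, because $|b|$ may oscillate near a point of the spectrum, so (\ref{eq1.5}) cannot be invoked at $a_n/|a_n|$. I expect to handle it by a local analysis near $\text{spec}(b)$ combining: (i) the Schwarz--Pick inequality, which from $|b(a_n)|$ close to $1$ forces $|b|>C$ on a fixed pseudohyperbolic ball about $a_n$, hence places $a_n$ at a definite pseudohyperbolic distance from $\{z\in\mathbb{D}:|b(z)|\le C\}$; (ii) the inclusion $\text{spec}(b)\subseteq\overline{\{|b|\le C\}}$, used to select a point $\xi_n\in\partial\mathbb{D}\setminus\text{spec}(b)$ near $a_n$ whose approach region still contains $a_n$; and (iii) estimate (\ref{eq1.5}) at $\xi_n$ together with the Schwarz--Pick bound on $|b|$ over the ball from (i), to control $(1-|a_n|^2)|b'(\xi_n)|$ and again obtain $\sup_z|k^b_{a_n}(z)|=o\big(1/(1-|a_n|^2)\big)$. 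Putting the two cases together yields (\ref{eq1.4}), and part (b) of Theorem~\ref{theo1.3} then shows that $b$ is one-component.
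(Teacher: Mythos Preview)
Your argument for part (a) is correct and coincides with the paper's: apply Theorem~\ref{theo1.3}(a) at $a=r\xi$ and let $r\to 1^-$.

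For part (b) your route through (\ref{eq1.4}) and Theorem~\ref{theo1.3}(b) differs from the paper, which argues directly: for $\xi\notin\text{spec}(b)$ it sets $1-r(\xi)=C_1/|b'(\xi)|$, and from (\ref{eq1.5}) deduces successively that $1/|b'(\xi)|\lesssim\text{dist}(\xi,\text{spec}(b))$, that a second-order bound $|b(z)-b(\xi)-b'(\xi)(z-\xi)|\lesssim|b'(\xi)|^2|z-\xi|^2$ holds near $\xi$, that $|b(r(\xi)\xi)|\le C_5<1$, and that $|b|\ge C_6>0$ outside the region $\Omega$ bounded by the curve $\{r(\xi)\xi\}$ together with the full radii over $\text{spec}(b)$. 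The Maximum Principle on $\Omega$ then forces $|b|\le\max\{C,C_5\}$ there, and a standard inner-function argument rules out extra components of the sublevel set.

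Your Case~2 is a genuine gap, and the sketch does not close it. The concrete obstruction is this: if $a_n/|a_n|$ lies in (or tends to a point of) the relative interior of $\text{spec}(b)$, there is no reason for a point $\xi_n\in\partial\mathbb{D}\setminus\text{spec}(b)$ with $|\xi_n-a_n|\lesssim 1-|a_n|$ to exist, so step~(ii) of your sketch cannot be carried out, and without such $\xi_n$ neither (\ref{eq1.5}) nor Schwarz--Pick yields the needed control of $(1-|a_n|^2)\sup_z|k^b_{a_n}(z)|$. What is really required is to \emph{exclude} Case~2, i.e., to show that $|b(r\xi)|$ stays bounded away from $1$ whenever $\xi\in\text{spec}(b)$; but that is exactly what the paper's chain of claims plus the Maximum Principle on $\Omega$ establishes. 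Once you have it, Case~1 already covers everything---and at that point you have essentially reproduced the direct proof, so the detour through (\ref{eq1.4}) buys nothing. Put differently, under the spectrum inclusion, (\ref{eq1.4}) and one-componentness are equivalent by Theorem~\ref{theo1.3}, so your reduction is a reformulation rather than a simplification.
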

\noindent
Theorem \ref{theo1.4} is applied to obtain the following result.

\begin{theorem}\label{theo1.5}\
\begin{enumerate}
\renewcommand{\labelenumi}{\alph{enumi})}
\item Let $b$ be a one-component bounded function. Then the  function
\begin{equation}\label{eq1.6}
    H(\xi):=\begin{cases}1/b'(\xi)&\ \text{ if } \xi \notin \text{spec}(b)\\
    0 &\ \text{ if } \xi \in \text{spec}(b)
    \end{cases}
\end{equation}
is Lipschitz.
\item Let $b \in H^\infty(\mathbb{D})$ with $\|b \|_{\infty} = 1$. Assume there exists $0<C<1$ such that spec$(b)\subset \overline{\left\lbrace z \in \mathbb{D}: \ |b(z)|\leq C\right\rbrace}$. If the function $H$ defined in (\ref{eq1.6}) is Lipschitz, then $b$ is one-component.
\end{enumerate}
\end{theorem}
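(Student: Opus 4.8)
The plan is to deduce Theorem \ref{theo1.5} from Theorem \ref{theo1.4} by showing that, under the spectral inclusion (\ref{eq1.3}), the estimate (\ref{eq1.5}) holds if and only if the function $H$ of (\ref{eq1.6}) is Lipschitz. Two preliminary remarks will be used throughout. First, $b$ extends analytically across $\partial\mathbb{D}\setminus\mathrm{spec}(b)$, with $|b|=1$ and (for non-constant $b$) $b'\ne 0$ there, so $H$ is real-analytic on that open set, and writing $\xi=e^{it}$ one computes $\bigl|\tfrac{d}{dt}H(e^{it})\bigr|=|b''(\xi)|/|b'(\xi)|^2$. Second, since $H$ vanishes on $\mathrm{spec}(b)$, a routine gluing argument over the arcs composing $\partial\mathbb{D}\setminus\mathrm{spec}(b)$ (splitting a pair of points according to whether the shorter arc joining them meets $\mathrm{spec}(b)$) shows that $H$ is Lipschitz on $\partial\mathbb{D}$ as soon as $|b''|/|b'|^2$ is bounded on $\partial\mathbb{D}\setminus\mathrm{spec}(b)$ and $|H(\xi)|\lesssim\mathrm{dist}(\xi,\mathrm{spec}(b))$.

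For part b) I would argue as follows. If $H$ is $L$-Lipschitz then, since $H$ vanishes on the spectrum, $|b'(\xi)|\ge\bigl(L\,\mathrm{dist}(\xi,\mathrm{spec}(b))\bigr)^{-1}$ for $\xi\in\partial\mathbb{D}\setminus\mathrm{spec}(b)$. To verify (\ref{eq1.5}) at such a $\xi$, set $\rho=\bigl(2L|b'(\xi)|\bigr)^{-1}$; the previous bound forces $\rho\le\tfrac12\mathrm{dist}(\xi,\mathrm{spec}(b))$, so the closed arc $\overline{D(\xi,\rho)}\cap\partial\mathbb{D}$ avoids $\mathrm{spec}(b)$. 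Apply the maximum principle to $\varphi(w)=\bigl(b(w)-b(\xi)\bigr)\big/\bigl(b'(\xi)(w-\xi)\bigr)$ on the lens $D(\xi,\rho)\cap\mathbb{D}$ (it is continuous up to the boundary, with the removable value $\varphi(\xi)=1$): on the circular part $|\varphi|\le 2/(|b'(\xi)|\rho)=4L$, and on the boundary arc the Lipschitz bound on $H$ gives $|b'|\le 2|b'(\xi)|$ there, hence $|\varphi|\lesssim 1$ by the mean value theorem along $\partial\mathbb{D}$; for $z\in\mathbb{D}$ with $|z-\xi|\ge\rho$ one simply uses $|b(z)-b(\xi)|\le 2\le C_1|b'(\xi)|\rho$. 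This yields (\ref{eq1.5}), and Theorem \ref{theo1.4} b) then gives that $b$ is one-component. (If $\mathrm{spec}(b)=\varnothing$ then $b$ is a finite Blaschke product and the statement is classical.)

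For part a): by Theorem \ref{theo1.4} a) the estimate (\ref{eq1.5}) already holds with some constant $C$. Using condition a) of Definition \ref{def1.1}, choose $w\in\mathbb{D}$ with $|b(w)|\le C_0$ within distance $\mathrm{dist}(\xi,\mathrm{spec}(b))$ of a spectral point nearest to $\xi$; plugging $z=w$ into (\ref{eq1.5}) gives $1-C_0\le 2C\,\mathrm{dist}(\xi,\mathrm{spec}(b))\,|b'(\xi)|$, i.e. $|H(\xi)|\lesssim\mathrm{dist}(\xi,\mathrm{spec}(b))$. It remains to bound $|b''(\xi)|/|b'(\xi)|^2$. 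For this I would revisit $\varphi=\varphi_\xi$: (\ref{eq1.5}) says $|\varphi|\le C$ on $\mathbb{D}$, and $\varphi'(\xi)=b''(\xi)/\bigl(2b'(\xi)\bigr)$; Schwarz reflection across the arc, where $|b|=1$, gives $|\varphi(w)|=|\varphi(1/\bar w)|\big/\bigl(|w|\,|b(1/\bar w)|\bigr)$ for $w$ just outside $\mathbb{D}$ near $\xi$, so $\varphi$ extends analytically and stays bounded by a fixed multiple of $C$ on a two-sided disc $D(\xi,c|H(\xi)|)$ provided $|b|\ge\tfrac12$ on $D(\xi,c|H(\xi)|)\cap\mathbb{D}$; the poles $1/\bar z_n$ of $\varphi$ coming from the zeros $z_n$ of $b$ cause no trouble since $|1/\bar z_n-\xi|\ge|z_n-\xi|\ge c|H(\xi)|$ once $|b|\ge\tfrac12$ nearby. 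A Cauchy estimate then gives $|\varphi'(\xi)|\lesssim 1/|H(\xi)|=|b'(\xi)|$, hence $|b''(\xi)|\lesssim|b'(\xi)|^2$, and $H$ is Lipschitz by the reduction above.

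The main obstacle is the lower bound $|b|\ge\tfrac12$ on $D(\xi,c|H(\xi)|)\cap\mathbb{D}$, uniformly in $\xi\in\partial\mathbb{D}\setminus\mathrm{spec}(b)$ — equivalently, the fact that the radius of analytic continuation of $\varphi_\xi$ about $\xi$ is comparable to $|H(\xi)|=1/|b'(\xi)|$. For points $v$ approaching $\xi$ nontangentially this follows from Julia's lemma, which gives $1-|b(v)|\lesssim|b'(\xi)|\,|v-\xi|$; the genuinely delicate case is $v$ tangential to $\partial\mathbb{D}$ near $\xi$, where one expects to need the finer structure of one-component bounded functions — Theorem \ref{theo1.1} together with the quantitative analytic continuation available for this family.
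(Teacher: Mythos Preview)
Your part b) is correct and in fact cleaner than the paper's argument. The paper does \emph{not} reduce to Theorem \ref{theo1.4} b); instead it re-runs a direct construction (defining $r(\xi)$ by $1-r(\xi)=c/|b'(\xi)|$, proving $|b(r(\xi)\xi)|\le C_2<1$ and $|b|\ge 1/2$ on the complementary region, and then showing connectedness by the usual inner-function contradiction). Your route---extracting (\ref{eq1.5}) from the Lipschitz bound via the maximum principle on the lens, then quoting Theorem \ref{theo1.4} b)---is shorter and reuses work already done.

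For part a), however, the ``main obstacle'' you identify is not an obstacle at all, and you are making the argument harder than it needs to be. Estimate (\ref{eq1.5}), which you already have from Theorem \ref{theo1.4} a), gives directly
\[
|b(v)|\ \ge\ |b(\xi)|-|b(v)-b(\xi)|\ \ge\ 1-C\,|b'(\xi)|\,|v-\xi|,\qquad v\in\mathbb{D},
\]
so $|b(v)|\ge 1/2$ for every $v\in D\bigl(\xi,\tfrac{1}{2C}|H(\xi)|\bigr)\cap\mathbb{D}$, tangential or not; no Julia lemma, no appeal to Theorem \ref{theo1.1} is needed. With this in hand your reflection/Cauchy-estimate argument goes through exactly as you wrote it. The paper packages the same idea slightly differently: Claim \ref{cl4.2} in the proof of Theorem \ref{theo1.4} uses the maximum principle on the two-sided disc $\{|z-\xi|\le C_3/|b'(\xi)|\}$ (which avoids $\mathrm{spec}(b)$ by Claim \ref{cl4.1}) to get the Taylor-remainder bound
\[
|b(z)-b(\xi)-b'(\xi)(z-\xi)|\ \le\ C_4\,|z-\xi|^2\,|b'(\xi)|^2,
\]
and then divides by $|z-\xi|^2$ and lets $z\to\xi$ to obtain $|b''(\xi)|\le 2C_4|b'(\xi)|^2$ in one line. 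Together with $|b'(\xi)|\to\infty$ as $\mathrm{dist}(\xi,\mathrm{spec}(b))\to 0$ (your bound $|H(\xi)|\lesssim\mathrm{dist}(\xi,\mathrm{spec}(b))$, or Corollary \ref{cor4.1} in the paper), this gives the Lipschitz property of $H$. So the two proofs of a) are essentially the same maximum-principle argument; the paper's formulation just avoids the detour through explicit Schwarz reflection.
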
\noindent
Theorem \ref{theo1.5} is also equivalent to the following corollary. 

\begin{corollary}\label{cor1.6}
Let $b \in H^\infty(\mathbb{D})$ with $\|b \|_{\infty} = 1$. Then  $b$ is one-component if and only if the following three conditions hold: 
\begin{enumerate}
\renewcommand{\labelenumi}{\alph{enumi})}
\item There exists a constant $0<C<1$ such that 
$
\text{spec}(b)\subset \overline{ \left\lbrace z \in \mathbb{D}: \ |b(z)|\leq C\right\rbrace}\ .
$
\item There exists a constant $C_1 >0$ such that  $|b''(\xi)|\leq C_1 |b'(\xi)|^2$, for every  $\xi \in \partial \mathbb{D}\setminus $ spec$(b)$
\item $|b'(\xi)|\to \infty$, as $0<$dist$(\xi, $spec$(b))\to 0$.
\end{enumerate}
\end{corollary}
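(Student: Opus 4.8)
The plan is to deduce the statement from Theorem~\ref{theo1.5}, which already asserts that $b$ is one-component if and only if condition~(a) holds and the function $H$ of~(\ref{eq1.6}) is Lipschitz on $\partial\mathbb{D}$. It therefore suffices to prove that, \emph{granted condition~(a)}---which in particular ensures, as recalled above (see \cite{FM}; note that $|b|\to1$ at every point of $\partial\mathbb{D}\setminus\text{spec}(b)$), that $b$ extends analytically across $\partial\mathbb{D}\setminus\text{spec}(b)$, so that $b'$ and $b''$ are defined there---the property ``$H$ is Lipschitz'' is equivalent to the conjunction of~(b) and~(c). We may assume $b$ is non-constant, the unimodular-constant case being trivial.

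Assume first that $H$ is Lipschitz on $\partial\mathbb{D}$, with constant $L$. Then $H$ is bounded, so $b'$ is zero-free on $\partial\mathbb{D}\setminus\text{spec}(b)$ and $H=1/b'$ is real-analytic there. Writing $\xi=e^{i\theta}$ one computes $\tfrac{d}{d\theta}H(e^{i\theta})=-ie^{i\theta}\,b''(e^{i\theta})/b'(e^{i\theta})^{2}$, so the arc-length derivative of $H$ on $\partial\mathbb{D}\setminus\text{spec}(b)$ has modulus $|b''(\xi)|/|b'(\xi)|^{2}$, which the Lipschitz bound forces to be $\le L$; this is precisely~(b) with $C_{1}=L$. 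For~(c), fix $\xi\notin\text{spec}(b)$ and $\eta\in\text{spec}(b)$: since $H(\eta)=0$ we have $|b'(\xi)|^{-1}=|H(\xi)-H(\eta)|\le L|\xi-\eta|$, and taking the infimum over $\eta\in\text{spec}(b)$ gives $|b'(\xi)|\ge\bigl(L\,\mathrm{dist}(\xi,\text{spec}(b))\bigr)^{-1}$, which tends to $\infty$ as $0<\mathrm{dist}(\xi,\text{spec}(b))\to0$.

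Conversely, assume~(a), (b), (c); by Theorem~\ref{theo1.5}(b) it is enough to show that $H$ is Lipschitz. First, (b) excludes boundary zeros of $b'$: if $b'$ vanished to finite order $k\ge1$ at some $\xi_{0}\in\partial\mathbb{D}\setminus\text{spec}(b)$, then near $\xi_{0}$ we would have $|b''(z)|\asymp|z-\xi_{0}|^{k-1}$ and $|b'(z)|^{2}\asymp|z-\xi_{0}|^{2k}$, whence $|b''|/|b'|^{2}\to\infty$ along $\partial\mathbb{D}$, contradicting~(b). So $H=1/b'$ is $C^{1}$ on $\partial\mathbb{D}\setminus\text{spec}(b)$, with arc-length derivative of modulus $|b''|/|b'|^{2}\le C_{1}$ there; moreover~(c) gives $|b'(\xi)|\to\infty$, hence $H(\xi)\to0$, as $\xi$ approaches $\text{spec}(b)$, so $H$ is continuous on $\partial\mathbb{D}$ and vanishes on $\text{spec}(b)$. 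Thus $H$ is $C_{1}$-Lipschitz in arc length on the closure of each component arc of $\partial\mathbb{D}\setminus\text{spec}(b)$, and in particular $|H(\xi)|\le C_{1}\,\ell(\xi)$, where $\ell(\xi)$ is the arc-length distance from $\xi$ to $\text{spec}(b)$. A global bound now follows geometrically: given $\xi,\eta\in\partial\mathbb{D}$, let $A$ be the shorter arc joining them; if $A$ avoids $\text{spec}(b)$ it lies in one component arc and $|H(\xi)-H(\eta)|\le C_{1}|A|$, while if $\zeta\in A\cap\text{spec}(b)$ then $|H(\xi)-H(\eta)|\le|H(\xi)|+|H(\eta)|\le C_{1}(\ell(\xi)+\ell(\eta))\le C_{1}|A|$; either way $|H(\xi)-H(\eta)|\le C_{1}|A|\le\tfrac{\pi C_{1}}{2}|\xi-\eta|$, so $H$ is Lipschitz. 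The only genuinely delicate points here are the exclusion of boundary zeros of $b'$ via~(b) (needed both to define $H$ and to run the arc-length computation) and the bookkeeping between chordal and arc-length distances in the last step; the remainder is a direct translation through Theorem~\ref{theo1.5}.
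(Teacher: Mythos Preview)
Your proof is correct and follows the same strategy as the paper's: reduce to Theorem~\ref{theo1.5} and show that, under condition~(a), the Lipschitz property of $H$ is equivalent to (b)$+$(c). Your argument is in fact more careful than the paper's terse version---where the paper simply invokes ``bounded derivative a.e.'' and Rademacher's theorem, you make the arc-by-arc Lipschitz estimate and the chordal/arc-length bookkeeping explicit, and you give an independent reason (via (b)) for $b'$ to be zero-free on $\partial\mathbb{D}\setminus\text{spec}(b)$, though this last point is also available directly from Lemma~\ref{lem2.6}.
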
\noindent
These last three results are similar to their analogue versions for one-component inner functions, presented in \cite{A1} and \cite{A3}. However, we point out that our proofs are different. Indeed, unlike \cite{A3}, we do not make use of the explicit expression of the norms of the reproducing kernels in de Branges-Rovnyak spaces.

\vspace{11 pt}\noindent
We describe also one-component bounded functions in terms of their Clark measures. Clark measures of one-component inner functions were described in \cite{Be} by R.V. Bessonov. In the following, we give the corresponding result for one-component bounded functions.
Before stating the result, we recall the definition and main properties of Clark measures. For a more complete description we refer to \cite{PS}, \cite{S} and \cite{CMR}. 

Let $b \in H^\infty(\mathbb{D})$ with $\|b \|_{\infty} = 1$ and $|\alpha|=1$. Since $(\alpha + b) / (\alpha - b)$ is an analytic function in $\D$ having positive real part, there exists a unique positive measure $\mu_\alpha$ in the unit circle such that
\begin{equation}\label{eq1.7}
    \frac{\alpha+b(z)}{\alpha-b(z)}=\int_{\partial \mathbb{D}} \frac{\xi+z}{\xi-z}d\mu_\alpha(\xi) +2i\frac{\Im\left( \bar{\alpha}b(0)\right)}{|\alpha-b(0)|^2}\ , \quad z \in \D. 
\end{equation}
The measure $\mu_\alpha$ is called the Clark measure of the function $b$ at the value $\alpha$. 

We note that if $b(0)=0$ the measure $\mu_\alpha$ is a probability measure. It follows from (\ref{eq1.7}) that the real part of $(\alpha+b(z))/(\alpha-b(z))$ is the Poisson integral of the measure $\mu_\alpha$. The measure  $\mu_\alpha$ is singular if and only if the function $b$ is inner. In this case,  the measure $\mu_\alpha$ is carried by $\left\lbrace \xi \in \partial \mathbb{D}: \ b(\xi)=\alpha\right\rbrace$. 

For every positive, Borel measure $\mu$ on the unit circle $\partial\mathbb{D}$ we denote by $a(\mu)$ the set of all its isolated atoms. The set $P(\mu):=$ spt$(\mu )\setminus a(\mu)$ consists of all the accumulation points in the support of $\mu$, denoted by spt$(\mu)$. We will say that an atom $\xi \in a(\mu)$ has two neighbours if there is an open arc $(\xi_-,\xi_+)$ on the unit circle $\partial\mathbb{D}$ with endpoints $\xi_\pm \in a(\mu)$ such that $(\xi_{-} , \xi_+) \cap $ spt$(\mu)=\{\xi\}$.

\begin{theorem}\label{theo1.6}
Let $\mu$ be a positive, Borel measure on $\partial \mathbb{D}$.
The following two condition are equivalent:
\begin{enumerate}
\renewcommand{\labelenumi}{\alph{enumi})}
\item$\mu$ is the Clark measure of a one-component bounded function.
\item $\mu=\mu_a+\mu_s$ and there exists a constant $C=C(\mu) >0$ such that 
\begin{enumerate} [label=\roman{*}., ref=(\roman{*})]
    \item The measure $\mu_a$ is absolutely continuous and   
    $\mu_a(\xi)=g(\xi)dm(\xi)$, with $1/C\leq g(\xi)\leq C$ for $\mu_a$-almost all $\xi \in \partial \D$.
    \item The measure $\mu_s$ is purely atomic and every atom has two neighbours. Moreover, there are infinitely many atoms in any connected component $I$ of $\partial \mathbb{D}\setminus P(\mu)$, which accumulate to both  boundary points of $I$. Finally, for any $\xi \in a(\mu)$ we have
    $$
    C^{-1}|\xi-\xi_\pm|\leq \mu \{\xi\} \leq C|\xi-\xi_\pm| \ .
    $$
    \item We have 
    $$
    H^*(\mu)(\xi):=\limsup_{\epsilon\to 0} \bigg| \int_{\{t \in \partial \D: |t-\xi|>\epsilon \} }\frac{d\mu(t) }{1-\bar{t}\xi}\bigg|\leq C
    $$
    for any $\xi \in$ spt$(\mu_a)\cup a(\mu)$.
 \end{enumerate}
\end{enumerate}

\end{theorem}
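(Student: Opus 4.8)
The plan is to use the Herglotz representation (\ref{eq1.7}) to translate freely between $b$ and its Clark measure, combined with the geometric criterion of Theorem~\ref{theo1.1} and the boundary estimates of Theorems~\ref{theo1.4}, \ref{theo1.5} and Corollary~\ref{cor1.6}. After replacing $b$ by $\bar\alpha b$ we may assume $\alpha=1$, so that $\mu=\mu_1$ and $\operatorname{Re}\frac{1+b}{1-b}=P[\mu]$; the degenerate case $\operatorname{spec}(b)=\emptyset$, in which $b$ is a finite Blaschke product, is understood separately. The engine of the argument is the following dichotomy for a one-component $b$, with $C_1$ as in Theorem~\ref{theo1.1}: at every $\xi\in\partial\mathbb{D}$ where the radial limit exists, either $|b(\xi)|\le C_1$ or $|b(\xi)|=1$, and in the second case $b$ is analytic with $|b|\equiv1$ near $\xi$ on $\partial\mathbb{D}$ and $|b|\to1$ as $z\to\xi$ in $\mathbb{D}$, so $\xi\notin\operatorname{spec}(b)$. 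Indeed, $|b(\xi)|>C_1$ forces $|b((1-2^{-n})\xi)|>C_1$ for large $n$, hence $\sigma_b(Q((1-2^{-n})\xi))=0$ by (\ref{eq1.1}); since $\log|b|^{-1}\,dm\le d\sigma_b$ and these boxes shrink to $\xi$, $\sigma_b$ carries no mass near $\xi$ in $\overline{\mathbb{D}}$, which gives the claim.

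For (a)$\Rightarrow$(b): write $\mu=\mu_a+\mu_s$ with $\mu_a=g\,dm$, $g=\frac{1-|b|^2}{|1-b|^2}$ a.e. On $\{g>0\}$ the dichotomy gives $|b(\xi)|\le C_1$, whence $\tfrac{1-C_1}{1+C_1}\le g(\xi)\le(1-C_1)^{-2}$, proving (i). The singular part $\mu_s$ is carried, by the theory of Clark measures (\cite{PS}, \cite{CMR}), by the set where the nontangential limit of $b$ equals $1$; by the dichotomy this set lies in $\partial\mathbb{D}\setminus\operatorname{spec}(b)$, where $b$ is analytic and $\{b=1\}$ is discrete, so $\mu_s$ is purely atomic with $\mu\{\xi\}$ a fixed multiple of $1/|b'(\xi)|$ at each atom. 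On each component $I$ of $\partial\mathbb{D}\setminus P(\mu)$ one has $|b|\equiv1$ (these are exactly the components of $\partial\mathbb{D}\setminus\operatorname{spec}(b)$ on which $|b|$ is not $\le C_1$), so $b|_I$ is inner; since $|b'|\to\infty$ at $\partial I$ (Corollary~\ref{cor1.6}(c)) while $|b''|\le C_1|b'|^2$ (Corollary~\ref{cor1.6}(b)) forces $b'\neq0$ on $I$, the map $b|_I$ is a local diffeomorphism winding infinitely often near each endpoint of $I$, so its $1$-preimages — the atoms — accumulate to both endpoints, and between consecutive atoms $b$ covers $\partial\mathbb{D}$ exactly once, with $|b|=1$ and $b\neq1$ there, so the support of $\mu$ in between is empty. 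The same control of $|b'|$ over one period gives $\mu\{\xi\}\asymp|\xi-\xi_\pm|$, finishing (ii). For (iii), rewrite (\ref{eq1.7}) as $\int\frac{d\mu(t)}{1-\bar t z}=\tfrac12\big(\tfrac{1+b(z)}{1-b(z)}+\|\mu\|-ic\big)$: for $\xi\in\operatorname{spt}(\mu_a)$ the right side stays bounded as $z\to\xi$ because $|b(\xi)|\le C_1$; for an atom $\xi$ the symmetric truncation defining $H^*(\mu)(\xi)$ removes exactly the pole $\mu\{\xi\}/(1-\bar\xi z)$, the neighbouring atoms essentially cancelling thanks to the neighbour relation, and the remainder is bounded by $|b''(\xi)|/|b'(\xi)|^2+O(1)\le C_1+O(1)$ by Corollary~\ref{cor1.6}(b).

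For (b)$\Rightarrow$(a): put $F(z)=\int\frac{t+z}{t-z}\,d\mu(t)$ and $b=\frac{F-1}{F+1}$, so $\operatorname{Re}F=P[\mu]\ge0$ gives $\|b\|_\infty\le1$, $\mu$ is the Clark measure of $b$ at $1$, and $\|b\|_\infty=1$ since $P[\mu]$ is not bounded below. On a component $I$ of $\partial\mathbb{D}\setminus P(\mu)$ the density of $\mu$ vanishes and $F$ extends analytically across $I$ with simple poles at the atoms, so $b$ is analytic on $I$ with $|b|\equiv1$; hence $\liminf_{z\to\xi}|b(z)|=1$ for $\xi\in I$. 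On $\operatorname{spt}(\mu_a)$ one has $\operatorname{Re}F=g\asymp1$ while $|\operatorname{Im}F|$ is controlled by $H^*(\mu)\le C$ from (iii), so $|b|\le C'<1$; and at an accumulation point of atoms the estimate $\mu\{\xi_j\}\asymp|\xi_j-\xi_{j\pm1}|$ produces points $z\to\xi$ with $F(z)\asymp1$, so $|b(z)|\le C''<1$. Thus $\operatorname{spec}(b)=P(\mu)\subseteq\overline{\{|b|\le\max(C',C'')\}}$, and it remains to verify the derivative estimate (\ref{eq1.5}) on $\partial\mathbb{D}\setminus\operatorname{spec}(b)$, equivalently the hypotheses of Corollary~\ref{cor1.6}, from (i)--(iii); this is carried out by adapting the geometric arguments of \cite{Be} to the present situation, where the atoms organise the sublevel sets of $b$ into a connected chain to which the elliptic block $\mu_a$ is attached. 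By Theorem~\ref{theo1.4}(b) it then follows that $b$ is one-component.

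The step I expect to be the main obstacle is the behaviour near $\operatorname{spec}(b)=P(\mu)$. In the forward direction it is the two-sided comparison $\mu\{\xi\}\asymp|\xi-\xi_\pm|$ for consecutive atoms, which requires combining $|b'|\to\infty$, $|b''|\lesssim|b'|^2$ and the absence of critical points to control the multiplicity with which $b$ covers $\partial\mathbb{D}$ between consecutive solutions of $b=1$. In the converse direction it is the passage from the measure-theoretic conditions (i)--(iii) to the connectedness of a sublevel set $\{|b|<C\}$ (equivalently to (\ref{eq1.5})): one must show that the tents over the arcs carrying the atoms and the arcs carrying $\mu_a$ glue into a single connected component of $\{|b|<C\}$, and here the bound $H^*(\mu)\le C$ is precisely what keeps $|b|$ off the unit circle on $\operatorname{spt}(\mu_a)$ and prevents the atoms from accumulating irregularly.
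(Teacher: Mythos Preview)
Your plan follows the same route as the paper: identify $P(\mu)=\operatorname{spec}(b)$ (the paper's Lemma~\ref{lem5.1}), use the boundary dichotomy coming from Theorem~\ref{theo1.1} and Lemma~\ref{lem2.4} for (a)$\Rightarrow$(b), and verify the hypotheses of Corollary~\ref{cor1.6} for (b)$\Rightarrow$(a). Your ``dichotomy'' is a clean repackaging of Theorem~\ref{theo1.1} plus Lemma~\ref{lem2.4}, and it gives (i) and the qualitative part of (ii) essentially as in the paper. Two points, however, are genuinely incomplete, and they are exactly the ones you flag.

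\emph{The lower bound $|\xi-\xi_\pm|\lesssim\mu\{\xi\}$ in (ii).} The upper bound is immediate from (\ref{eq1.5}) by picking $t\in(\xi,\xi_+)$ with $b(t)=-1$. But ``the same control of $|b'|$ over one period'' does not give the lower bound: you need to rule out that $|b'|$ becomes much larger than $|b'(\xi)|$ somewhere on $(\xi,\xi_+)$. The paper uses an argument of Baranov--Dyakonov: split $(\xi,\xi_+)$ into $N$ sub-arcs of equal $\operatorname{Arg}b$-increment $2\pi/N$; on each the minimum of $|b'|$ controls the length, and since $\sigma_b(2Q)=0$ near this arc (by Theorem~\ref{theo1.1}) one gets $|b'(t)|\le 4\min|b'|$ on each sub-arc, whence $|\xi_+-\xi|\lesssim N/|b'(\xi)|=N\mu\{\xi\}$. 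Your estimate $|b''|\le C|b'|^2$ alone does not deliver this comparison over a full period.

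\emph{The passage from (b.i)--(b.iii) to Corollary~\ref{cor1.6}.} Deferring to ``adapting \cite{Be}'' hides the crucial step. The paper isolates a local lemma (Lemma~\ref{lem5.2}): for each atom $\xi_0$ and $z$ in the disc $D_{\xi_0}(k)=\{|z-\xi_0|<k\mu(\xi_0)\}$ with $k$ small, one has
\[
\Big|\frac{1-b(z)}{\xi_0-z}\Big|\ \asymp\ \frac{1}{\mu(\xi_0)}.
\]
This is obtained by separating the term $\mu(\xi_0)/(1-\bar\xi_0 z)$ in the Cauchy integral (\ref{eq5.3}) and bounding the remainder using (b.ii) (so that $|t-z|\ge\tfrac12|t-\xi_0|$ for $t\in\operatorname{spt}(\mu)\setminus\{\xi_0\}$) together with (b.iii). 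From it one reads off $|b'|\asymp 1/\mu(\xi_0)$ and $|b''|\lesssim 1/\mu(\xi_0)^2$ on $D_{\xi_0}(k)$, hence $|b''|\lesssim|b'|^2$ there; points of $\partial\mathbb{D}\setminus\operatorname{spec}(b)$ outside $\bigcup D_{\xi_0}(k)$ lie between neighbouring atoms at distance $\asymp\mu(\xi_\pm)$, and the same estimates transfer. This local lemma is what makes the verification of Corollary~\ref{cor1.6}(b),(c) go through; without it the ``gluing into a single connected component'' you describe is not established. Note also that the paper needs a separate estimate (Lemma~\ref{lem5.2nou}, comparing the Cauchy integral at $r\xi$ with the truncated one at $\xi$) to turn the hypothesis $H^*(\mu)(\xi)\le C$ on $\operatorname{spt}(\mu_a)$ into the bound $\limsup_{r\to1}|b(r\xi)|\le C'<1$ that you assert.
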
\noindent
\noindent
 It is worth mentioning that the conditions $C^{-1}\leq g(\xi)\leq C$ for $\mu_a$-almost every point $\xi \in \partial \D$ and $C^{-1}|\xi-\xi_\pm|\leq \mu(\xi)\leq C|\xi-\xi_\pm|, \,   \xi \in a(\mu), $ can be rephrased in the following way: There exists a constant $C_1>0$ such that $C_1^{-1}|I|\leq \mu(I)\leq C_1|I|$ for every arc $I \subset \partial \mathbb{D}$ containing at least two different points of spt$(\mu)$. It is also interesting to mention that in the statement of Theorem 1.7, condition \emph{b.iii)} can be replaced by the apparently stronger condition that $H^* (\mu) (\xi) \leq C$ at any point $\xi \in \text{spt}(\mu)$.

This paper is divided in five short sections. Next section contains some auxiliary results. Section 3 is devoted to the proofs of Theorems \ref{theo1.1}, \ref{theo1.2} and some consequences. Theorems \ref{theo1.3}, \ref{theo1.4} and \ref{theo1.5} are proved in Section 4. Last Section contains the proof of Theorem \ref{theo1.6}.

\newpage \noindent
\section{Preliminary results}\label{sec2}
In this section, we collect some auxiliary results that will be used in the rest of the paper. 

\vspace{11 pt}\noindent
We first give some comments about Definition \ref{def1.1}. As already mentioned, the main difference between one-component bounded functions and one-component inner functions is Condition \emph{a)} in Definition 1. Our first auxiliary result says that Condition \emph{a)} allows us to choose freely the level $C$ of the connected sublevel set.
\begin{lemma}\label{lem2.1}
Let $b \in H^\infty (\mathbb{D})$ with $\|b \|_\infty = 1$. Assume there exists a constant $0< C < 1$ such that $\Omega_C:=\{ z \in \mathbb{D}: |b(z)|<C\}$ is connected and spec$(b)\subset \overline{\Omega_C}$. Then  $\Omega_{C_1}:=\{ z \in \mathbb{D}: |b(z)|<{C_1}\}$ is connected for every $C_1>C$.
\end{lemma}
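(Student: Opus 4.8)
The plan is to fix $C_1 > C$ and show $\Omega_{C_1} = \{z : |b(z)| < C_1\}$ is connected, given that $\Omega_C$ is connected and $\operatorname{spec}(b) \subseteq \overline{\Omega_C}$. The natural strategy is to prove that every connected component of $\Omega_{C_1}$ must intersect $\Omega_C$; since $\Omega_C \subseteq \Omega_{C_1}$ is itself connected, all components meeting it coincide, giving $\Omega_{C_1} = \Omega_C \cup (\text{components meeting } \Omega_C) = $ a single connected set. So the crux is: \emph{no component of $\Omega_{C_1}$ is disjoint from $\Omega_C$.} Suppose toward a contradiction that $U$ is a component of $\Omega_{C_1}$ with $U \cap \Omega_C = \emptyset$, i.e. $C \le |b(z)| < C_1$ for all $z \in U$.

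First I would argue that such a $U$ cannot touch the unit circle in an essential way. On the topological boundary $\partial U$ taken inside $\D$, we have $|b| = C_1$ by continuity and maximality of the component; the part of $\partial U$ on $\partial\D$ would have to lie in a region where $|b|$ does not drop below $C_1$, so in particular $\overline{U} \cap \partial \D$ is disjoint from $\operatorname{spec}(b)$ (points where $\liminf |b| < 1$ — but more to the point, we need it disjoint from $\overline{\Omega_C}$, hence from $\operatorname{spec}(b)$ by hypothesis, away from which $b$ extends analytically with $|b|=1$ on the circle). Actually the cleaner route: on $\overline{U} \cap \partial\D$, since $U$ avoids $\Omega_C$, this boundary arc is disjoint from $\operatorname{spec}(b)$; but then near that arc $|b|$ has radial limits of modulus $1 > C_1$, contradicting that points of $U$ arbitrarily close to the arc satisfy $|b| < C_1$ — unless $\overline{U} \cap \partial\D = \emptyset$. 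Hence $\overline U \subset \D$ is a compact subset of the disc.

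Now apply the minimum modulus principle style argument: on the compact set $\overline U \subset \D$, $\log|b|$ is superharmonic (as $b$ is analytic and, shrinking if necessary, we may assume $b$ is zero-free on $\overline U$ because $|b| \ge C > 0$ there), so $|b|$ attains its minimum on $\partial U$, where $|b| = C_1$. But then $|b| \ge C_1$ throughout $U$, contradicting $U \subseteq \Omega_{C_1} = \{|b| < C_1\}$, which forces $U = \emptyset$. This contradiction shows every nonempty component of $\Omega_{C_1}$ meets $\Omega_C$, and since $\Omega_C$ is connected and contained in each such component's ambient set, all components coincide with the one containing $\Omega_C$; therefore $\Omega_{C_1}$ is connected.

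The main obstacle, I expect, is making the boundary analysis rigorous: precisely ruling out that $\overline U$ touches $\partial\D$, handling the possibility that $b$ has zeros inside $\D$ (using $|b| \ge C$ on $U$ to get zero-freeness there is the key escape), and correctly invoking the minimum principle for $|b|$ on a region whose boundary may be irregular. One must be careful that $\partial U$ inside $\D$ genuinely has $|b| = C_1$ everywhere — this follows because a boundary point $z_0 \in \D$ of $U$ with $|b(z_0)| < C_1$ would have a neighborhood in $\Omega_{C_1}$, and connectedness of that neighborhood would force $z_0 \in U$, a contradiction. With that pinned down, the superharmonicity/minimum-modulus step is routine.
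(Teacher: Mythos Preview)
Your overall strategy is sound and matches the paper's: argue by contradiction that a component $U$ of $\Omega_{C_1}$ disjoint from $\Omega_C$ cannot exist. The problem is the step where you conclude $\overline{U}\cap\partial\D=\emptyset$. Your reasoning there is inverted. You write that ``since $U$ avoids $\Omega_C$, this boundary arc is disjoint from $\operatorname{spec}(b)$'', but in fact the opposite holds: if $\xi\in\overline{U}\cap\partial\D$, then there exist $z_n\in U$ with $z_n\to\xi$ and $|b(z_n)|<C_1<1$, so $\liminf_{z\to\xi}|b(z)|<1$, which means $\xi\in\operatorname{spec}(b)$. Thus $\overline{U}\cap\partial\D\subseteq\operatorname{spec}(b)\subseteq\overline{\Omega_C}$, and nothing you have written rules out $\overline U$ touching $\partial\D$ at points of $\operatorname{spec}(b)$. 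The hypotheses $U\cap\Omega_C=\emptyset$ and $\operatorname{spec}(b)\subseteq\overline{\Omega_C}$ do not combine to give $\overline U\cap\operatorname{spec}(b)=\emptyset$: two disjoint open sets can share boundary points. Without $\overline U\subset\D$, your minimum-modulus argument has no compact set to work on and collapses.

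The paper circumvents this boundary difficulty altogether. It observes that $U$ is simply connected (any Jordan curve in $U\subset\{|b|<C_1\}$ bounds, by the maximum principle, a disk on which $|b|<C_1$, hence contained in $U$), takes a Riemann map $\phi:\D\to U$, and notes that $b\circ\phi/C_1$ is an inner function bounded below in modulus by $C/C_1>0$. A zero-free inner function bounded away from zero is a unimodular constant, contradicting $|b\circ\phi|<C_1$. This argument does not require knowing whether $\overline U$ meets $\partial\D$; the inner-function machinery absorbs the boundary behaviour. If you want to salvage your approach, you would need an independent argument that $\partial U\cap\partial\D$ has harmonic measure zero in $U$ (so that the boundary values of $b\circ\phi$ are $C_1$ in modulus a.e.), which is essentially what the paper's inner-function step encodes.
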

\begin{proof}
We argue by contradiction and we assume that there exists $C<C_1 < 1$ such that $\Omega_{C_1}$ is not connected. Note that $\Omega_{C} \subset \Omega_{C_1}$. Let $\Omega_1$ be a connected component of $\Omega_{C_1}$ such that $\Omega_1 \cap \Omega_C=\emptyset$. Note that $\Omega_1$ is simply connected. Let $\phi : \mathbb{D}\to \Omega_1$ be a conformal map. Then the function 
$ U= b\circ \phi / C_1$ is inner. Since $\inf_{z \in \mathbb{D}}|U(z)|>\delta>0$, the function $U$ has to be constant, which is impossible. This contradiction finishes the proof. 
\end{proof}\noindent
There are some differences between the spectrum of an inner function and the spectrum of a bounded analytic function. Let us consider a bounded function $b$, which, according to its inner-outer factorization, can be written as $b= B S_\mu O_b $. Therefore, as stated in \cite{BFM},  $\overline{\text{spec}(b)}$ is the smallest closed subset of $\partial \mathbb{D}$ containing the limit points of the zeros of the Blaschke product $B$ and the supports of the measures $\mu$ and $\log|b|^{-1}dm$. It is well known and easy to prove that $b$ has an analytic extension and it is unimodular through any arc of the open set $\partial\mathbb{D}\setminus \overline{\text{spec}(b)}$, see \cite{FM}. 
Note that there are $b \in  H^\infty (\mathbb{D})$ whose spectrum is not closed. 

\begin{lemma}\label{lem2.2}
Let $b \in  H^\infty (\mathbb{D})$ with $\|b\|_{\infty} = 1$. Assume there exists a constant $0<C<1$ such that $\text{spec}(b) \subset \overline{ \{z \in D : |b(z)| \leq C\} } $. Then, spec$(b)$ is closed.
\end{lemma}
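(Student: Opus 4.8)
We must show that under the hypothesis $\operatorname{spec}(b)\subset\overline{\{z\in\mathbb D:|b(z)|\le C\}}$, the set $\operatorname{spec}(b)$ is closed. Recall that in general $\operatorname{spec}(b)$ need not be closed: the issue is that a point $\xi$ may be a limit of points $\xi_n\in\operatorname{spec}(b)$ without itself satisfying $\liminf_{z\to\xi}|b(z)|<1$. So the whole content is: the hypothesis forbids this.

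The plan is as follows. Let $\xi_n\in\operatorname{spec}(b)$ with $\xi_n\to\xi\in\partial\mathbb D$; we want $\xi\in\operatorname{spec}(b)$, i.e. $\liminf_{z\to\xi}|b(z)|<1$. By hypothesis, for each $n$ there is a point $w_n\in\mathbb D$ with $|b(w_n)|\le C$ and $|w_n-\xi_n|$ as small as we like; in particular we may choose $w_n$ with $|w_n-\xi_n|\to 0$, hence $w_n\to\xi$ as well. But then $w_n$ is a sequence of points in $\mathbb D$ converging to $\xi$ along which $|b(w_n)|\le C<1$, so $\liminf_{z\to\xi}|b(z)|\le C<1$, which is exactly the statement that $\xi\in\operatorname{spec}(b)$. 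Thus $\operatorname{spec}(b)$ contains all its limit points and is closed.

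**The one point needing care.** The subtlety is in the first step: from $\xi_n\in\operatorname{spec}(b)$ and $\xi_n\in\overline{\{|b|\le C\}}$ we only get points in $\overline{\{|b|\le C\}}$ near $\xi_n$, which a priori could lie on $\partial\mathbb D$ rather than in $\mathbb D$. Here one uses that $\operatorname{spec}(b)\subset\overline{\{z\in\mathbb D:|b(z)|\le C\}}$, the closure of a subset of the open disc, so $\xi_n$ is genuinely a limit of points $w\in\mathbb D$ with $|b(w)|\le C$ — points strictly inside the disc. Choosing $w_n\in\mathbb D$ with $|b(w_n)|\le C$ and $|w_n-\xi_n|<1/n$ is then legitimate, and the triangle inequality gives $|w_n-\xi|\le|w_n-\xi_n|+|\xi_n-\xi|\to 0$. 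I expect this to be the only place where the hypothesis is actually used; the rest is immediate from the definition of $\liminf$ along a convergent sequence. No appeal to Lemma~\ref{lem2.1} or to the factorization is needed, though of course the lemma shows the choice of $C$ is immaterial.
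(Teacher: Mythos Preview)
Your proof is correct and is essentially the same as the paper's: both take a sequence $\xi_n\to\xi$ in $\operatorname{spec}(b)$, use the hypothesis to produce points $w_n\in\mathbb D$ with $|b(w_n)|\le C$ and $w_n\to\xi$, and conclude $\xi\in\operatorname{spec}(b)$. The paper phrases the selection of the $w_n$ as a diagonal argument on double sequences $\{t_{n,k}\}$, while you directly pick $w_n$ with $|w_n-\xi_n|<1/n$; these are equivalent, and your discussion of why the approximating points can be taken in $\mathbb D$ (rather than possibly on $\partial\mathbb D$) makes explicit a point the paper leaves implicit.
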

\begin{proof}
Let us consider a sequence $\{\xi_n\}\subset \text{spec}(b)$ such that $\xi_n\to \xi$. For every $n$ the exists a sequence $\{t_{n,k}\}\subset \mathbb{D}$ such that $t_{n,k}\to \xi_n$ as $k \to \infty $ and $ |b(t_{n,k})|\leq  C $. If we apply a diagonal argument to the sequences $\{t_{n,k}\}$, we find a sequence $\{t_m\}\subset \mathbb{D}$ such that $t_m\to \xi$ and $|b(t_m)|\leq C$, which implies also that $\xi \in $spec$(b)$.
\end{proof}\noindent
So, the spectrum of a one-component bounded funcion is a closed subset of the unit circle.  Given a closed connected set $E \subset \overline{\D}$ let $w(a, \Gamma, \D \setminus E)$ be the harmonic measure at the point $a \in \D \setminus E$ of the set $\Gamma \subset \partial (\D \setminus E)$ in the domain $\D \setminus E$, that is, the value at the point $a$ of the harmonic function in the domain $\D \setminus E$ whose boundary values are $1$ at $\Gamma$ and $0$ elsewhere. We will use the following consequence of the classical Hall's Lemma. We recall that $\rho(z,w)$ is the pseudo-hyperbolic distance between $z$ and $w$.

\begin{lemma}\label{lem2.3}
Let $\Gamma$ be a curve contained in the unit disc joining the origin with a point in a Carleson square $Q(z)$ with $|z| >1/2$. Assume that $\rho (z, \Gamma) \geq 1/2$ and  $\D \setminus \Gamma$ is connected. Then there exits a constant $C(\Gamma) >0$ such that $w(z, \Gamma, \D \setminus \Gamma) \geq C(\Gamma) $.
\end{lemma}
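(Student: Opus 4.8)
The plan is to move the observation point to the origin by a disc automorphism, and then to recognise the transported curve as a connected set which, seen from the origin, subtends a definite arc of directions, so that a standard form of Hall's Lemma applies.

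First I would normalise. Let $p\in Q(z)$ be the endpoint of $\Gamma$, put $u=z/|z|$, and use the involution $\varphi_z(\zeta)=(z-\zeta)/(1-\overline z\zeta)$, so that $\varphi_z(z)=0$, $\varphi_z(0)=z$, $\varphi_z(u)=-u$, and $|\varphi_z(\zeta)|=\rho(z,\zeta)$ for all $\zeta\in\D$. By the conformal invariance of harmonic measure, $w(z,\Gamma,\D\setminus\Gamma)=w(0,\Gamma_*,\D\setminus\Gamma_*)$, where $\Gamma_*=\varphi_z(\Gamma)$ is again a continuum. The hypothesis $\rho(z,\Gamma)\ge 1/2$ becomes $\Gamma_*\subset\{1/2\le|\zeta|<1\}$; in particular $0\notin\Gamma_*$. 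Moreover $\Gamma_*$ contains $\varphi_z(0)=z$, which points in the direction $u$ and has modulus $|z|>1/2$, and it contains $w_*:=\varphi_z(p)$, with $|w_*|=\rho(z,p)\ge 1/2$.

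Next I would prove the geometric heart of the argument: there is an absolute constant $\delta_0\in(0,\pi)$ such that every point of $\varphi_z(Q(z))$ of modulus $\ge 1/2$ lies in the cone of directions of half-aperture $\delta_0$ about $-u=\varphi_z(u)$, and in particular $w_*$ does. This is a direct computation with $\varphi_z$: the angular constraint $|\arg\zeta-\arg u|<(1-|z|)/2$ built into the definition of $Q(z)$ is precisely what forces its image to be squeezed near $-u$, while the hypothesis $|z|>1/2$ keeps all the relevant quantities comparable (one can in fact take $\delta_0<\pi/2$). Granting this, since $\Gamma_*$ is connected and avoids the origin, the set of directions $\{\zeta/|\zeta|:\zeta\in\Gamma_*\}$ is an arc of $\partial\D$; it contains the direction $u$ (coming from $\varphi_z(0)$) together with a direction within $\delta_0$ of $-u$ (coming from $w_*$), hence it has length at least $\pi-\delta_0$.

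Finally I would invoke Hall's Lemma, in the form (extractable from its classical half-plane statement via projection onto a radial geodesic): there is $c=c(\delta_0)>0$ such that every continuum $E\subset\D$ with $0\notin E$ whose arc of directions as seen from the origin has length at least $\pi-\delta_0$ satisfies $w(0,E,\D\setminus E)\ge c$. Applied to $E=\Gamma_*$ and combined with the normalisation, this yields $w(z,\Gamma,\D\setminus\Gamma)\ge c$, with $c$ in fact independent of $\Gamma$ and $z$, which is stronger than the asserted $C(\Gamma)>0$. The step I expect to be the real obstacle is the cone estimate of the previous paragraph: one must check, with constants that do not degenerate as $|z|\to 1$, that the Möbius image of the Carleson square $Q(z)$ is confined to a fixed cone about $-u$ and, once the restriction $|\varphi_z(\zeta)|\ge 1/2$ forced by $\rho(z,\Gamma)\ge 1/2$ is imposed, stays away from a fixed neighbourhood of the direction $u$. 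The remainder is bookkeeping: the connectivity of $\D\setminus\Gamma$ is not actually needed for the conclusion (if $\Gamma$ disconnected $\D$ and separated $z$ from $\partial\D$ the harmonic measure would equal $1$), and the sole function of the hypotheses that $\Gamma$ joins $0$ to a point of $Q(z)$ and that $\rho(z,\Gamma)\ge 1/2$ is to produce, after normalisation, the nearly antipodal pair $z,\,w_*$ lying on the continuum $\Gamma_*$.
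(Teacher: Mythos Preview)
Your proposal is correct and follows essentially the same route as the paper: normalise by the involution $\varphi_z$ sending $z$ to $0$, observe that the transported curve lies in the annulus $\{1/2\le|\zeta|<1\}$ and connects the point $z$ (direction $u$) to $\varphi_z(p)$ (direction near $-u$), so that it is ``large'' as seen from the origin, and conclude via Hall's Lemma. The paper phrases the size condition as a uniform lower bound on the Euclidean diameter of $\varphi_z(\Gamma)$ and then invokes the radial/circular versions of Hall's Lemma from \cite{GM}, whereas you phrase it as a lower bound on the arc of directions subtended; these are equivalent in the present annular setting, and your remark that the resulting constant is in fact absolute (not merely $C(\Gamma)$) is implicit in the paper's argument as well.
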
\noindent
\begin{proof}
Let $\tau$ be the automorphism of the unit disc with $\tau^{-1} = \tau$ and $\tau (z) =0$. The conformal invariance of harmonic measure gives $w(z, \Gamma, \D \setminus \Gamma) = w(0, \tau(\Gamma), \D \setminus \tau (\Gamma))$. Note that $\tau ( \Gamma)$ is a curve satisfying $\inf\{|w|: w \in \tau (\Gamma)\} \geq 1/2$ 
and having diameter bounded below by an absolute constant. Thus, either the radial projection of $\tau (\Gamma)$ onto the unit circle or its circular projection onto the unit interval, is a connected set whose diameter is bounded below by an absolute constant. Then the radial or circular version of Hall's Lemma (see page 124 of \cite{GM}) finish the proof. \end{proof} 
\noindent 
Lemma \ref{lem2.3} will be used to prove the following auxiliary result .
\begin{lemma}\label{lem2.4}
Let $b$ be a one-component bounded function. Then there exists a constant $0<C<1$ such that $\limsup_{r\to 1} |b(r\xi)|\leq C<1$, for every $\xi \in $ spec$(b)$. 
\end{lemma}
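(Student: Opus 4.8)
The plan is to argue by contradiction using the definition of one-component together with Lemma \ref{lem2.3}. Fix a constant $0<C<1$ witnessing that $b$ is one-component, so that $\Omega_C := \{z\in\mathbb{D} : |b(z)|<C\}$ is connected and $\mathrm{spec}(b)\subset\overline{\Omega_C}$; by Lemma \ref{lem2.1} we may assume $C$ is as large as we like. Suppose, for contradiction, that there is a point $\xi\in\mathrm{spec}(b)$ along which $\limsup_{r\to 1}|b(r\xi)| = 1$. The idea is that this forces a ``thin tentacle'' of $\Omega_C$ reaching out toward $\xi$ while staying pseudo-hyperbolically far from the radius $[0,\xi)$ near $\xi$, and that such a tentacle is incompatible with connectedness: one can connect the origin to $\xi$ by a curve $\Gamma$ running inside $\mathbb{D}\setminus\overline{\Omega_C}$ (roughly along the radius), and then harmonic measure / the maximum principle applied to $\log(|b|/C)$ on $\Omega_C$ will be contradicted.

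First I would make the contradiction hypothesis quantitative. If $\limsup_{r\to 1}|b(r\xi)|=1$, then choosing $C$ close to $1$ we can find radii $r_k\to 1$ with $|b(r_k\xi)|>C$, i.e.\ the points $r_k\xi$ lie outside $\overline{\Omega_C}$. Since $\xi\in\mathrm{spec}(b)\subset\overline{\Omega_C}$, there are also points $w_k\in\Omega_C$ with $w_k\to\xi$. Passing to subsequences and using that $\Omega_C$ is connected, for each large $k$ there is a connected subset (a sub-arc of a connecting path inside $\Omega_C$) joining a fixed base point of $\Omega_C$ to $w_k$; in particular $\Omega_C$ contains points in the Carleson square $Q(r_k\xi)$.

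Next I would build the competing curve. I would like a curve $\Gamma$ from $0$ to $\xi$ that avoids $\overline{\Omega_C}$ near $\xi$ and satisfies the pseudo-hyperbolic separation and connectedness hypotheses of Lemma \ref{lem2.3}. The natural candidate is (a small perturbation of) the radius $[0,\xi)$: along it $|b(r\xi)|$ does not go below $C$ for a sequence of radii $r_k\to 1$, so we can splice together these ``good'' radial segments with short detours to obtain a curve that stays in $\{|b|\geq C\}$, hence disjoint from $\Omega_C$, and passes through the top part of the squares $Q(r_k\xi)$ at pseudo-hyperbolic distance $\geq 1/2$ from the $\Omega_C$-points located in those squares. (Here Lemma \ref{lem2.3} is used with the roles of curve and square as stated: $\Gamma$ plays the role of the curve joining $0$ to a point in $Q(z)$ with $z\in\Omega_C\cap Q(r_k\xi)$.) Then Lemma \ref{lem2.3} gives $w(z,\Gamma,\mathbb{D}\setminus\Gamma)\geq C(\Gamma)>0$ for each such $z$, uniformly.

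Finally I would derive the contradiction. On the simply connected domain $\Omega_C$ (or on $\mathbb{D}\setminus\Gamma\supset\Omega_C$), the function $\log(C/|b|)$ is positive and harmonic, with boundary values $0$ on $\partial\Omega_C\cap\mathbb{D}$ (where $|b|=C$) and with a uniform bound coming from $\|b\|_\infty=1$. Evaluating at the points $z\in\Omega_C\cap Q(r_k\xi)$ with $|b(z)|\to C$ and comparing with the harmonic-measure lower bound forces $\log(C/|b(z)|)$ to be bounded below by a fixed positive constant, contradicting $|b(z)|\to C$. The main obstacle — and the step I would spend the most care on — is the construction of the curve $\Gamma$ and the verification of the separation hypothesis $\rho(z,\Gamma)\geq 1/2$ for the relevant points $z\in\Omega_C$: one must argue that the deep part of $\Omega_C$ approaching $\xi$ cannot hug the radius, which is exactly where the connectedness of $\Omega_C$ and the fact that $|b|$ stays $\geq C$ along a radial sequence are both needed. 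Once $\Gamma$ is in place the rest is a routine application of Lemma \ref{lem2.3} and the maximum principle.
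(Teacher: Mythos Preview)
Your proposal has the right ingredients (connectedness of $\Omega_C$, Lemma~\ref{lem2.3}, the maximum principle) but you have inverted the roles of the curve $\Gamma$ and the test point $z$, and this inversion makes the argument break down at two places.

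The paper argues directly, not by contradiction. Since $\xi\in\overline{\Omega_{C_1}}$ and $\Omega_{C_1}$ is connected (and contains the origin once $C_1>|b(0)|$), there is a curve $\Gamma\subset\Omega_{C_1}$ joining $0$ to $\xi$; thus $|b|<C_1$ on $\Gamma$. The maximum principle then gives $\log|b(z)|\le(\log C_1)\,w(z,\Gamma,\mathbb{D}\setminus\Gamma)$ for $z\in\mathbb{D}\setminus\Gamma$. Now set $z=r\xi$: either $\rho(r\xi,\Gamma)\ge 1/2$, in which case Lemma~\ref{lem2.3} bounds the harmonic measure below by an absolute constant and hence $|b(r\xi)|\le C_1^{c}<1$; or $\rho(r\xi,\Gamma)<1/2$, in which case Schwarz's lemma applied at a nearby point of $\Gamma$ already gives $|b(r\xi)|\le (C_1+1/2)/(1+C_1/2)<1$. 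Either way the bound is uniform in $\xi$.

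In your version you try to place $\Gamma$ inside $\{|b|\ge C\}$ and the test point $z$ inside $\Omega_C$. Two things go wrong. First, the curve cannot be built as described: since Lemma~\ref{lem2.1} only lets you \emph{increase} $C$, once $C>|b(0)|$ the origin lies in $\Omega_C$, so any curve from $0$ starts inside $\Omega_C$; moreover the contradiction hypothesis gives only a \emph{sequence} $r_k\xi$ with $|b(r_k\xi)|>C$, not a connected radial arc, and there is no mechanism to ``splice'' these into a curve avoiding $\Omega_C$ --- indeed $|b(r\xi)|$ may dip below $C$ between consecutive $r_k$'s. Second, even granting such a $\Gamma$, the comparison runs the wrong way: the maximum principle bounds the subharmonic function $\log|b|$ from above using the harmonic measure of a set where $\log|b|$ is \emph{small}; knowing $\log|b|\ge\log C$ on $\Gamma$ yields no nontrivial bound at $z$. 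Your concluding step (``contradicting $|b(z)|\to C$'') is likewise not set up, since nothing forces the points $z\in\Omega_C\cap Q(r_k\xi)$ to satisfy $|b(z)|\to C$. The remedy is simply to swap the roles: take $\Gamma$ in the small-$|b|$ region supplied by connectedness, and let $z=r\xi$ be the point you wish to control.
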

\begin{proof}
Let $\xi \in $ spec$(b)$. Since $b$ is one-component, there exists a constant $0<C_1 <1$ and a curve $\Gamma \subset \D$ connecting the origin with $\xi$ such that $|b(w)| < C_1$ for any $w \in \Gamma$. The Maximum Principle gives that $$
\log |b(z)| \leq (\log C_1) w (z, \Gamma , \D \setminus \Gamma ), \quad z \in \D \setminus \Gamma . 
$$
Hence Lemma \ref{lem2.3} gives that $ |b(r \xi)| \leq C_1^{C(\Gamma)}$ if $\rho(r \xi, \Gamma) \geq 1/2$. Note that if $\rho(r \xi , \Gamma) < 1/2$, Schwarz's Lemma gives $|b(r \xi)| \leq (C_1 + 1/2)/ (1+ C_1 / 2)$. This finishes the proof. 
\end{proof}
\noindent
Since one-component bounded functions $b$ satisfy $\|b\|_{\infty} = 1$, Lemma \ref{lem2.4} gives that the spectrum of a one-component bounded functions is a proper subset of the unit circle.

Before concluding this preliminary section and moving to the proofs of the main results, we recall some properties of the derivatives of bounded analytic functions at points in the complement of their spectrum. For a more complete discussion on this subject, we refer to \cite{FM}.

\begin{lemma}\label{lem2.5}
Let $b \in H^\infty(\mathbb{D})$ with $||b||_{\infty} = 1$. Then, $ |b'(r\xi)|\leq 4|b'(\xi)|$ for every $\xi \in \partial \mathbb{D}\setminus \overline{\text{spec}(b)}$ and $1>r>1/2$.
\end{lemma}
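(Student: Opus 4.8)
The plan is to combine Julia's lemma with the Schwarz--Pick inequality; no analysis of the factorization of $b$ is needed. First I would dispose of the trivial case: if $b$ is constant it is a unimodular constant (since $\|b\|_\infty=1$), so $b'\equiv 0$ and there is nothing to prove. Assume henceforth that $b$ is non-constant, so that $|b(z)|<1$ for every $z\in\D$ by the maximum principle. Fix $\xi\in\partial\D\setminus\overline{\text{spec}(b)}$. As recalled just before the statement, $b$ extends analytically and unimodularly through an arc containing $\xi$; in particular $\alpha:=b(\xi)$ has modulus one and $b$ has a finite angular derivative at $\xi$, which coincides with the ordinary derivative $b'(\xi)$ of the extension.

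Now I would apply Julia's lemma to the self-map $b$ of $\D$ at the boundary point $\xi$: for every $z\in\D$,
\[
\frac{|\alpha-b(z)|^{2}}{1-|b(z)|^{2}}\le |b'(\xi)|\,\frac{|\xi-z|^{2}}{1-|z|^{2}} .
\]
Specialising to $z=r\xi$ and using $|\xi-r\xi|=1-r$ together with $1-|r\xi|^{2}=(1-r)(1+r)$, this becomes
\[
|\alpha-b(r\xi)|^{2}\le \frac{1-r}{1+r}\,|b'(\xi)|\,\bigl(1-|b(r\xi)|^{2}\bigr).
\]
The elementary point of the argument is that, since $|\alpha|=1$,
\[
1-|b(r\xi)|^{2}=\bigl(|\alpha|-|b(r\xi)|\bigr)\bigl(|\alpha|+|b(r\xi)|\bigr)\le 2\,|\alpha-b(r\xi)| .
\]
Inserting this into the previous inequality and cancelling the factor $|\alpha-b(r\xi)|$ (which is nonzero, as $|b(r\xi)|<1$) gives $|\alpha-b(r\xi)|\le \tfrac{2(1-r)}{1+r}|b'(\xi)|$, and feeding this back into the displayed elementary bound yields
\[
1-|b(r\xi)|^{2}\le \frac{4(1-r)}{1+r}\,|b'(\xi)| .
\]

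Finally I would invoke the Schwarz--Pick estimate $|b'(w)|\le \dfrac{1-|b(w)|^{2}}{1-|w|^{2}}$ at $w=r\xi$, obtaining
\[
|b'(r\xi)|\le \frac{1-|b(r\xi)|^{2}}{1-r^{2}}\le \frac{4(1-r)\,|b'(\xi)|}{(1+r)(1-r^{2})}=\frac{4\,|b'(\xi)|}{(1+r)^{2}}\le 4\,|b'(\xi)| ,
\]
which is the asserted inequality (it actually holds for all $0\le r<1$, and with the sharper constant $16/9$ when $r>1/2$). I do not expect a real obstacle: the only point that needs care is justifying that $b$ has a \emph{finite} angular derivative at $\xi$ so that Julia's lemma applies, and this is precisely the analytic-extension property across $\partial\D\setminus\overline{\text{spec}(b)}$ quoted above; the rest is the short chain of inequalities. (One also notes in passing that if the angular derivative vanished, Julia's lemma would force $b$ to be constant, which is why the non-constant reduction is harmless.)
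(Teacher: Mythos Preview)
Your argument is correct, and it is genuinely different from the paper's. The paper proves the lemma by writing $b=B_{\{z_n\}}S_\mu O_b$ and estimating the derivative of each factor separately via the explicit series/integral formulas, using the elementary inequality $|\xi-r|^2\ge |\xi-1|^2/2$ for $\xi\in\partial\D$, $1/2<r<1$; the three pieces are then recombined thanks to the identity $|b'(\xi)|=|B'(\xi)|+|S_\mu'(\xi)|+|O_b'(\xi)|$ at points $\xi\notin\overline{\text{spec}(b)}$ (cf.\ the computation behind Lemma~\ref{lem2.6}). Your route via Julia's inequality and Schwarz--Pick bypasses the factorization entirely, is shorter, and in fact yields the sharper bound $|b'(r\xi)|\le 4(1+r)^{-2}|b'(\xi)|$ valid for every $0\le r<1$, not just $r>1/2$. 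The only place where the paper's preparation is implicitly used is the analytic extension across $\partial\D\setminus\overline{\text{spec}(b)}$, which you invoke to guarantee that $b$ has a finite angular derivative at $\xi$; Lemma~\ref{lem2.6} independently confirms $b'(\xi)\neq 0$, consistent with your parenthetical remark.
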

\begin{proof}
Without loss of generality we can assume $\xi = 1$. Let $b =B_{\{z_n\}}S_\mu O_b$ be the inner-outer factorization of $b$. It is sufficient to prove the lemma separately for the three factors. Firstly
$$
|B'(r)|\leq \sum_n \frac{1-|z_n|^2}{|1-\overline{z_n}r|^2}\leq \frac{1}{r^2}\sum_n \frac{1-|z_n|^2}{|1/r-\overline{z_n}|^2}\leq 4 \sum_n \frac{1-|z_n|^2}{|1-\overline{z_n}|^2}=4|B'(1)|\ .
$$
For the singular inner factor we use the estimate $|\xi - r|^2 \geq |\xi - 1|^2 / 2$, $\xi \in \partial \D$, $1/2 < r < 1$. Then 
$$
|S_{\mu}'(r)|\leq 
\int_{\partial\mathbb{D}}\frac{2}{|\xi-r|^2}d\mu(\xi)\leq 
\int_{\partial\mathbb{D}}\frac{4}{|\xi-1|^2}d\mu(\xi)=2|S'_\mu(1)| .
$$
The same argument also gives 
$$
|O_b'(r)|\leq \bigg| \int_{\partial\mathbb{D}}\frac{2\xi}{(\xi-r)^2}\log|b^{-1}(\xi)|dm(\xi) \bigg|\leq 
\int_{\partial\mathbb{D}}\frac{4}{|\xi-1|^2}\log|b^{-1}(\xi)|dm(\xi)=2|O_b'(1)| \ .
$$
This finishes the proof.
\end{proof}

\begin{lemma}\label{lem2.6}
Let $b \in H^\infty(\mathbb{D})$ with $||b||_{\infty}=1$. Let $\xi_0 \in \partial \mathbb{D}$ such that $b(z)$ extends analytically at a neighbourhood $\mathcal{U}$ of $\xi_0$, having values of modulus one on $\partial \mathbb{D}\cap \mathcal{U}$. Then 
$$
\frac{\xi b'(\xi)}{b(\xi)}>0
$$
for every $\xi \in \mathcal{U}\cap \partial \mathbb{D}$.
\end{lemma}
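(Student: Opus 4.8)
The plan is to deduce both the reality and the strict positivity of $\xi b'(\xi)/b(\xi)$ from elementary properties of the harmonic function $u=\log|b|$ near the arc $\partial\D\cap\mathcal{U}$. We may assume $b$ is non-constant, so that $|b(z)|<1$ for every $z\in\D$ by the maximum modulus principle, and hence $u<0$ in $\D$. Fix $\xi_0\in\partial\D\cap\mathcal{U}$. Since $|b(\xi_0)|=1$ and $b$ is continuous, $b$ has no zeros on a smaller neighbourhood $\mathcal{U}'\subset\mathcal{U}$ of $\xi_0$; thus a single-valued analytic branch $f=\log b$ is defined on $\mathcal{U}'$, and $u=\Re f$ is harmonic on $\mathcal{U}'$, with $u\equiv 0$ on $\partial\D\cap\mathcal{U}'$ by the unimodularity hypothesis and $u<0$ on $\D\cap\mathcal{U}'$.

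Next I would use the polar-coordinate identities $z f'(z)=r\,\partial_r f=-i\,\partial_\theta f$, valid for $z=re^{i\theta}$. Evaluating at $z=\xi=e^{i\theta}\in\partial\D\cap\mathcal{U}'$ and noting that $z f'(z)=\xi b'(\xi)/b(\xi)$ there, taking real and imaginary parts gives $\Re\!\big(\xi b'(\xi)/b(\xi)\big)=\partial_r u$ and $\Im\!\big(\xi b'(\xi)/b(\xi)\big)=-\partial_\theta u$ along the arc. Since $u$ vanishes identically on $\partial\D\cap\mathcal{U}'$, its tangential derivative $\partial_\theta u$ is zero there, so $\xi b'(\xi)/b(\xi)$ is real. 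It remains to show that it is strictly positive.

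For positivity I would apply Hopf's boundary point lemma to $u$: it is harmonic and non-constant on $\D\cap\mathcal{U}'$, extends $C^1$ (indeed real-analytically, from the analytic extension of $b$) up to $\partial\D\cap\mathcal{U}'$, attains there its maximum value $0$ since $u<0$ inside, and $\D$ satisfies the interior ball condition at $\xi_0$; hence the outward normal derivative, which at a point of $\partial\D$ equals $\partial_r u$, is strictly positive at $\xi_0$. Combining with the previous step, $\xi_0 b'(\xi_0)/b(\xi_0)=\Re\!\big(\xi_0 b'(\xi_0)/b(\xi_0)\big)=\partial_r u(\xi_0)>0$, and since $\xi_0$ was arbitrary this is the claim. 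The only point requiring care is the verification of the hypotheses of Hopf's lemma — chiefly the $C^1$ regularity of $u$ up to the arc (this is exactly where the analytic extension of $b$ enters) and the strict negativity of $u$ inside (the strong maximum principle, via non-constancy). Alternatively one can bypass Hopf's lemma: once one checks that $b'(\xi_0)\neq 0$ — a zero of $b'$ of order $k-1\geq 1$ at $\xi_0$ would make $b$ carry the half-disc-like region $\D\cap\mathcal{U}'$ onto a region spanning opening angle $k\pi\geq 2\pi$ at $b(\xi_0)$, which cannot lie inside $\D$ — the map $b$ is conformal near $\xi_0$, sends the boundary arc into $\partial\D$, and is orientation-preserving, so it carries the counterclockwise arc (which keeps $\D$ on its left) to a counterclockwise arc at $b(\xi_0)$; this says precisely that $\theta\mapsto\arg b(e^{i\theta})$ is increasing, i.e. $\xi b'(\xi)/b(\xi)>0$. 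I would present the Hopf argument as the main line, since it handles reality and positivity uniformly and is the shortest.
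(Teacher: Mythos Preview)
Your proof is correct, but it takes a genuinely different route from the paper's. The paper proceeds by direct computation: writing $b = B_{\{z_n\}} S_\mu O_b$ via the inner--outer factorization and differentiating logarithmically, one obtains the explicit formula
\[
\frac{\xi b'(\xi)}{b(\xi)} = \sum_n \frac{1-|z_n|^2}{|\xi - z_n|^2} + \int_{\partial\D} \frac{2}{|\xi - t|^2}\,d\mu(t) + \int_{\partial\D} \frac{2}{|\xi - t|^2}\log|b(t)|^{-1}\,dm(t),
\]
a sum of nonnegative terms, which is strictly positive once $b$ is non-constant. Your argument is instead qualitative: you identify $\xi b'(\xi)/b(\xi)$ with the outward normal derivative $\partial_r u$ of $u=\log|b|$ at the arc and invoke Hopf's boundary point lemma (with the analytic extension supplying the required boundary regularity and an interior tangent ball in $\D$ giving a region where $u$ is harmonic). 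Both are clean; the paper's computation has the advantage of yielding a formula that is reused elsewhere (compare Lemma~\ref{lem2.5} and the estimates in Section~\ref{sec5}), while your approach avoids the inner--outer machinery entirely and would apply verbatim in more general geometric settings. Your alternative orientation argument (showing $b'(\xi_0)\neq 0$ via the opening-angle obstruction, then using that a conformal map preserves the side on which $\D$ lies) is also valid and is essentially the classical ``argument of $b$ is increasing along the circle'' principle.
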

\begin{proof}
 Let $b =B_{\{z_n\}}S_\mu O_b$ be the inner-outer factorization of $b$. With a straight computation we obtain that 
$$
\frac{\xi b'(\xi)}{b(\xi)}=\sum_n \frac{1-|z_n|^2}{|\xi-z_n|^2}+\int_{\partial\mathbb{D}}\frac{2}{|\xi-t|^2}d\mu(t)+\int_{\partial\mathbb{D}}\frac{2}{|\xi-t|^2}\log|b'(t)|^{-1}dm(t)\ , \quad \xi \in \partial \mathbb{D}\cap \mathcal{U}, 
$$
from which the lemma follows.
\end{proof}

\vspace{22 pt}\noindent
\section{Inner-Outer Factorization}\label{sec3}

In this section we will prove Theorem \ref{theo1.1} and Theorem \ref{theo1.2}, providing also some easy corollaries.  A Carleson square $Q$ is called dyadic if its closure intersected with the unit circle is a dyadic arc of the unit circle. Note that each dyadic Carleson square contains two dyadic Carleson squares of half sidelength. 

\begin{proof}[Proof of Theorem \ref{theo1.1}]
We define $\Omega_K:=\lbrace z \in \mathbb{D}:\ |b(z)|\leq K\rbrace $, $K>0$. 
We first prove that condition (\ref{eq1.1}) is necessary. Pick a constant $0 < C_1 <1$ such that $|b(0)|<C_1<1$, $\Omega_{C_1}$ is connected and
\begin{equation}\label{eq3.2}
\left\lbrace \xi \in \partial \mathbb{D}\ : \ \liminf_{z \to \xi}|b(z)|<1\right\rbrace \subseteq \overline{\Omega_{C_1}}\ .    
\end{equation} We argue by contradiction. Assume that there exists a sequence of points $\lbrace z_n\rbrace \subset \mathbb{D}$ such that $\lim_{n \to \infty}|b(z_n)|=1$ and $\sigma_b(Q(z_n))>0$ for every $n$. Note that $\Omega_{C_1} \cap Q(z_n)=\emptyset$ if $n$ is sufficiently large, since, otherwise, there would exists a connected path $\Gamma$ joining the origin and a point in $Q(z_n)$ with $|b(w)|\leq {C_1}$ for any point $w \in \Gamma$. Schwarz's Lemma gives that $\rho (z_n , \Gamma) \to 1$ as $n \to \infty$. The Maximum Principle gives that 
$$
\log |b (z)| \leq (\log C_1) w (z, \Gamma, \D \setminus \Gamma), \quad z \in \D \setminus \Gamma .  
$$
Now Lemma \ref{lem2.3} gives that $|b(z_n)|<C_1^{C( \Gamma)}<1$ if $n$ is sufficiently large,  which contradicts the assumption. Therefore, if $n$ is sufficiently large the set $\Omega_{C_1} \cap Q(z_n)$ has to be empty and, in particular,  $b(z)\neq 0$ for every $z \in Q(z_n)$. On the other hand, because of (\ref{eq3.2}), we deduce also that $\lim_{z\to \xi}|b(z)|=1$, for every $\xi \in \overline{Q(z_n)}\cap \partial \mathbb{D}$, which implies that $\sigma_b(Q(z_n))=0$. 

We now prove that condition (\ref{eq1.1}) is sufficient. Let $C_1<C_2<1$ be a constant which will be fixed later. Let $\mathcal{A}=\lbrace Q_j\rbrace$ be the collection of maximal dyadic Carleson squares such that
\begin{equation}\label{eq3.3}
\sup_{z \in T(Q_j)} |b(z)|\geq C_2\ .
\end{equation}
By maximality, $|b(w)|\leq C_2$ for any $w \in \mathbb{D}\setminus \cup_j Q_j$. We note that if $(1-C_2)(1-C_1)^{-1}$ is sufficiently small, Schwarz's Lemma shows that Condition (\ref{eq3.3}) implies that $|b(z)|\geq C_1$ for any $z \in T(2Q_j)$. We fix $0 < C_2 <1$ with this property. Therefore, the assumption (\ref{eq1.1}) implies that $\sigma_b(2Q_j)=0$. In particular, $b$ extends analytically through $\overline{2Q_j}\cap \partial \mathbb{D}$ and $|b(\xi)|=1$ for every $\xi \in \overline{2Q_j}\cap \partial \mathbb{D}$. Hence, we deduce that
$$
\left\lbrace \xi \in \partial \mathbb{D}\ : \ \liminf_{z \to \xi}|b(z)|<1\right\rbrace \subseteq \overline{\mathbb{D}\setminus \cup Q_j} \cap \partial\mathbb{D}\ .
$$
We fix $C_3>C_2$. We note that $\Omega_{C_3}\supseteq \mathbb{D}\setminus \cup Q_j$. Hence 
$$
\left\lbrace \xi \in \partial \mathbb{D}\ : \ \liminf_{z \to \xi}|b(z)|<1\right\rbrace \subseteq \overline{\Omega_{C_3}}\ .
$$
It only remains to show that $\Omega_{C_3}$ is connected. Let $\Omega_1$ be the connected component of $\Omega_{C_3}$ containing $\mathbb{D}\setminus \cup Q_j$. If,  by contradiction, we assume that there exists another connected component $\Omega_2 \neq \Omega_1$ of $\Omega_{C_3}$, then $\Omega_2 \subset \cup Q_j$. Note that $\Omega_2$ is simply connected and $\partial \Omega_2\cap \partial\mathbb{D}$ can have at most two points. If $ \phi: \ \mathbb{D}\to \Omega_2$ is a conformal map, then $(C_3)^{-1} ( b \circ \phi) $ would be an inner function. Since it cannot be constant, we deduce that $\inf_{\Omega_2} |b(z)|=0$. However, this would contradict the following claim.
\begin{claim}\label{cl3.1}
There exists a constant $C>0$ such that for any $j=1,2, \ldots$ and any $w \in Q_j = Q(z_j)$ we have 
$$
C^{-1} \frac{\log |b(z_j)|^{-1}}{1-|z_j|} \leq \frac{\log |b(w)|^{-1}}{1-|w|} \leq C \frac{\log |b(z_j)|^{-1}}{1-|z_j|} . 
$$
\end{claim}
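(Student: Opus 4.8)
The plan is to exploit the selection rule for the squares $Q_j$ in the Whitney-type decomposition $\mathcal{A}$ together with the hypothesis (\ref{eq1.1}), which forces $\sigma_b$ to vanish on a neighbourhood of each $Q_j$. First I would set up the geometry: since $Q_j$ is a maximal dyadic square satisfying (\ref{eq3.3}), its dyadic parent $\widetilde Q_j$ fails (\ref{eq3.3}), so $\sup_{z\in T(\widetilde Q_j)}|b(z)|< C_2$; since moreover $\sigma_b(2Q_j)=0$ (as established in the proof of Theorem \ref{theo1.1}), there is a definite annular region around $Q_j$, contained in $2Q_j$ and outside $\cup_k Q_k$ except for $Q_j$ itself, on which $|b|\ge C_1$ and hence on which $\log|b|^{-1}$ is a bounded positive harmonic function vanishing on $\overline{2Q_j}\cap\partial\mathbb D$. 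The function $u(w):=\log|b(w)|^{-1}$ is the Poisson integral (with respect to the domain $\mathbb D\setminus(\overline{2Q_j}\cap\partial\mathbb D)$, or just as a positive superharmonic function on $\mathbb D$) of the part of $\sigma_b$ lying inside $Q_j$; the point of $\sigma_b(2Q_j)=0$ is that there is no mass of $\sigma_b$ in the collar $2Q_j\setminus Q_j$, so $u$ extends harmonically and positively past $\overline{2Q_j}\cap\partial\mathbb D$.

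The key step is then a Harnack-type comparison. Write $u(w)=\log|b(w)|^{-1}$; it is a nonnegative harmonic function on the region $\mathcal R:=2Q_j\setminus K_j$, where $K_j$ denotes the closed set carrying the $\sigma_b$-mass inside $Q_j$ (the union of the relevant zeros, the support of $\mu$ inside $Q_j$, and $\overline{Q_j}\cap\partial\mathbb D$ with the weight $\log|b|^{-1}$). Since $u$ vanishes on $\overline{2Q_j}\cap\partial\mathbb D$ and is bounded on $2Q_j\setminus Q_j$ (by $\log C_1^{-1}$), the boundary Harnack principle / a direct Poisson-kernel estimate in the fixed-shape Carleson box $2Q_j$ gives that $u(w)$ is comparable, up to an absolute constant, to $u(z_j)\cdot \frac{1-|w|}{1-|z_j|}$ uniformly for $w$ in the "upper half" of $Q_j$ near its boundary $\partial Q_j$ — more precisely for $w$ at pseudohyperbolic distance $\ge c$ from $K_j$. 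Equivalently, $\frac{u(w)}{1-|w|}$ is comparable to $\frac{u(z_j)}{1-|z_j|}$ there. For $w$ in the \emph{interior} region (close to $K_j$), one uses that the potential $u$ only increases as one approaches its carrier, giving the lower bound on $\frac{u(w)}{1-|w|}$ directly from the estimate at the scale-$\ell(Q_j)$ points, while the upper bound needs the observation that $u$ cannot blow up faster than $(1-|w|)^{-1}$ times a Carleson-box estimate of its total mass $\sigma_b(Q_j)$ — and that total mass is itself controlled by $u(z_j)(1-|z_j|)$ via the standard estimate $\sigma_b(Q_j)\lesssim (1-|z_j|)\,\log|b(z_j)|^{-1}$ that follows from the subharmonicity of $\log|b|^{-1}$ combined with (\ref{eq3.3}) at the scale of $Q_j$. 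Chaining these comparisons across the (boundedly many) dyadic scales between $w$ and the top $T(Q_j)$ yields the two-sided inequality with an absolute constant $C$.

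I expect the main obstacle to be the \emph{lower} bound $\frac{\log|b(w)|^{-1}}{1-|w|}\ge C^{-1}\frac{\log|b(z_j)|^{-1}}{1-|z_j|}$ uniformly down to points $w$ deep inside $Q_j$, i.e. ruling out that $|b|$ gets very close to $1$ again on a sub-square of $Q_j$. This is exactly where the maximality of $Q_j$ in $\mathcal A$ is used: any dyadic sub-square $Q'\subset Q_j$ on whose top $|b|\ge C_2$ would have to be contained in some maximal square of $\mathcal A$, but that maximal square is $Q_j$ itself or one of the other $Q_k$'s, and the latter are disjoint from $Q_j$ — so no such $Q'$ exists strictly inside $Q_j$, forcing $|b|\le C_2$ on all tops of proper dyadic sub-squares of $Q_j$ and hence $\log|b|^{-1}\ge \log C_2^{-1}>0$ to persist at a controlled rate. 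Combined with the harmonicity of $\log|b|^{-1}$ on $\mathcal R$ and a Harnack chain from $z_j$ to $w$ (whose length is $O(\log\frac{1-|z_j|}{1-|w|})$, but each link costs only a bounded factor precisely because $|b|$ stays in $[C_1,C_2]$ along the chain), this gives the required uniform lower bound, completing the proof of the claim.
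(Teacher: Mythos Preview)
Your proposal contains confusions that obscure what is actually a three-line argument. The key fact you cite --- that $\sigma_b(2Q_j)=0$ --- already says there is \emph{no} $\sigma_b$-mass in $Q_j$; your set $K_j$ is therefore empty, and the discussion of ``points close to $K_j$'' and of the potential ``increasing towards its carrier'' is vacuous. More seriously, your maximality argument for the lower bound is backwards: maximality of $Q_j$ in $\mathcal A$ means no \emph{larger} dyadic square satisfies (\ref{eq3.3}); it does not preclude proper dyadic sub-squares $Q'\subset Q_j$ from having $\sup_{T(Q')}|b|\ge C_2$, and in fact such $Q'$ abound (since $|b|=1$ on $\overline{Q_j}\cap\partial\mathbb D$). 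Your conclusion that ``$|b|\le C_2$ on all tops of proper dyadic sub-squares of $Q_j$'' is therefore false, and the Harnack-chain estimate built on it collapses.

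The paper's argument is much more direct. Since $\sigma_b(2Q_j)=0$, the Poisson-type representation gives, for every $w\in Q_j$,
\[
\frac{\log|b(w)|^{-1}}{1-|w|}\ \cong\ \int_{\overline{\mathbb D}\setminus 2Q_j}\frac{d\sigma_b(z)}{|1-\bar z w|^2}.
\]
For $z\notin 2Q_j$ and $w\in Q_j$ one has $|1-\bar z w|\cong |1-\bar z z_j|$ uniformly, so the right-hand side is comparable to the same integral with $w$ replaced by $z_j$, which is $\frac{\log|b(z_j)|^{-1}}{1-|z_j|}$. That is the whole proof: the absence of nearby mass makes the normalised kernel essentially constant across $Q_j$. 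A boundary-Harnack argument on $2Q_j$ (for the positive harmonic function $\log|b|^{-1}$ vanishing on $\overline{2Q_j}\cap\partial\mathbb D$) could in principle also be made to work, but you have not carried it out correctly, and it is in any case unnecessary.
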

\begin{proof}
We note that 
$$
\log|b(w)|^{-1}\cong\int_{\overline{\mathbb{D}}}P_w(z)d\sigma_b(z)\ ,
$$
when $w \in Q(z_j)$. Since $\sigma_b(2Q(z_j))=0$, we deduce
$$
\frac{\log |b(w)|^{-1}}{1-|w|} \cong \int_{\overline{\mathbb{D}}\setminus 2Q(z_j)}\frac{1}{|1-\bar{z}w|^2}d\sigma_b(z)\cong \int_{\overline{\mathbb{D}}\setminus 2Q(z_j)}\frac{1}{|1-\bar{z}z_j|^2}d\sigma_b(z)\cong\frac{\log |b(z_j)|^{-1}}{1-|z_j|} \ .
$$
\end{proof}
\noindent 
This claim finishes the proof.
\end{proof}
\noindent
Using the Theorem \ref{theo1.1}, it is easy to verify that the product of two one-component bounded functions is still one-component. 
\begin{corollary}\label{cor3.1}
Let $b_1 , b_2$ be two one-component bounded functions. Then  $b_1 b_2$ is a one-component bounded function.
\end{corollary}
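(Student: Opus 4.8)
The plan is to use the characterization from Theorem \ref{theo1.1}, so that checking that $b_1 b_2$ is one-component reduces to verifying the vanishing condition \eqref{eq1.1} for the measure $\sigma_{b_1 b_2}$. The first observation is that $\sigma$ behaves additively under products: since $b_1 b_2 = B_{\{z_n^{(1)}\}} B_{\{z_n^{(2)}\}} S_{\mu_1} S_{\mu_2} O_{b_1} O_{b_2}$ and $\log|b_1 b_2|^{-1} = \log|b_1|^{-1} + \log|b_2|^{-1}$ on $\partial\D$, we have $\sigma_{b_1 b_2} = \sigma_{b_1} + \sigma_{b_2}$ as measures on $\overline{\D}$. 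Hence $\sigma_{b_1 b_2}(Q(z)) = 0$ if and only if $\sigma_{b_1}(Q(z)) = 0$ and $\sigma_{b_2}(Q(z)) = 0$.

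Next I would apply Theorem \ref{theo1.1} to each factor: there are constants $0 < C_1^{(1)}, C_1^{(2)} < 1$ such that $\sigma_{b_i}(Q(z)) = 0$ whenever $|b_i(z)| \geq C_1^{(i)}$. The key step is then to choose a threshold $C_1$ close enough to $1$ that $|b_1 b_2(z)| \geq C_1$ forces both $|b_1(z)| \geq C_1^{(1)}$ and $|b_2(z)| \geq C_1^{(2)}$. This works because $|b_1|, |b_2| \leq 1$, so $|b_1(z)| \geq |b_1(z) b_2(z)|$ and likewise for $b_2$; thus taking $C_1 := \max\{C_1^{(1)}, C_1^{(2)}\}$ (or anything larger, but still $< 1$) already does the job, since $|b_1 b_2(z)| \geq C_1$ implies $|b_i(z)| \geq |b_1 b_2(z)| \geq C_1 \geq C_1^{(i)}$ for $i = 1,2$. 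Consequently $\sigma_{b_i}(Q(z)) = 0$ for both $i$, hence $\sigma_{b_1 b_2}(Q(z)) = 0$, and Theorem \ref{theo1.1} gives that $b_1 b_2$ is one-component.

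There is essentially no obstacle here once the additivity of $\sigma$ is in place; the only point requiring a line of care is that $\|b_1 b_2\|_\infty = 1$, which follows because $\text{spec}(b_1) \neq \partial\D$ (by Lemma \ref{lem2.4}), so on the open arc $\partial\D \setminus \overline{\text{spec}(b_1)}$ one has $|b_1| = 1$, and similarly for $b_2$ away from its spectrum; choosing a point of $\partial\D$ outside both spectra — possible since each spectrum is a proper closed subset — shows $|b_1 b_2| = 1$ there, whence $\|b_1 b_2\|_\infty = 1$ and Theorem \ref{theo1.1} genuinely applies.
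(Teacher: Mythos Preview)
Your argument via Theorem~\ref{theo1.1} and the additivity $\sigma_{b_1 b_2}=\sigma_{b_1}+\sigma_{b_2}$ is exactly the paper's proof: the paper picks a single constant $C$ working for both $b_i$, then observes that $|b_1 b_2(z)|>C_1>C$ forces $|b_i(z)|>C$ since $|b_j|\le 1$, whence $\sigma_{b_1 b_2}(Q(z))=\sigma_{b_1}(Q(z))+\sigma_{b_2}(Q(z))=0$. Your choice $C_1=\max\{C_1^{(1)},C_1^{(2)}\}$ is the same idea.

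You go beyond the paper in flagging that Theorem~\ref{theo1.1} needs $\|b_1 b_2\|_\infty=1$, a hypothesis the paper's proof does not check. However, your justification of this point has a gap: the fact that each spectrum is a proper closed subset of $\partial\D$ does \emph{not} guarantee a common exterior point, since two proper closed arcs (e.g.\ the closed upper and lower semicircles) can cover $\partial\D$. By Theorem~\ref{theo1.2} one can build one-component functions $b_1,b_2$ with exactly those spectra, and then Lemma~\ref{lem2.4} forces $|b_1 b_2|\le C<1$ at every boundary point, so $\|b_1 b_2\|_\infty<1$. Thus the issue you spotted is real and your patch does not close it; the paper simply leaves this point unaddressed.
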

\begin{proof}
We need to verify the sufficient condition (\ref{eq1.1}) of Theorem \ref{theo1.1}. We know that there exists a constant $0<C<1$ such that 
$$
\sigma_{b_i}(Q(z))=0, \ \text{ if } |b_i(z)|\geq C , \quad i=1,2.
$$
Pick a constant  $C < C_1 < 1$. Assume $|b_1(z)b_2(z)|>C_1$. Then  $|b_i(z)|>C_1$, $i=1,2$ and hence 
$$
\sigma_{b_1 b_2}(Q(z))=\sigma_{b_1}(Q(z))+\sigma_{b_2}(Q(z))=0\ . 
$$
This finishes the proof.
\end{proof}
\noindent
We note that the same result for one-component inner functions has been already proved in \cite{CM1} with a different argument. Theorem \ref{theo1.1} can be also used to prove that any proper closed subset $E \subset \partial \D$ is the spectrum of  one-component bounded function.

\begin{proof}[Proof of Theorem \ref{theo1.2}]
Write $\partial \mathbb{D}\setminus E= \cup I_k$, where $I_k$ are open arcs. For any $I_k$ consider its endpoints $e^{i t_k(1)} $, $e^{i t_k (2)}$ and the {\it isosceles triangle} $T_k$ defined as
$$ 
T_k = \left\{r e^{i t} : t_k (1) < t < t_k (2) , 0 \leq 1-r< \min \{ |t - t_k (1)|, |t - t_k (2)| \} \right\}  . 
$$
We locate $\{z_{k,j}\}_j$ on $\partial T_k \cap \mathbb{D}$ such that the pseudo-hyperbolic distance $\rho(z_{k,j+1},z_{k,j})=\delta$ is independent of $k$ and $j$ and $\delta < 1/2$. The sequence $\{z_{k,j}\}_{j,k}$ is a Blaschke sequence since 
$$
\sum_k\sum_j 1-|z_{k,j}|\lesssim\sum_k |I_k|< \infty\ .
$$
Let $B$ be the Blaschke product with zeros $\{z_{k,j} : k, j\}$.  Consider the bounded analytic function $b$ defined as
$$
b(z):=\exp \left( \int_E \frac{\xi+z}{\xi-z}\log(1/2) dm(\xi)\right) B(z) , \quad z \in \D .
$$
Since $\partial \D \setminus E$ has positive measure we have $\|b\|_\infty = 1$. Because of the construction, it is clear that spec$(b)=E$. The only thing left to prove is that the function $b$ is one-component. We first note that if $z \in \cup_k \partial T_k \cap  \D$, Schwarz's lemma gives 
$ |B(z)|\leq 1/2$. Hence $|b(z)| < 1/2$ for any $z \in  \cup_k \partial T_k \cap  \D \cup E$. The Maximum Principle gives that $|b(z)|\leq 1/2$ for any $z \in \mathbb{D}\setminus \cup_k T_k$. Fix $C>1/2$.  If $|b(z))|\geq C$ then $z\in \cup_k T_k$ and consequently $\sigma_b(Q(z))=0$. We can now apply Theorem \ref{theo1.1} and we deduce that $b$ is one-component.

\end{proof}
\noindent
We close this section with another consequence of Theorem \ref{theo1.1}. 
\begin{corollary}\label{cor3.3}
Let $b \in H^\infty ( \D)$ with $\|b\|_\infty = 1$. Assume there exists a constant $0<C<1$ such that spec$(b)\subset \overline{\left\lbrace z \in \mathbb{D}:\ |b(z)|\leq C\right\rbrace}$. Then there exists a Blaschke product $B$ such that the function $bB$ is one-component and spec$(b)=$spec$(bB)$.
\end{corollary}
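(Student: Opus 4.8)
First observe that necessarily $E:=\text{spec}(b)\subsetneq\partial\mathbb D$: otherwise $\text{spec}(bB)=\partial\mathbb D$ for any Blaschke product $B$, and a function with non-proper spectrum cannot be one-component (Lemma \ref{lem2.4}). We take this for granted. The plan is to produce a Blaschke product $B$ whose zeros accumulate only on $E$, so that $\text{spec}(bB)=E$, and such that for some $C<C'<1$ the sublevel set $\{z\in\mathbb D:|bB(z)|<C'\}$ is connected. Since $|bB|\le|b|$ one has $\{|b|\le C\}\subseteq\{|bB|\le C'\}$, so Condition (a) passes from $b$ to $bB$ with the constant $C'$, and $bB$ is then one-component.

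Fix $C<C'<1$ and let $\{\Omega_\alpha\}$ be the connected components of $\{|b|<C'\}$. A component whose closure meets $\partial\mathbb D$ does so only at points of $E$, because $b$ extends analytically and unimodularly across $\partial\mathbb D\setminus E$; hence a component with $\text{dist}(\Omega_\alpha,E)\ge\varepsilon$ is relatively compact in $\mathbb D$, and it contains a zero of $b$, for otherwise $\log|b|$ would be harmonic on $\Omega_\alpha$, continuous up to the boundary and constantly equal to $\log C'$ on $\partial\Omega_\alpha$, forcing $|b|\equiv C'$. Since the zeros of $b$ accumulate only on $E$, only finitely many of them — hence only finitely many components $\Omega_\alpha$ — lie outside any fixed neighbourhood of $E$. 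A finite Blaschke product whose zeros run along finitely many arcs joining these exceptional components to one another and to a fixed component adjacent to $E$ absorbs all of them into a single component of the new sublevel set, without changing $\text{spec}$.

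It remains to connect the components clustering near $E$. Here I would place, inside a thin neighbourhood of $E$, a family of connecting arcs linking the components of $\{|b|<C'\}$, and let $B$ (together with the finite product above) be the Blaschke product with a pseudohyperbolically $\delta$-separated sequence of zeros $\{w_m\}$ along these arcs. Near each such zero $|bB|$ is small, so a neighbourhood of every arc lies in $\{|bB|<C'\}$, which thus becomes connected; and since the arcs lie in a shrinking neighbourhood of $E$, the zeros accumulate only on $E$. The point to be established — and the main difficulty — is that this can be done with a Blaschke-summable family of zeros: the total mass $\sum_m(1-|w_m|)$ of the zeros of $B$ is controlled by $\sum_j\ell(Q_j)$ over a family of Carleson squares carrying the components of $\{|b|<C'\}$ near $E$, and the convergence of this sum is exactly what Condition (a) buys. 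Indeed, if $|b|$ remained bounded away from $C$ on a non-negligible part of a Stolz region over a point $\xi\in E$ carrying the relevant mass, then $\xi$ would not lie in $\overline{\{|b|\le C\}}$; thus (a) rules out wide plateaus of near-maximal modulus accumulating on $E$, and this is precisely what keeps the connecting arcs short at each scale and makes the corresponding Carleson sum finite. Granting this, $\{|bB|<C'\}$ is connected, $\text{spec}(bB)=E$, and $bB$ is one-component.
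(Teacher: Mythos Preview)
Your route is genuinely different from the paper's. You attack connectivity of the sublevel set head-on: join the finitely many components of $\{|b|<C'\}$ away from $E$ by a finite Blaschke product, then lay down an infinite family of arcs near $E$ carrying $\delta$-separated zeros to glue the remaining components together. The paper never touches the component structure of the sublevel set. Instead it uses the criterion of Theorem~\ref{theo1.1}: it builds an explicit region $\Gamma\subset\mathbb D$ (a union of ``rectangles'' over a Whitney-type decomposition of $\partial\mathbb D\setminus\text{spec}(b)$, pushed in far enough that $|b|\ge(1+C)/2$ on $\Gamma$), places pseudohyperbolically $\varepsilon$-separated zeros on $\partial\Gamma\cap\mathbb D$ so that $|B|\le C$ there, and then the Maximum Principle forces $|bB|\le C$ on all of $\mathbb D\setminus\Gamma$. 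Hence $|bB(z)|>C_1$ implies $z\in\Gamma$, where $\sigma_{bB}(Q(z))=0$ by construction, and Theorem~\ref{theo1.1} finishes. No connectivity argument, no accounting of components.

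Your sketch has a real gap precisely where you locate the main difficulty. The sentence ``the convergence of this sum is exactly what Condition (a) buys'' is not a proof, and the heuristic that follows does not make it one: Condition (a) says each $\xi\in E$ lies in $\overline{\{|b|\le C\}}$, but this does not by itself control the number, size, or mutual position of the components of $\{|b|<C'\}$ clustering near $E$, nor does it identify what the Carleson squares $Q_j$ are or why $\sum_j\ell(Q_j)<\infty$. The Stolz-region remark only says that $\{|b|\le C\}$ meets every Stolz angle over $E$, which is far weaker than the quantitative bound you need on the total length of connecting arcs. Even granting summability, the construction of the arcs (``a family of connecting arcs linking the components'') is left entirely unspecified, so one cannot verify that the resulting $\{|bB|<C'\}$ is connected. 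The paper's approach dodges all of this: the curve $\partial\Gamma\cap\mathbb D$ is explicit, its Blaschke mass is controlled by $\sum_k|I_k|\le 2\pi$, and Theorem~\ref{theo1.1} replaces the connectivity check.
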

\begin{proof}
Lemma \ref{lem2.2} gives that spec$(b)$ is closed. We write $\partial \mathbb{D}\setminus \text{spec}(b)=\cup_k I_k$ where each $I_k$ is a closed arc of $\partial \mathbb{D}$ satisfying $|I_k| = \text{dist}(I_k, \overline{\Omega_C})$. 
We choose $r_k \in (0,1)$ such that, if $|z|\geq r_k$ and $e^{i\text{arg}(z)} \in I_k$, then $|b(z)|\geq (1+ C)/2$. Let $\Gamma$ be the region given by $ \Gamma = \cup_{k=1}^\infty \{ r\xi: r\geq r_k \text{ and } \xi \in I_k\}$. We fix $0< \varepsilon < C$ and we pick points $\{z_{j} \}\subset \partial \Gamma \cap \mathbb{D}$ so that the pseudo-hyperbolic distance $ \rho (z_{j} , z_{j+1})=\varepsilon$. We consider the Blaschke product $B$ whose zeros are $\{z_j\}$. Due to Schwarz's lemma, $|B(z)|\leq C$
when $z \in \partial\Gamma\cap \mathbb{D}$ and, consequently, $|bB| < C$ on $ \left(\partial \Gamma \cap \D\right)    \cup \text{spec}(b)$. The Maximum Principle gives $|b(z)B(z)| \leq C$ for any $z \in \mathbb{D} \setminus \Gamma$. We fix $C< C_1 < 1 $. If $|b(z)B(z)|\geq C_1$ then $z\in \Gamma$ and consequently $\sigma_{bB}(Q(z))=0$. We apply now Theorem \ref{theo1.1} and we obtain that $bB$ is one-component.
\end{proof}
\noindent
Before concluding this section, we highlight that if $b_1, b_2 $ are one-component bounded functions, even if $b_1 /b_2 \in H^\infty$, it may happen that $b_1 /b_2 $ is not one-component. For sake of completeness, we recall that the regularity of the quotient of two one-component inner functions has been studied in \cite{CM2}.

\vspace{22 pt}
\section{Reproducing kernels for one-component bounded function}

In this section we will prove Theorems \ref{theo1.3}, \ref{theo1.4} and \ref{theo1.5}.

\begin{proof}[Proof of Theorem \ref{theo1.3}]
We start by proving \emph{a)}. Let $0< C_1 < 1$ be the constant given by Theorem \ref{theo1.1} and let $C_1< C_2 = C_2 (C_1) < 1$ be a constant to be fixed later. We consider the family of maximal dyadic Carleson squares $\left\lbrace Q(z_j)\right\rbrace$ such that 
$$
\sup_{z \in T(Q(z_j))} |b(z)|\geq C_2 . 
$$
We note that $|b(z)|\leq C_2$ when $z \in \mathbb{D}\setminus \cup_j Q(z_j)$. Taking $C_2 > C_1$ sufficiently close to $1$, Schwarz's lemma gives that  $\sup_{z \in T(\widetilde{Q}(z_j))}|b(z)|\geq C_1$, where $\widetilde{Q}(z_j)$ is the dyadic Carleson square containing $Q(z_j)$ with double sidelenght. We fix $0< C_2 < 1$ with this property. Applying Theorem \ref{theo1.1}, we obtain that $\sigma_b(\widetilde{Q}(z_j))=0$. Therefore, the function $b$ extends analytically on $\overline{\widetilde{Q}(z_j)}\cap \partial \mathbb{D}$ and $|b(\xi)|=1$ for every $\xi\in \overline{\widetilde{Q}(z_j)}\cap \partial \mathbb{D}$. Claim \ref{cl3.1} gives  
\begin{equation}\label{eq4.1}
\frac{\log|b(w)|^{-1}}{1-|w|}\cong \frac{\log|b(z_j)|^{-1}}{1-|z_j|}\cong \frac{1}{|\ell(Q(z_j))|}\ , 
\end{equation}
for every $w \in Q(z_j)$, uniformly on $j$.

With no loss of generality, we assume that $|b(a)|$ is close to $1$. Hence $a \in \cup_j Q(z_j)$. We fix $j$ so that $a \in Q(z_j)$. Moreover, we can also assume that the pseudo-hyperbolic distance $\rho(a, \mathbb{D}\setminus \cup_j Q(z_j))$ is close to 1, since, otherwise, by Schwarz's lemma $|b(a)|$ would not be close to $1$. We want to show that there exists a constant $C>0$ such that
$$
\bigg| \frac{1-\overline{b(z)}b(a)}{1-\overline{z}a}\bigg|\leq C \frac{1-|b(a)|^2}{1-|a|^2}\ , \quad z \in \mathbb{D} . 
$$
If $z \notin 2Q(z_j)$, then
$$
\bigg| \frac{1-\overline{b(z)}b(a)}{1-a\bar{z}}\bigg| \leq \frac{2}{|\ell(Q(z_j))|}\cong \frac{\log|b(a)|^{-1}}{1-|a|}\cong \frac{1-|b(a)|^2}{1-|a|^2}\ .
$$
If $z\in 2Q_j$, since
$$
\frac{1-\overline{b(z)}b(a)}{1-a\bar{z}}=\frac{1-|b(a)|^2}{1-a\bar{z}}-b(a)\frac{\overline{b(z)-b(a)}}{1-\overline{z}a}\ 
$$
and 
$$
\frac{1-|b(a)|^2}{|1-\overline{z}a|}\lesssim \frac{1-|b(a)|^2}{1-|a|^2}\ ,
$$
it is sufficient to prove that
$$
\bigg| \frac{b(z)-b(a)}{1-\overline{z}a}\bigg|\lesssim \frac{1-|b(a)|^2}{1-|a|^2}\ .
$$
For $w \in \D \setminus \{0 \}$, we denote $w^* = w / |w|$.  We note that
$$
\frac{| b(z)-b(a)|}{|1-\overline{z}a|}\leq \frac{| b(z)-b(z^*)|}{|1-\overline{z}a|}+\frac{|b(z^*) -b( a^*)|}{|1-\overline{z}a|}+\frac{| b(a^*)-b(a) |}{|1-\overline{z}a|}\ ,
$$
and we estimate the three terms separately. Firstly, we note that (\ref{eq4.1}) gives that 
\begin{equation}\label{eq4.1.2}
    |b'(\xi)|\lesssim \frac{1}{\ell(Q(z_j))}, \quad \xi \in  \partial\mathbb{D}  \cap \overline{Q(z_j)}
\end{equation}
and consequently
$$
\frac{|b(z^*)-b (a^*)|}{|1-\overline{z}a|}\leq \frac{1}{|1-\overline{z}a|}\int_{z^*}^{a^*}|b'(t)|dt\lesssim \frac{C}{|\ell(Q(z_j))|}\frac{|z-a|}{|1-\overline{z}a|}\lesssim \frac{1-|b(a)|^2}{1-|a|^2}\ ,
$$
where in the last inequality we have used (\ref{eq4.1}).
Moreover, by using Lemma \ref{lem2.5}, (\ref{eq4.1.2}) and (\ref{eq4.1}), we obtain that
$$
\frac{|b(a)-b(a^*)|}{|1-\overline{z}a|}\lesssim \frac{1-|a|}{|\ell(Q(z_j))|}\frac{1}{|1-\overline{z}a|}\cong \frac{1-|b(a)|^2}{1-|a|^2}
$$
which proves the statement \emph{a)}.

We prove statement \emph{b)} by contradiction. We assume there exist points $a_n \in \mathbb{D}$ with $|b(a_n)|\to 1$, but $\sigma_b(Q(a_n))>0$. Let $0<C<1$ be a constant to be fixed later and let $\Omega_C=\lbrace z \in \mathbb{D}:\ |b(z)|<C\rbrace $. The assumption (\ref{eq1.4}) gives that $Q(a_n)\cap \Omega_C=\emptyset$ for $n$ large enough. Since spec$(b)\subseteq \overline{\Omega_C}$, we deduce also that spec$(b)\cap \overline{Q(a_n)}=\emptyset$. Hence $\sigma_b(Q(a_n))=0$ which is a contradiction.
\end{proof}

\begin{remark}\label{rem4.1}
We note that condition (\ref{eq1.3}) of Theorem \ref{theo1.3} is necessary. Indeed, $b(z)=(1+z)/2$ does not satisfy (\ref{eq1.3}), and consequently it is not one-component, while (\ref{eq1.4}) holds.
\end{remark}
\noindent
We move now to the proof of Theorem \ref{theo1.4}.
\begin{proof}[Proof of Theorem \ref{theo1.4}]
We start by proving \emph{a)}. We apply Theorem \ref{theo1.3} and obtain a constant $C>0$ such that 
$$
\bigg| \frac{1-\overline{b(a)}b(z)}{1-\overline{a}z}\bigg|\leq C \frac{1-|b(a)|^2}{1-|a|^2}\ , 
$$
for every $a,z \in \mathbb{D}$. For any $\xi \in \partial \mathbb{D}\setminus$spec$(b)$, we pick a sequence $\{a_n\}\subset \mathbb{D}$, which tends to $\xi$, and we obtain 
$$
\frac{|b(z)-b(\xi)|}{|z-\xi|}\leq C \big| b'(\xi)\big|, \, z \in \mathbb{D}\ .
$$

We move now to the proof of statement \emph{b)}. For sake of clarity, we split the argument in four claims. Since by Lemma \ref{lem2.2} the set spec$(b)$ is closed, we write $\partial \mathbb{D}\setminus$ spec$(b)=\cup_j I_j$, where $I_j$ are open arcs. Let $0<C_1 < 1 $ be a constant to be fixed later. For every $\xi \in I_j$, we choose $r(\xi)$ such that $1-r(\xi)=C_1/ |b'(\xi)|$. 

\begin{claim}\label{cl4.1}
There exists a constant $C_2 >0$ such that 
$$
\frac{1}{|b'(\xi)|}\leq C_2 \text{dist}(\xi,\text{spec}(b))\ , \quad \xi \in \partial\mathbb{D}\setminus \text{spec}(b) . 
$$
\end{claim}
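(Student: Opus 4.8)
The plan is to derive the inequality $|b'(\xi)|^{-1} \leq C_2\,\mathrm{dist}(\xi,\mathrm{spec}(b))$ by comparing the size of $b'(\xi)$ with the distance from $\xi$ to the set where $|b|$ is controlled, using hypothesis (\ref{eq1.5}) together with Lemma \ref{lem2.4}. Fix $\xi \in \partial\mathbb{D}\setminus\mathrm{spec}(b)$ and let $d = \mathrm{dist}(\xi,\mathrm{spec}(b))$; choose $\eta \in \mathrm{spec}(b)$ realizing (or nearly realizing) this distance, so $|\xi - \eta| \leq 2d$, say. The idea is to evaluate (\ref{eq1.5}) at a point $z$ near $\eta$ where $|b(z)|$ is bounded away from $1$: by Lemma \ref{lem2.4} (or rather its proof, via the connecting curve $\Gamma$ furnished by one-componentness — but here we only assume (\ref{eq1.5}), so instead I would use the hypothesis spec$(b)\subset\overline{\Omega_C}$ directly), there is a point $z_0 \in \mathbb{D}$ with $|z_0 - \eta|$ comparable to $1-|z_0|$ and $|b(z_0)| \leq C$ for the fixed constant $C < 1$ of the hypothesis.

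The key step is then to plug $z = z_0$ into (\ref{eq1.5}): since $b$ extends analytically and unimodularly across $\xi$, and $|\xi - z_0| \lesssim d$ (because $|\xi - \eta| \leq 2d$ and $|z_0 - \eta| \lesssim 1 - |z_0|$, with the scale $1-|z_0|$ chosen $\lesssim d$), we get
$$
\frac{1 - C}{|\xi - z_0|} \leq \frac{|b(\xi) - b(z_0)|}{|\xi - z_0|} \leq C\,|b'(\xi)|,
$$
so $|b'(\xi)|^{-1} \leq \frac{C}{1-C}\,|\xi - z_0| \leq C_2\, d$, which is the claim. The only subtlety is to make sure such a point $z_0$ at the right scale exists and that $|\xi - z_0| \lesssim d$: one takes $z_0$ on the radius (or a nearby curve) through $\eta$ at hyperbolic distance chosen so that $1 - |z_0| \asymp d$; since $\eta \in \mathrm{spec}(b) \subset \overline{\Omega_C}$, there are points of $\Omega_C$ arbitrarily close to $\eta$, and a Schwarz-lemma / normal-families argument (exactly as in the proof of Lemma \ref{lem2.4}) upgrades "arbitrarily close to $\eta$" to "at a definite scale comparable to $d$ with $|b| \leq C'$ for some fixed $C' < 1$."

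The main obstacle I expect is the last point: controlling $|b(z_0)|$ at a point $z_0$ whose distance to $\mathrm{spec}(b)$ is exactly of order $d$ rather than merely "small." A point of $\Omega_C$ close to $\eta$ need not sit at the correct scale relative to $\xi$; one must propagate the bound $|b| < C$ outward along a curve to a point $z_0$ with $1-|z_0| \asymp d$, and the Maximum Principle only gives $\log|b(z_0)| \leq (\log C)\,\omega(z_0,\Gamma,\mathbb{D}\setminus\Gamma)$, so one needs a lower bound on that harmonic measure — this is where Lemma \ref{lem2.3} (Hall's Lemma) enters, just as in Lemma \ref{lem2.4}. Once $z_0$ with $1-|z_0| \asymp d$ and $|b(z_0)| \leq C^{c} < 1$ is secured, the estimate above closes immediately. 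I would also remark that (\ref{eq1.5}) is used with $b(\xi)$ unimodular, so $|b(\xi) - b(z_0)| \geq |b(\xi)| - |b(z_0)| = 1 - |b(z_0)| \geq 1 - C^c > 0$, giving the needed lower bound on the numerator.
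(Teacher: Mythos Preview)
Your core computation is exactly the paper's: pick $\eta \in \mathrm{spec}(b)$, take a point $z$ close to $\eta$ with $|b(z)| \leq C$ (available because $\mathrm{spec}(b) \subset \overline{\Omega_C}$), and plug into (\ref{eq1.5}) to get $1 - C \leq |b(\xi) - b(z)| \lesssim |b'(\xi)|\,|\xi - z|$, whence $|b'(\xi)|^{-1} \lesssim |\xi - z|$.

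But the ``main obstacle'' you flag is illusory, and the fix you propose for it would be circular. There is no need for $z_0$ to sit at the particular scale $1 - |z_0| \asymp d$: the hypothesis $\mathrm{spec}(b) \subset \overline{\Omega_C}$ directly supplies a sequence $z_n \in \Omega_C$ with $z_n \to \eta$, and the displayed inequality then gives $|b'(\xi)|^{-1} \lesssim |z_n - \xi| \to |\eta - \xi|$; choosing $\eta$ nearest to $\xi$ finishes the claim in one line --- this is verbatim the paper's proof. Your proposed upgrade via the harmonic-measure argument of Lemmas~\ref{lem2.3}--\ref{lem2.4} requires a curve $\Gamma \subset \{|b| < C_1\}$ joining the origin to a neighbourhood of $\eta$, and that curve is precisely what the one-component hypothesis furnishes; here, however, we are inside the proof of part~b) of Theorem~\ref{theo1.4}, where one-componentness is the \emph{conclusion}, so that route is unavailable. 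Simply pass to the limit $z_n \to \eta$ after your displayed inequality and you are done.
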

\begin{proof}
For every $\eta \in$ spec$(b)$, we take $\{ z_n\} \subset \Omega_C$ approaching $\eta$. Since $1-C\leq |b(z_n)-b(\xi)|\lesssim |b'(\xi)||z_n -\xi|$, we deduce that
$$
\frac{1}{|b'(\xi)|}\lesssim |\eta -\xi|\ .
$$
\end{proof}
\begin{claim}\label{cl4.2}
There exist constants $C_3 , C_4 >0$ such that 
for every $\xi \in \partial \mathbb{D}\setminus $spec$(b)$ and for every  $|z-\xi|\leq C_3/|b'(\xi)|$, we have 
\begin{equation}\label{eq4.2}
|b(z)-b(\xi)-b'(\xi)(z-\xi)|\leq C_4 |z-\xi|^2|b'(\xi)|^2\ .
\end{equation}
\end{claim}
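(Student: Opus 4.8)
The plan is to reduce Claim \ref{cl4.2} to a quantitative Cauchy bound on $b''$ near $\xi$ and then read off (\ref{eq4.2}) from the integral form of Taylor's remainder. Since $\mathbb{D}$ is convex, for $z\in\mathbb{D}$ the closed segment $[\xi,z]$ lies in $\overline{\mathbb{D}}$ and meets $\partial\mathbb{D}$ only at $\xi$, so
\[
b(z)-b(\xi)-b'(\xi)(z-\xi)=(z-\xi)^{2}\int_{0}^{1}(1-t)\,b''\!\bigl(\xi+t(z-\xi)\bigr)\,dt .
\]
Hence it is enough to produce constants $c_{0},C_{0}>0$, depending only on the constant $C$ in (\ref{eq1.5}) and on the constant $C_{2}$ of Claim \ref{cl4.1}, for which $|b''(w)|\le C_{0}|b'(\xi)|^{2}$ whenever $|w-\xi|\le c_{0}/|b'(\xi)|$; then (\ref{eq4.2}) follows with $C_{3}=c_{0}/2$ and $C_{4}=C_{0}/2$.

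The bound on $b''$ rests on the analytic continuation of $b$ across the arc $I_{j}\ni\xi$. Recall that any $b\in H^{\infty}(\mathbb{D})$ with $\|b\|_{\infty}=1$ extends analytically, with unimodular boundary values, through every arc of $\partial\mathbb{D}\setminus\overline{\text{spec}(b)}$, and that $\text{spec}(b)$ is closed here by Lemma \ref{lem2.2}. By Claim \ref{cl4.1}, $\text{dist}(\xi,\text{spec}(b))\ge (C_{2}|b'(\xi)|)^{-1}$, so if $c_{0}<1/C_{2}$ the disc $D(\xi,c_{0}/|b'(\xi)|)$ stays at distance $\gtrsim |b'(\xi)|^{-1}$ from $\text{spec}(b)$ and therefore sits inside the two-sided neighbourhood of $I_{j}$ to which $b$ extends, the extension across $\partial\mathbb{D}$ being given by the Schwarz reflection $b(z)=1/\overline{b(1/\bar z)}$. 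On this disc the Lipschitz bound $|b(z)-b(\xi)|\le C'|b'(\xi)|\,|z-\xi|$ holds with $C'$ comparable to $C$: for $z\in\mathbb{D}$ it is the hypothesis (\ref{eq1.5}), and for $z\notin\overline{\mathbb{D}}$ one writes $b(z)=1/\overline{b(1/\bar z)}$ and uses $|b(\xi)|=1$, $|1/\bar z-\xi|\asymp|z-\xi|$, and $|b(1/\bar z)|\ge 1/2$ (valid once $c_{0}$ is small relative to $C$) to fall back on the interior case.

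With this the estimate on $b''$ is a standard Cauchy estimate: for $w$ with $|w-\xi|\le c_{0}/(2|b'(\xi)|)$, integrating over the circle $\gamma=\{\,|z-w|=c_{0}/(2|b'(\xi)|)\,\}\subset D(\xi,c_{0}/|b'(\xi)|)$ and using that $(z-w)^{-3}$ annihilates affine functions of $z$ on $\gamma$,
\[
b''(w)=\frac{1}{\pi i}\int_{\gamma}\frac{b(z)-b(\xi)}{(z-w)^{3}}\,dz ,
\]
so that $|b''(w)|\le 2r^{-2}\sup_{\gamma}|b(z)-b(\xi)|\lesssim |b'(\xi)|^{2}$ with $r=c_{0}/(2|b'(\xi)|)$ the radius of $\gamma$ and the implied constant $C_{0}$ depending only on $C$ and $C_{2}$. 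Substituting this into the Taylor remainder displayed in the first paragraph, with $z$ restricted to $|z-\xi|\le c_{0}/(2|b'(\xi)|)$ so that the whole segment $[\xi,z]$ lies in the disc where the bound on $b''$ is valid, yields (\ref{eq4.2}).

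I expect the only genuinely delicate step to be the bookkeeping of the second paragraph: checking that the disc $D(\xi,c_{0}/|b'(\xi)|)$ can be chosen uniformly in $\xi$ inside the domain of analyticity of $b$ — which is precisely where Claim \ref{cl4.1} is used — and that the Lipschitz bound (\ref{eq1.5}) survives the Schwarz reflection with a constant independent of $\xi$. Once that is in place, the Cauchy estimate and the Taylor expansion are entirely routine.
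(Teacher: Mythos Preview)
Your argument is correct, but it takes a different route from the paper's. The paper observes that the function
\[
g(z)=\frac{b(z)-b(\xi)-b'(\xi)(z-\xi)}{(z-\xi)^{2}}
\]
is analytic on the disc $D(\xi)=\{|z-\xi|\le C_{3}/|b'(\xi)|\}$ (the singularity at $\xi$ is removable), and on $\partial D(\xi)$ the Lipschitz hypothesis (\ref{eq1.5}) gives $|g(z)|\le(C+1)|b'(\xi)|/|z-\xi|=(C+1)|b'(\xi)|^{2}/C_{3}$; the Maximum Principle then pushes this bound inside. You instead pass through a Cauchy estimate for $b''$ and the integral Taylor remainder. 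Both approaches rest on the same two ingredients --- Claim~\ref{cl4.1} to secure a uniform disc of analyticity around $\xi$, and the extension of (\ref{eq1.5}) to that disc via the Schwarz reflection $b(z)=1/\overline{b(1/\bar z)}$ --- and you are more explicit than the paper about the reflection step, which it leaves implicit. Your approach has the side benefit of producing the bound $|b''|\lesssim|b'(\xi)|^{2}$ as an intermediate step; the paper derives that same inequality later (in the proof of Theorem~\ref{theo1.5}) by dividing (\ref{eq4.2}) by $|z-\xi|^{2}$ and letting $z\to\xi$, so in effect you are running that deduction in reverse. The paper's one-line Maximum Principle argument is shorter, but yours is equally rigorous.
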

\begin{proof}
Fix $\xi \in \partial \mathbb{D}\setminus $spec$(b)$. Consider the disc $D(\xi) = \{z \in \C : |z-\xi|\leq C_3/{|b'(\xi)|} \}$. 
Due to the previous claim, the two functions in (\ref{eq4.2}) are analytic in $D(\xi)$. Note that by assumption, estimate (\ref{eq4.2}) holds when $z \in \partial D(\xi)$ and by the Maximum Principle the estimate (\ref{eq4.2}) holds for any $z  \in D(\xi)$.
\end{proof}
\begin{claim}\label{cl4.3}
There exists a constant $0< C_5 < 1$ such that $|b(r(\xi)\xi)|\leq C_5$ 
for every $\xi \in \partial \mathbb{D}\setminus $spec$(b)$.
\end{claim}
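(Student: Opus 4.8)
The plan is to evaluate $b$ at the single point $z = r(\xi)\xi$ and combine the second-order estimate of Claim \ref{cl4.2} with the structural identity provided by Lemma \ref{lem2.6}. First I would record the input from Lemma \ref{lem2.6}: since spec$(b)$ is closed (Lemma \ref{lem2.2}), for every $\xi \in \partial\mathbb{D}\setminus$ spec$(b)$ the function $b$ extends analytically across a neighbourhood of $\xi$ with $|b(\xi)| = 1$, so Lemma \ref{lem2.6} applies, and because $|\xi| = |b(\xi)| = 1$ the quantity $\xi b'(\xi)/b(\xi)$ is a positive real number of modulus $|b'(\xi)|$; hence
$$
\xi\, b'(\xi) = |b'(\xi)|\, b(\xi) .
$$
In particular $b'(\xi) \neq 0$, so $r(\xi)$ is well defined, and together with the lower bound for $|b'(\xi)|$ coming from Claim \ref{cl4.1} this gives $r(\xi) \in (0,1)$ once $C_1$ is small enough.

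Next I would apply Claim \ref{cl4.2}. Assuming $C_1 \leq C_3$, I may take $z = r(\xi)\xi$, for which $|z - \xi| = 1 - r(\xi) = C_1/|b'(\xi)|$, and Claim \ref{cl4.2} yields
$$
\big| b(r(\xi)\xi) - b(\xi) - b'(\xi)\big(r(\xi)\xi - \xi\big)\big| \leq C_4\, (1 - r(\xi))^2 |b'(\xi)|^2 = C_4\, C_1^2 .
$$
Using the identity above I compute the linear term exactly,
$$
b'(\xi)\big(r(\xi)\xi - \xi\big) = -\big(1 - r(\xi)\big)\, \xi\, b'(\xi) = -\frac{C_1}{|b'(\xi)|}\, |b'(\xi)|\, b(\xi) = -C_1\, b(\xi) ,
$$
so $b(\xi) + b'(\xi)(r(\xi)\xi - \xi) = (1 - C_1) b(\xi)$ has modulus $1 - C_1$, and the triangle inequality gives $|b(r(\xi)\xi)| \leq 1 - C_1 + C_4\, C_1^2$.

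Finally I would fix $C_1 < \min\{C_3,\, 1/C_4,\, 1\}$, small enough also to guarantee $r(\xi) > 0$ for all $\xi$ (possible by the uniform lower bound on $|b'(\xi)|$ from Claim \ref{cl4.1}); then $C_5 := 1 - C_1 + C_4\, C_1^2$ satisfies $0 < C_5 < 1$ and works for every $\xi \in \partial\mathbb{D}\setminus$ spec$(b)$. The only point requiring care is that the constants $C_3$, $C_4$ and the lower bound for $|b'(\xi)|$ depend only on $b$ and not on $\xi$, so that a single threshold on $C_1$ works simultaneously for all such $\xi$; this uniformity is exactly what Claims \ref{cl4.1}--\ref{cl4.2} supply, so beyond this bookkeeping there is no real obstacle.
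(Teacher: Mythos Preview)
Your argument is correct and follows the same approach as the paper: both apply Claim~\ref{cl4.2} at $z=r(\xi)\xi$ and use Lemma~\ref{lem2.6} to evaluate the linear term. The only difference is presentational---the paper divides through by $b(\xi)$ and argues geometrically that $b(r(\xi)\xi)/b(\xi)$ lies in a disc $1+B$ with $B$ contained in the left half-plane (hence separated from $\partial\mathbb{D}$), whereas you compute $b'(\xi)(r(\xi)\xi-\xi)=-C_1 b(\xi)$ directly and apply the triangle inequality to obtain the explicit bound $C_5=1-C_1+C_4 C_1^2$; the two are equivalent, and your version makes the smallness requirement $C_1<1/C_4$ (needed for $C_5<1$) more transparent than the paper's phrase ``Since $C_1<1$''.
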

\begin{proof}
Since $|b(\xi)|=1$, we rewrite (\ref{eq4.2}) as
\begin{equation}\label{eq4.3}
\bigg| \frac{b(z)}{b(\xi)}-1-\frac{b'(\xi)}{b(\xi)}\xi\left(\frac{z}{\xi}-1\right)\bigg| \leq C_4 |z-\xi|^2|b'(\xi)|^2\ .
\end{equation}
We consider $z=r(\xi)\xi$. Then
$$
\bigg| \frac{b(r(\xi)\xi)}{b(\xi)}-1-\frac{b'(\xi)}{b(\xi)}\xi\left(r(\xi)-1\right)\bigg| \leq C_4 (1-r(\xi))^2|b'(\xi)|^2\ ,
$$
which implies that ${b(r(\xi)\xi)}/{b(\xi)}-1$ belongs to the disk $B$ with centre at the negative point $b'(\xi) \xi\left(r(\xi)-1\right))/b(\xi)$ with radius comparable to  $(1-r(\xi))^2|b'(\xi)|^2$. Since $C_1 < 1$, $B$ is entirely contained in the left half plane. Therefore, the point ${b(r(\xi)\xi)}/{b(\xi)}$ is contained in $1+B\subset \mathbb{D}$, which is separated from $\partial \mathbb{D}$. We deduce that there exists a constant $0< C_5 < 1$ such that $|b(r(\xi)\xi)|\leq C_5$ for every $\xi \in \partial \mathbb{D}\setminus$ spec$(b)$.
\end{proof}\noindent
The Claim \ref{cl4.3} proves that 
$$
|b|\leq C_5 \text{ on } \cup_j \left\lbrace r(\xi)\xi: \ \xi \in I_j\right\rbrace\ .
$$
Consider the domain $\Omega:=\left\lbrace r\xi:\ 0<r<1 \text{ if }\xi \in \text{spec}(b)\ , \ 0<r<r(\xi) \text{ if } \xi \in \cup_j I_j\right\rbrace$. 
By the Maximum Principle  
$$
|b|\leq \max \{C, C_5\}<1 \text{ on } \Omega . 
$$
\begin{claim}\label{cl4.4}
There exists a constant $C_6  >0 $ such that $|b(z)| > C_6$ 
for every $z \in \mathbb{D}\setminus \Omega$.
\end{claim}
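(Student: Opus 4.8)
The plan is to describe $\mathbb{D}\setminus\Omega$ concretely as a union of outer radial segments, and then bound $|b|$ from below along those segments using the second–order estimate of Claim \ref{cl4.2} together with the positivity supplied by Lemma \ref{lem2.6}.

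First I would record that, directly from the definition of $\Omega$,
$$
\mathbb{D}\setminus\Omega=\bigl\{\,r\xi:\ \xi\in\textstyle\bigcup_j I_j,\ \ r(\xi)\le r<1\,\bigr\}
$$
(up to the origin, which lies in $\overline{\Omega}$ and plays no role). Moreover, Claim \ref{cl4.1} gives $|b'(\xi)|\ge 1/(C_2\,\mathrm{dist}(\xi,\mathrm{spec}(b)))\ge 1/(2C_2)$, so $1-r(\xi)=C_1/|b'(\xi)|\le 2C_1C_2$; taking $C_1$ small we may assume $r(\xi)\in(0,1)$ for every $\xi\in\bigcup_j I_j$.

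Now fix $z=r\xi$ with $\xi\in\bigcup_j I_j$ and $r(\xi)\le r<1$. Then $|z-\xi|=1-r\le 1-r(\xi)=C_1/|b'(\xi)|$, so choosing $C_1\le C_3$ makes estimate (\ref{eq4.3}) of Claim \ref{cl4.2} available at this point. Since $\xi\notin\mathrm{spec}(b)$, the function $b$ extends analytically and is unimodular near $\xi$, so Lemma \ref{lem2.6} yields $\lambda:=\xi b'(\xi)/b(\xi)>0$, hence $|b'(\xi)|=\lambda$. Substituting $z=r\xi$ in (\ref{eq4.3}) and using $\xi(z/\xi-1)=(r-1)\xi$, the inequality becomes
$$
\Bigl|\tfrac{b(r\xi)}{b(\xi)}-\bigl(1-\lambda(1-r)\bigr)\Bigr|\le C_4\,\bigl(\lambda(1-r)\bigr)^2 .
$$
Writing $t:=\lambda(1-r)=|b'(\xi)|(1-r)\in[0,C_1]$ and recalling $|b(\xi)|=1$, this gives $|b(r\xi)|\ge 1-t-C_4t^2$, and since $s\mapsto 1-s-C_4 s^2$ decreases on $[0,\infty)$ we conclude
$$
|b(r\xi)|\ \ge\ 1-C_1-C_4C_1^2 .
$$

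It then suffices to fix $C_1$, once and for all, small enough that $1-C_1-C_4C_1^2\ge 1/2$, and to set $C_6:=1/2$. The one point requiring care is the order of choice of the constants: $C_2$ is chosen first (Claim \ref{cl4.1}), then $C_3$ and $C_4$ (Claim \ref{cl4.2}), and only afterwards $C_1$, which must be taken small relative to all of them; since $C_3$ and $C_4$ depend only on $b$ and on the constant in (\ref{eq1.5}), and \emph{not} on $C_1$, there is no circularity. I expect this bookkeeping, rather than any analytic obstacle, to be the only real subtlety.
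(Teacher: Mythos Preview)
Your argument is correct, but it is more elaborate than necessary. The paper obtains the claim in two lines directly from the hypothesis~(\ref{eq1.5}): for $z=r\xi$ with $\xi\in\bigcup_j I_j$ and $r(\xi)\le r<1$ one has $|z-\xi|=1-r\le C_1/|b'(\xi)|$, so $|b(z)-b(\xi)|\le C|b'(\xi)|\cdot C_1/|b'(\xi)|=CC_1$ and hence $|b(z)|\ge 1-CC_1$. You instead route the estimate through the second–order expansion of Claim~\ref{cl4.2} and the positivity of Lemma~\ref{lem2.6}. This works, but neither ingredient is needed here: the first–order bound~(\ref{eq1.5}) already controls $|b(z)-b(\xi)|$ without any information on the sign of $\xi b'(\xi)/b(\xi)$, and the quadratic term in~(\ref{eq4.2}) only adds a harmless $C_4C_1^2$ correction to what the linear bound gives directly. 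In effect your use of Lemma~\ref{lem2.6} recovers the inequality $|b(r\xi)|\ge 1-t-C_4t^2$, whereas dropping it and using only the triangle inequality on~(\ref{eq4.2}) would already yield $|b(r\xi)|\ge 1-t-C_4t^2$ as well (since $|\lambda(1-r)|=|b'(\xi)|(1-r)$ regardless of sign). Your bookkeeping on the order of choice of constants is accurate and applies equally to the paper's shorter argument.
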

\begin{proof}
By assumption, we know that
$$
|b(z)-b(\xi)|\leq C |b'(\xi)||z-\xi|\ . \quad z \in \D, \xi \in \partial \D \setminus  \text{spec}(b) . 
$$
Taking $0< C_1 < 1$ sufficiently small, we deduce that 
$$
|b(z)|\geq 1-C C_1 >0 \ , \quad  z \in \D \setminus \Omega . 
$$
\end{proof}
\noindent We are now ready to prove that the function $b$ is one-component. We fix a constant  $1>C_7>C_i$, $i=1, \ldots, 6$ and consider the sublevel set $\mathcal{W}:= \left\lbrace z\in \mathbb{D}:\ |b(z)|<C_7\right\rbrace$. Therefore $\Omega \subset \mathcal{W}$. We assume there exists a non empty connected component $\Omega^* $ of $ \mathcal{W} $ such that $\Omega^*\cap \Omega=\emptyset$. Then  $\Omega^*$ would be simply connected and contained in $\mathbb{D}\setminus \Omega$. Consider a conformal map $\phi:  \mathbb{D}\to \Omega^*$. Since  $\overline{\Omega^*}\cap \partial\mathbb{D}$ contains at most two points, the function $ b\circ \phi / C_7$ is inner. Claim \ref{cl4.4} gives that $|b(z)|>C_6$ and hence the inner function $ b\circ \phi / C_7$ is bounded from below. Therefore it has to be constant which is impossible. Consequently, $\mathcal{W}$ is connected, which concludes the proof.
\end{proof}
\noindent
We note that if $b$ is one-component, Theorem \ref{theo1.4} tells us that
\begin{equation}\label{eq4.7}
\sup_{z \in \mathbb{D}}  \frac{|1-\overline{b(\xi)}b(z)|}{|1-\bar{\xi}z|}  \leq C|b'(\xi)|, \quad \xi \in \partial \D \setminus \text{spec}(b) . 
\end{equation}
We now deduce the following useful estimate. 
\begin{corollary}\label{cor4.1}
Let $b\in H^\infty$ with $||b||_{\infty}=1$. Fix $0<C<1$ and consider the sublevel set $\Omega_C:=\left\lbrace z \in \mathbb{D}: |b(z)|\leq C\right\rbrace$. 
\begin{enumerate}
\renewcommand{\labelenumi}{\alph{enumi})}
\item There exist a constant $C_1 >0$  such that dist$(\xi, \Omega_C\  \cup$ spec$(b))\leq C_1/|b'(\xi)|,$ for any $\ \xi \in \partial\mathbb{D}\setminus$ spec$(b)$.
\item If $b$ is one-component and spec$(b)\subset \overline{\Omega_C}$, then there exits a constant $C_2 >0$ such that dist$(\xi, \Omega_C)\geq C_2/|b'(\xi)|,$ for any $\  \xi \in \partial\mathbb{D}\setminus$ spec$(b)$.
\end{enumerate}
\end{corollary}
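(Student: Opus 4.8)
The plan is to prove part \emph{a)} by a direct Koebe-type argument and part \emph{b)} by combining estimate \eqref{eq4.7} with a harmonic measure argument in the spirit of Lemma \ref{lem2.4}. For \emph{a)}, fix $\xi \in \partial\mathbb{D}\setminus\text{spec}(b)$, and suppose for contradiction that $\text{dist}(\xi, \Omega_C \cup \text{spec}(b))$ is much larger than $1/|b'(\xi)|$; more precisely, set $\delta = \text{dist}(\xi, \Omega_C \cup \text{spec}(b))$ and consider the disc $D = D(\xi, \delta/2)$, which is contained in $\mathbb{D}$ away from $\text{spec}(b)$ (so $b$ is analytic there) and on which $|b| > C$. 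On $D$ the function $b$ is analytic, bounded by $1$ and bounded below by $C$, so $\log b$ has a single-valued branch with $|\log b|$ bounded on $D(\xi,\delta/4)$ by a constant depending only on $C$; the Cauchy estimate then gives $|b'(\xi)| = |b(\xi)|\,|(\log b)'(\xi)| \lesssim_C 1/\delta$, i.e. $\delta \leq C_1/|b'(\xi)|$, which is the claim. (One should take a little care that $|b(\xi)| = 1$, so $b$ genuinely does not vanish on $D$ once $\delta/2$ is below $\text{dist}(\xi,\Omega_C)$.)

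For part \emph{b)}, assume $b$ is one-component and $\text{spec}(b) \subset \overline{\Omega_C}$, and fix $\xi \in \partial\mathbb{D}\setminus\text{spec}(b)$. Let $z_0 \in \Omega_C$ be a point realizing (up to a factor $2$) the distance $d := \text{dist}(\xi, \Omega_C)$, so $|b(z_0)| \leq C$ and $|z_0 - \xi| \leq 2d$. The key inequality is \eqref{eq4.7}, which, applied with $z = z_0$, gives
$$
\frac{|1 - \overline{b(\xi)}b(z_0)|}{|1 - \bar\xi z_0|} \leq C\,|b'(\xi)| .
$$
Since $|b(\xi)| = 1$ and $|b(z_0)| \leq C < 1$, the numerator is bounded below: $|1 - \overline{b(\xi)}b(z_0)| \geq 1 - |b(z_0)| \geq 1 - C$. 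The denominator satisfies $|1 - \bar\xi z_0| \leq |1-\bar\xi\xi| + |\bar\xi||\xi - z_0| = |\xi - z_0| \leq 2d$. Combining,
$$
\frac{1 - C}{2d} \leq C\,|b'(\xi)| ,
$$
which rearranges to $d \geq \frac{1-C}{2C}\cdot\frac{1}{|b'(\xi)|}$, giving the asserted lower bound with $C_2 = (1-C)/(2C)$.

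The main obstacle is a geometric subtlety in part \emph{a)}: it is not automatic that the disc $D(\xi,\delta/2)$ lies inside $\mathbb{D}$, since $\xi$ is on the boundary. The honest statement is that $b$ extends analytically across $\partial\mathbb{D}\setminus\overline{\text{spec}(b)}$ (as recalled after Lemma \ref{lem2.1}, citing \cite{FM}), and in fact across the arc of $\partial\mathbb{D}$ within distance $\sim\delta$ of $\xi$, with $|b| > C$ there; so one works in a genuine two-dimensional neighbourhood of $\xi$ in $\mathbb{C}$ on which $b$ is analytic and takes values in $\{C < |w| < 1/C\}$ (the reflected bound coming from unimodularity on the boundary arc and the reflection principle). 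On this neighbourhood the branch of $\log b$ is controlled, and the Cauchy estimate goes through; Lemma \ref{lem2.5}, which relates $|b'(\xi)|$ to $|b'(r\xi)|$ for $r$ close to $1$, can also be invoked to transfer the interior estimate to the boundary point. Part \emph{b)} is comparatively immediate once \eqref{eq4.7} is in hand.
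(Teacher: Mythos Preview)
Your argument for part \emph{b)} is exactly what the paper intends: it simply says ``immediate consequence of (\ref{eq4.7})'', and you have spelled out precisely that computation.

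For part \emph{a)} the paper gives no argument at all and refers to \cite{BFM}, so your self-contained proof is a genuine addition. Your overall strategy is correct: since $\overline{\text{spec}(b)}$ is at distance $\geq \delta$ from $\xi$ (being the closure of a set at distance $\geq\delta$), $b$ extends by Schwarz reflection to the full Euclidean disc $D(\xi,\delta/2)$ and satisfies $C<|b|<1/C$ there, whence a Cauchy estimate gives $|b'(\xi)|\lesssim_C 1/\delta$. One small imprecision: the assertion that ``$|\log b|$ is bounded on $D(\xi,\delta/4)$ by a constant depending only on $C$'' does not follow just from $C<|b|<1/C$, since that only bounds the real part of $\log b$; you would need Borel--Carath\'eodory to control the imaginary part. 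But the detour through $\log b$ is unnecessary: applying the Cauchy estimate directly to $b$ on $D(\xi,\delta/2)$ with the bound $|b|\leq 1/C$ already gives $|b'(\xi)|\leq 2/(C\delta)$, which is all you need.
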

\begin{proof}
The proof of statement \emph{a)} is contained in \cite{BFM}. On the other hand, statement \emph{b)} is an immediate consequence of (\ref{eq4.7}).
\end{proof}
\noindent
We note that when $b$ is one-component, $|b'(\xi)|$ is (uniformly) comparable to dist$(\xi, \Omega_C )$, $\xi \in \partial \mathbb{D}\setminus spec(b)$. Here $\Omega_C ={\left\lbrace z \in \mathbb{D}: \ |b(z)|\leq C\right\rbrace}  $ is the connected sublevel set of $b$. 

We are now ready to prove Theorem \ref{theo1.5}.
\begin{proof}[Proof of Theorem \ref{theo1.5}]
We start by proving statement \emph{a)}. Claim \ref{cl4.2} in the proof of Theorem \ref{theo1.4} provide constants $C_1 , C_2 >0$ such that 
$$
|b(z)-b(\xi)-b'(\xi)(z-\xi)|\leq C_1 |z-\xi|^2|b'(\xi)|^2
$$
when $\xi \in \partial \D \setminus$ spec$(b)$ and $|z-\xi|\leq C_2/|b'(\xi)|$. Dividing by $|z-\xi|^2$ and taking $z$ tending to $\xi$, we obtain 
$$
|b''(\xi)|\leq 2C_1|b'(\xi)|^2\ 
$$
for every $\xi \in \partial \D \setminus$ spec$(b)$. We note that Corollary \ref{cor4.1} tells us that for any $\eta \in \partial\left( \text{spec}(b)\right)$ we have 
$$
\lim_{\xi \to \eta, \xi \in \partial \mathbb{D}\setminus spec(b)} |b'(\xi)|=\infty\ .
$$
Therefore the function $H(\xi)$ defined in (\ref{eq1.6}) is Lipschitz.

For the proof of statement \emph{b)}, we use some ideas from the proof of Theorem \ref{theo1.4}. Due to Lemma \ref{lem2.2}, spec$(b)$ is a closed subset of $\partial \mathbb{D}$. We denote $\partial \mathbb{D}\setminus $ spec$(b) = \cup_j I_j$ and for every $\xi \in I_j$ consider $0 < r(\xi) <1$ defined by 
$$
1-r(\xi)=\frac{1}{16}\frac{1}{|b'(\xi)|}\ .
$$
Since the function $H$ is Lipschitz, there exists a constant $C>0$ such that 
$$
\bigg| \frac{1}{b'(\xi)}\bigg|\leq C|\xi-\eta|\ , \quad  \xi \in \partial \D \setminus \text{spec}(b),\ \eta \in \text{spec}(b)\ .
$$
Therefore 
$$
\bigg| \frac{1}{b'(\xi)}\bigg| \leq C \text{ dist}(\xi, \text{spec}(b))\ .
$$
Moreover, for every $t,\xi \in I_j$ we have
$$
\bigg| \frac{1}{b'(t)}-\frac{1}{b'(\xi)}\bigg| \leq C|t-\xi|
$$
which implies that
\begin{equation}\label{eq4.4}
\frac{1}{2}\bigg| \frac{1}{b'(\xi)}\bigg|\leq \bigg| \frac{1}{b'(t)}\bigg| \leq \frac{3}{2} \bigg| \frac{1}{b'(\xi)}\bigg| 
\end{equation}
when $|t-\xi|< (2C|b'(\xi)|)^{-1}$.
For $z, \xi \in I_j$,
$$
b(z)-b(\xi)-b'(\xi)(z-\xi)=\int_{\xi}^{z}b''(t)(z-t)dt \ .
$$
Since $|b''(t)|\leq C|b'(t)|^2$ for $t \notin $ spec$(b)$, we deduce that
$$
\big| b(z)-b(\xi)-b'(\xi)(z-\xi)\big| \leq C\int_{\xi}^{z}|b'(t)|^2|z-t|dt\ .
$$
Using (\ref{eq4.4}), we obtain a constant $C_1 >0$ such that 
\begin{equation}\label{eq4.5}
\big| b(z)-b(\xi)-b'(\xi)(z-\xi)\big|\leq C_1 |b'(\xi)|^2|z-\xi|^2\ , \quad z, \xi  \in I_j
\end{equation}
For sake of clarity, we split the rest of the proof in two claims.
\begin{claim}\label{cl4.4.1}
There exists $C_2<1$ such that $|b(r(\xi)\xi)|\leq C_2, \ \xi \in I_j$\ .
\end{claim}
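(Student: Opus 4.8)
\emph{Proof proposal for Claim~\ref{cl4.4.1}.} The plan is to run the argument of Claim~\ref{cl4.3} in the present setting. The only new difficulty is that the quadratic Taylor estimate \eqref{eq4.5} has so far been proved only for $z,\xi\in I_j$, i.e.\ for $z$ on the unit circle, whereas here $b$ must be evaluated at the interior point $r(\xi)\xi$. Hence the first and main step is to extend \eqref{eq4.5} so that $z$ may range over a half-disc $D^-(\xi)\subset\mathbb{D}$ centred at $\xi$. I would fix $\xi\in I_j$, a small constant $\kappa>0$ to be specified, and set $D^-(\xi):=\{z\in\mathbb{D}:\ |z-\xi|<\kappa/|b'(\xi)|\}$. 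Since $H$ is Lipschitz we have already observed that $1/|b'(\xi)|\lesssim\text{dist}(\xi,\text{spec}(b))$, so for $\kappa$ small the boundary arc $\overline{D^-(\xi)}\cap\partial\mathbb{D}$ is a compact subarc of $I_j$, across which $b$ extends analytically and is unimodular; consequently the function
\[
F(z):=\frac{b(z)-b(\xi)-b'(\xi)(z-\xi)}{(z-\xi)^2}
\]
is analytic on a neighbourhood of $\overline{D^-(\xi)}$.

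Next I would estimate $F$ on $\partial D^-(\xi)$. On the part of $\partial D^-(\xi)$ lying on $\partial\mathbb{D}$, which is within distance $\kappa/|b'(\xi)|$ of $\xi$, estimate \eqref{eq4.5} gives $|F(z)|\le C_1|b'(\xi)|^2$. On the remaining, circular part of $\partial D^-(\xi)$ one has $|z-\xi|=\kappa/|b'(\xi)|$, so the trivial bound $|b(z)-b(\xi)-b'(\xi)(z-\xi)|\le 2+\kappa$ gives $|F(z)|\le\kappa^{-2}(2+\kappa)|b'(\xi)|^2$. By the Maximum Principle, $|F(z)|\le\widetilde{C}\,|b'(\xi)|^2$ throughout $D^-(\xi)$, where $\widetilde{C}$ depends only on $\kappa$ and the Lipschitz norm of $H$. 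In other words, \eqref{eq4.5} holds, with $\widetilde{C}$ in place of $C_1$, for every $z\in D^-(\xi)$; in particular it holds for $z=r(\xi)\xi$ as soon as $\kappa>1/16$.

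To finish, I would specialise to $z=r(\xi)\xi$, exactly as in Claim~\ref{cl4.3}. Lemma~\ref{lem2.6} gives $\xi b'(\xi)/b(\xi)=|b'(\xi)|>0$, so $b'(\xi)(r(\xi)\xi-\xi)/b(\xi)=-(1-r(\xi))\,|b'(\xi)|=-1/16$. Dividing the extended estimate by $|b(\xi)|=1$ and using $|r(\xi)\xi-\xi|^2|b'(\xi)|^2=1/256$, one obtains that $b(r(\xi)\xi)/b(\xi)-1$ lies in the disc centred at the negative number $-1/16$ of radius $\widetilde{C}/256$; provided the constant $1/16$ in the definition of $r(\xi)$ is small relative to $\widetilde{C}$ --- the statement is written with $1/16$, but any sufficiently small constant works, and one also needs $\kappa$ between it and the reciprocal of the Lipschitz norm of $H$ above --- this disc lies in the left half-plane, so $b(r(\xi)\xi)/b(\xi)$ stays in a fixed disc interior to $\mathbb{D}$. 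Hence $|b(r(\xi)\xi)|\le C_2$ for some $0<C_2<1$ independent of $\xi$ and $j$. The main obstacle is the extension step of the second paragraph: there the Maximum Principle on the half-disc $D^-(\xi)$ plays the role that hypothesis \eqref{eq1.5} played, via Claim~\ref{cl4.2}, in the proof of Theorem~\ref{theo1.4}; the rest is the same bookkeeping of constants.
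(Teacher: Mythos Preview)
Your argument is correct, but it differs from the paper's proof. The paper does not extend \eqref{eq4.5} into the disc at all; instead it argues by contradiction via the Poisson integral. Assuming $|b(r(\xi_n)\xi_n)|\to 1$ along some sequence, the Poisson representation $b(r(\xi_n)\xi_n)=\int P_{r(\xi_n)\xi_n}(t)\,b(t)\,dm(t)$ together with $\|b\|_\infty=1$ forces the boundary values $b(t)$ to concentrate near the single value $b(r(\xi_n)\xi_n)$ on the arc $\{|t-\xi_n|<1-r(\xi_n)\}$, in the sense that the relative measure of the set where $|b(t)-b(r(\xi_n)\xi_n)|>\delta$ tends to $0$. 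But \eqref{eq4.5}, applied only on the unit circle, shows that on this very arc $b(t)$ sweeps out an arc of the unit circle of definite length (of order $1/16$), so a fixed proportion of the arc must satisfy $|b(t)-b(r(\xi_n)\xi_n)|>\delta$ for some fixed $\delta$, a contradiction.

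Compared with this, your route reproduces the mechanism of Claim~\ref{cl4.3} by first pushing \eqref{eq4.5} into a half-disc through the Maximum Principle, paying for the missing hypothesis \eqref{eq1.5} with the crude bound on the circular part of $\partial D^-(\xi)$. This works, but the constant $\widetilde C$ you obtain is at least $(2+\kappa)/\kappa^2$, which is large once $\kappa$ is forced below $1/\mathrm{Lip}(H)$; as you note, this generally requires shrinking the constant $1/16$ in the definition of $r(\xi)$ to some $c<1/\widetilde C$. That adjustment is harmless for the rest of the proof (Claim~\ref{cl4.5} only needs $4c<1$), so your approach goes through; in return it gives a quantitative localization of $b(r(\xi)\xi)/b(\xi)$ near $1-c$, which is more than the paper extracts. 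The paper's Poisson-concentration argument is shorter and keeps the fixed constant $1/16$, at the cost of being nonconstructive.
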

\begin{proof}
We argue by contradiction. Assume there exist $\xi_n \in I_j$ such that $|b(r(\xi_n)\xi_n)|\to 1$. Since $||b||_\infty=1$ and
$$
b(r(\xi_n)\xi_n)=\int_{\partial \D} P_{r(\xi_n)\xi_n}(t)b(t)dm(t)\ ,
$$
for any $\delta>0$, we have

$$
\frac{1}{1-r(\xi_n)} m \left\lbrace t:\ |t-\xi_n|<1-r(\xi_n) \text{ with } |b(t)-b(r(\xi_n)\xi_n)|>\delta \right\rbrace \to 0\ , \quad \text{ as } n \to \infty . 
$$
This contradicts (\ref{eq4.5}) and finishes the proof of the Claim.
\end{proof}
\noindent We consider
$$
\Omega:=\left\lbrace r\xi: \xi \in \partial\mathbb{D}, 0<r<1 \text{ if } \xi \in \text{spec}(b), 1 \geq  1-r>C_2 / |b'(\xi)| \text{ if } \xi \in \partial \D \setminus  \text{spec}(b) \right\rbrace .
$$
We note that $|b(\xi)|\leq C$ for almost every $\xi \in$ spec$(b)$. On the other hand, Claim \ref{cl4.4.1} tells us that $|b(r(\xi)\xi)|\leq C_2$ for every $\xi \in \cup_j I_j$. Then, the Maximum Principle gives 
\begin{equation}\label{eq4.6}
\sup_\Omega |b(z)|\leq \max\left( C,C_2\right) <1\ .    
\end{equation}
\begin{claim}\label{cl4.5}
We have $|b(z)|\geq 1/2$ for any $z \in \mathbb{D}\setminus \Omega$.
\end{claim}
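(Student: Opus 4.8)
The plan is to verify the bound pointwise on $\mathbb{D}\setminus\Omega$, splitting according to whether a point is close to $\partial\mathbb{D}$ or not. Write a point of $\mathbb{D}\setminus\Omega$ as $z=\rho\xi$ with $\xi=z/|z|$ (the origin is dealt with at the end). Since every radial segment over $\mathrm{spec}(b)$ lies in $\Omega$, necessarily $\xi\in\partial\mathbb{D}\setminus\mathrm{spec}(b)$, so $|b(\xi)|=1$ because $b$ extends analytically and unimodularly across $\partial\mathbb{D}\setminus\mathrm{spec}(b)$; and by the definition of $\Omega$ the point $z$ lies in $\mathbb{D}\setminus\Omega$ precisely when $\rho\ge r(\xi)$, i.e. $1-\rho\le c/|b'(\xi)|$, where $c$ is the small absolute normalising constant used in the definition of $r(\xi)$ (the value $\tfrac1{16}$ above, or any sufficiently small one).

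\emph{The case $\rho>1/2$.} Here Lemma~\ref{lem2.5} settles the matter at once: integrating along the radius and using $|b'(s\xi)|\le 4|b'(\xi)|$ for $1/2<s<1$,
$$
|b(\rho\xi)-b(\xi)|\ \le\ \int_\rho^1|b'(s\xi)|\,ds\ \le\ 4|b'(\xi)|(1-\rho)\ \le\ 4c ,
$$
so that $|b(\rho\xi)|\ge 1-4c\ge\tfrac12$ provided $c\le\tfrac18$ (with $c=\tfrac1{16}$ this already gives $|b(\rho\xi)|\ge\tfrac34$). In particular this covers the inner edge $\{r(\xi)\xi\}$ of $\Omega$ at every direction with $|b'(\xi)|\ge\tfrac18$, hence the thin parts of $\mathbb{D}\setminus\Omega$ hugging $\partial\mathbb{D}$ near $\mathrm{spec}(b)$.

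\emph{The case $\rho\le 1/2$.} Now $1-\rho\ge\tfrac12$, so $z\in\mathbb{D}\setminus\Omega$ forces $|b'(\xi)|\le 2c$. By the formula of Lemma~\ref{lem2.6}, all summands of $\xi b'(\xi)/b(\xi)$ are nonnegative and the kernels $|\xi-\,\cdot\,|^{-2}$ are $\ge\tfrac14$ on $\overline{\mathbb{D}}$, whence $\sigma_b(\overline{\mathbb{D}})\le 4|b'(\xi)|\le 8c$. In particular every zero $z_n$ of $b$ satisfies $1-|z_n|\le 8c$, so for $c$ small each $z_n$ lies at pseudohyperbolic distance bounded below from $z$, which itself has $|z|\le\tfrac12$. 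Consequently all the factors in $\log|b(z)|^{-1}$ are bounded: the outer and singular parts are Poisson integrals of $\sigma_b$ against a kernel that is at most $3$, and the Blaschke part is comparable to $\sum_n(1-|z_n|)P_z(z_n)$ because the $z_n$ are far from $z$. Hence $\log|b(z)|^{-1}\le A\,\sigma_b(\overline{\mathbb{D}})\lesssim c$ for an absolute $A$, and choosing $c$ small enough that this is $\le\log 2$ gives $|b(z)|\ge\tfrac12$. The same estimate, used with any direction $\xi$, also covers the origin whenever $0\notin\Omega$ (which can happen only when $|b'(\xi)|\le c$ for every $\xi$, so that again $\sigma_b(\overline{\mathbb{D}})=O(c)$). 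This finishes the claim.

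The routine case is $\rho>1/2$; the real content is the second case, where one must realise that a point of $\mathbb{D}\setminus\Omega$ away from $\partial\mathbb{D}$ can occur only along a direction in which $b$ is extremely flat, convert this into $\sigma_b(\overline{\mathbb{D}})=O(c)$, and then control $\log|b|^{-1}$ by this small total mass — the single delicate point being the Blaschke factor, handled via the observation that the relevant zeros are forced very close to $\partial\mathbb{D}$ and hence far from $z$. The exact constant $1/2$ is immaterial (any fixed positive lower bound suffices for the concluding connectedness argument) and is secured simply by fixing the normalising constant $c$ small enough once and for all at the start of the proof of part \emph{b)}.
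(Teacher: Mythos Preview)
Your argument is correct. For $\rho>1/2$ it coincides with the paper's proof: integrate radially and invoke Lemma~\ref{lem2.5} to obtain $|b(z)-b(\xi)|\le 4|b'(\xi)|(1-|z|)\le 4c<1/2$. The paper in fact stops there, applying Lemma~\ref{lem2.5} without remarking that its hypothesis $r>1/2$ may fail when $|z|\le 1/2$. Your second case closes this gap: if $z\in\mathbb{D}\setminus\Omega$ has $|z|\le 1/2$ then necessarily $|b'(\xi)|\le 2c$, and the nonnegative decomposition of $\xi b'(\xi)/b(\xi)$ in Lemma~\ref{lem2.6}, together with the trivial lower bound $|\xi-\cdot|^{-2}\ge 1/4$ on $\overline{\mathbb D}$, forces $\sigma_b(\overline{\mathbb{D}})\le 4|b'(\xi)|\le 8c$; from there the control of $\log|b(z)|^{-1}$ by an absolute multiple of $\sigma_b(\overline{\mathbb{D}})$ on $\{|z|\le 1/2\}$ is routine (the only point needing care, as you note, is the Blaschke factor, and $1-|z_n|\le 8c$ pushes all zeros near $\partial\mathbb D$ and hence far from $z$). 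Your observation that securing the exact value $1/2$ may require shrinking $c$ below the paper's $\tfrac{1}{16}$ is accurate and harmless, since any fixed positive lower bound on $|b|$ over $\mathbb D\setminus\Omega$ suffices for the connectedness argument that follows. In short, your proof is a careful completion of the paper's.
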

\begin{proof}
We fix $z \in \mathbb{D}\setminus \Omega$ and we consider $\xi=z/|z|$. There exists $0<r<1$ such that 
$$
|b(z)-b(\xi)| \leq |b'(r\xi)|(1-|z|)\ .
$$
We apply Lemma \ref{lem2.5} to deduce that
$$
|b(z)-b(\xi)|\leq 4 |b'(\xi)|(1-|z|)\ .
$$
Since $z \in \mathbb{D}\setminus \Omega$, we have $1-|z|<1-r(\xi)=1 /16\ |b'(\xi)|$ and  we obtain that
$$
|b(z)-b(\xi)|<1/2\ .
$$
We deduce that $|b(z)|\geq 1/2$, which proves the claim.
\end{proof}
\noindent
We can now conclude the proof of statement \emph{b)}.
We fix $1> C_3> \max \{C, C_1, C_2, 1/2 \}$ and we prove that $\Omega_{C_3} = \{z \in \D : |b(z)| < C_3\}$ is connected. We note that (\ref{eq4.6}) implies that $\Omega$ is contained in $\Omega_{C_3}$. Let $\Omega_1 \subset \mathbb{D}\setminus \Omega$ be another connected component of $\Omega_{C_3}$. Then $\Omega_1$ is simply connected and there exists $j$ such that 
$$
\Omega_1\subset \left\lbrace r\xi: \ \xi \in I_j, 1>r>r(\xi)\right\rbrace\ .
$$
Let $\phi: \mathbb{ D} \to \Omega_1$ be a conformal mapping. Since $\partial \Omega_1 \cap\partial \mathbb{D}$ contains at most two points, the function $U:=(C_3)^{-1}b\circ \phi $ is inner. However, Claim \ref{cl4.5} says that $U$ is bounded from below and this gives that $\Omega_1$ is empty, that is, $\Omega_{C_3}$ is connected. 
\end{proof}
\noindent
We finally prove Corollary \ref{cor1.6}.
\begin{proof}[Proof of Corollary \ref{cor1.6}]
We have just to prove that (\ref{eq1.6}) is equivalent to the conditions \emph{b)} and \emph{c)} of Corollary \ref{cor1.6}. Indeed, if \emph{b)} and \emph{c)} hold, the function $H$ defined in (\ref{eq1.6}) has to be Lipschitz because it has bounded derivative at almost every point of the unit circle. 

On the other hand, if $H$ is Lipschitz, then, due to Rademacher's theorem and the regularity of $b'(\xi)$ when $\xi\notin \text{spec}(b)$, we obtain condition \emph{b)}. Analogously, condition \emph{c)} follows directly from the definition of $H$. 
\end{proof}

\vspace{22 pt}\noindent
\section{Clark measures of one-component bounded function}\label{sec5}

In this section, we prove Theorem \ref{theo1.6}, characterizing Clark measures of one-component bounded functions. We need two preliminary results.
\begin{lemma}\label{lem5.1}
Let $b \in H^\infty(\mathbb{D})$ with $||b||_\infty=1$. Let $\mu$ be the Clark measure of $b$ at the value $\alpha = 1$. 
Then $P(\mu)=\overline{\text{spec}(b)}$, where $P(\mu):=$spt$(\mu)\setminus a(\mu)$.
\end{lemma}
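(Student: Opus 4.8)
The plan is to show the two set equalities $P(\mu)\subseteq\overline{\text{spec}(b)}$ and $\overline{\text{spec}(b)}\subseteq P(\mu)$ by using the identification of $P(\mu)$ with the set of points where the Poisson integral of $\mu$, namely $\Re\big((1+b(z))/(1-b(z))\big)$, fails to extend nicely to the boundary.

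\medskip

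\emph{Step 1: translating $P(\mu)$ analytically.} First I would recall that the real part of $(1+b(z))/(1-b(z))$ is the Poisson integral $P[\mu](z)$. A point $\xi\in\partial\D$ lies outside $\overline{\text{spec}(b)}$ exactly when $b$ extends analytically across an arc $J\ni\xi$ with $|b|=1$ on $J$; near such a point $(1+b)/(1-b)$ extends analytically (possibly with a pole where $b=1$, which corresponds to an isolated atom of $\mu$ sitting on $J$), so $\mu$ restricted to a subarc of $J$ is either $0$ or a finite sum of point masses at the solutions of $b(\xi)=1$ in $J$. In either case $\xi$ is not an accumulation point of $\text{spt}(\mu)$, so $\xi\notin P(\mu)$. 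This gives $P(\mu)\subseteq\overline{\text{spec}(b)}$, and in fact shows $\text{spt}(\mu)\setminus\overline{\text{spec}(b)}$ consists only of isolated atoms.

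\medskip

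\emph{Step 2: the reverse inclusion.} For the opposite inclusion I would argue by contraposition: suppose $\xi\notin P(\mu)$, i.e.\ $\xi$ has a neighbourhood arc $J$ in $\partial\D$ with $\text{spt}(\mu)\cap J$ containing at most one point, and that point (if present) is an isolated atom. If $\mu(J)=0$ on a subarc, then $P[\mu]$ is harmonic and bounded across that subarc, so $(1+b)/(1-b)$ — having bounded real part there — is a bounded analytic function near the subarc, forcing $b$ to extend analytically with $|b|=1$ on the subarc, hence that subarc misses $\text{spec}(b)$; one then needs to rule out $\xi$ being a limit of $\text{spec}(b)$ from inside $J$, which follows because the whole punctured arc carries no mass of $\mu$ and the same bounded-real-part argument applies there. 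If instead $\mu|_J = c\,\delta_{\xi_0}$ for a single atom $\xi_0\in J$, then $(1+b)/(1-b) - c\frac{\xi_0+z}{\xi_0-z}$ has bounded real part near $J$, again giving analytic continuation of $b$ across $J\setminus\{\xi_0\}$ with unimodular boundary values, and across $\xi_0$ as a meromorphic function with a simple pole; in all cases $J$ (and in particular a neighbourhood of $\xi$) misses $\text{spec}(b)$ except possibly at the single point $\xi_0$, and since $\text{spec}(b)$ is defined by $\liminf_{z\to\zeta}|b(z)|<1$ while $|b|\to 1$ near $J\setminus\{\xi_0\}$, we conclude $\xi\notin\overline{\text{spec}(b)}$.

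\medskip

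\emph{Main obstacle.} The delicate point is Step 2: one must convert the measure-theoretic hypothesis "$\xi\notin P(\mu)$" (a statement about the support of $\mu$) into the analytic conclusion "$b$ continues across a neighbourhood of $\xi$ with $|b|=1$", which requires knowing that the Poisson integral of a measure supported away from an arc is harmonic and \emph{bounded} across that arc, and then that a disc-algebra-type function with bounded real part and modulus-one boundary values on an arc is automatically analytically continuable across it (Schwarz reflection). The atom case adds the bookkeeping that a point mass contributes exactly a simple pole to $(1+b)/(1-b)$, i.e.\ a point where $b=1$, which must be checked to be consistent with $\xi_0\notin\text{spec}(b)$ (since $|b(\xi_0)|=1\ne <1$). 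Once this correspondence between local behaviour of $\mu$ and local continuation of $b$ is nailed down, both inclusions follow, and taking closures — using that $\text{spec}(b)$ need not be closed in general — yields $P(\mu)=\overline{\text{spec}(b)}$.
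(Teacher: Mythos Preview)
Your approach is essentially the same as the paper's: both directions are argued by contraposition, converting ``$\xi\notin\overline{\text{spec}(b)}$'' into analytic continuation of $b$ across an arc (so that $\mu$ there is a finite sum of atoms at the points where $b=1$), and converting ``$\xi\notin P(\mu)$'' into analytic continuation of $(1+b)/(1-b)$ across an arc disjoint from $\text{spt}(\mu)$ (or carrying a single atom), hence of $b$ with $|b|=1$ there.

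One imprecision worth fixing: in Step~2 you pass from ``$\Re\big((1+b)/(1-b)\big)$ is bounded near the arc'' to ``$(1+b)/(1-b)$ is bounded (hence $b$ extends)'', but bounded real part alone does not force an analytic function to be bounded. The paper sidesteps this: since $(1+b)/(1-b)=\int_{\partial\D}\frac{t+z}{t-z}\,d\mu(t)+iC$ and the integrand is analytic in $z$ for $t$ outside a fixed arc $J$, the Herglotz integral is \emph{directly} analytic (not merely has bounded real part) across any arc disjoint from $\text{spt}(\mu)$; its real part vanishes on $J$, so $|b|=1$ there. With that adjustment your argument is complete and matches the paper's.
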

\begin{proof}
We need to verify the two inclusions. If $\xi \notin \overline{\text{spec(b)}}$, the function $b$ extends analytically across an arc containing $\xi$. Then either $(1+b(z))/(1-b(z))$ is analytic through an arc $ I(\xi)\subset \partial \mathbb{D}$ containing $\xi$ or $b(\xi)=1$. In the first case, $\mu_a(I(\xi))=0$ and consequently, since $\mu_s$ is carried on $\{ t: b(t)=1\}\subset \partial \mathbb{D}\setminus I(\xi)$, we deduce $\mu(I(\xi))=0$, which implies that $\xi \notin $spt$(\mu)$. If, on the other hand, $b(\xi)=1$, then $\xi \in a(\mu)$. Consequently, in both the cases, $\xi \notin P(\mu)$.

Let us now assume $\xi \notin P(\mu)$. If $\xi \in a(\mu)$, then $b(z)$ has an analytic extension at a neighbourhood of $\xi$ and $b(\xi)=1$, which implies that $\xi \notin \overline{\text{spec}(b)}$. If on the other hand, $\xi \notin $spt$(\mu)$, then $(1+b(z))/(1-b(z))$ extends analytically at a neighbourhood $I(\xi)$ of $\xi$ and its real part is equal to zero at $I(\xi)$. For this reason, $|b(\xi)|=1$ at $I(\xi)$, which means that $I(\xi)\subset \partial\mathbb{D}\setminus \overline{\text{spec}(b)}$, which implies that, in both the cases, $\xi \notin \overline{\text{spec}(b)}$. 
\end{proof}

\begin{lemma}\label{lem5.2nou}
Let $\mu$ be a positive, finite Borel measure on the unit circle satisfying the assumptions b i) and b ii) in Theorem \ref{theo1.6}. Then there exists a constant $C>0$ such that  

\begin{equation}\label{eq5.4}
\bigg| \int_{\partial \mathbb{D}} \frac{d\mu(t)}{1-\bar{t}r\xi}-\int_{\{|t-\xi|>1-r\}}\frac{d\mu(t)}{1-\bar{t}\xi}\bigg|\leq C,  \quad  0<r<1 ,\ \xi \in P(\mu) .    \end{equation}
\end{lemma}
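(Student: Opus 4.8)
The plan is to estimate the difference in (\ref{eq5.4}) by splitting the domain of integration into the part near $\xi$, namely $\{|t-\xi|\le 1-r\}$, and the far part $\{|t-\xi|>1-r\}$, and to control each piece using only the hypotheses b i) and b ii). Write $I(\xi)=\{|t-\xi|\le 1-r\}$. On the near part, $1-\bar t r\xi$ does not appear in the second integral at all, so I only need to bound $\bigl|\int_{I(\xi)} (1-\bar t r\xi)^{-1}\,d\mu(t)\bigr|$. Here the elementary geometric fact is that for $t\in\partial\mathbb{D}$ one has $|1-\bar t r\xi|\gtrsim \max\{1-r,\,|t-\xi|\}$, so $|1-\bar t r\xi|^{-1}\lesssim (1-r)^{-1}$ on $I(\xi)$; combined with the growth bound $\mu(I(\xi))\le C_1 |I(\xi)|\lesssim (1-r)$ coming from b i)–b ii) (the rephrasing noted after Theorem \ref{theo1.6}: $\mu(I)\le C_1|I|$ for every arc $I$ with at least two points of the support — and if $I(\xi)$ meets the support in at most one point the estimate is even easier since the only contribution is a single atom of size $\lesssim(1-r)$ by b ii)), this gives a bound independent of $r$ and $\xi$.

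For the far part $\{|t-\xi|>1-r\}$, I subtract the two integrands pointwise: for such $t$,
$$
\Bigl|\frac{1}{1-\bar t r\xi}-\frac{1}{1-\bar t\xi}\Bigr|
=\frac{(1-r)\,|\bar t\xi|}{|1-\bar t r\xi|\,|1-\bar t\xi|}
\le \frac{1-r}{|1-\bar t r\xi|\,|1-\bar t\xi|}.
$$
Using again $|1-\bar t r\xi|\gtrsim |t-\xi|$ and $|1-\bar t\xi|\gtrsim |t-\xi|$ (valid since $\xi\in\partial\mathbb{D}$), the integrand is dominated by $(1-r)|t-\xi|^{-2}$. Thus the far-part difference is bounded by $(1-r)\int_{\{|t-\xi|>1-r\}}|t-\xi|^{-2}\,d\mu(t)$, and it remains to see this is $\lesssim 1$. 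I would handle this by a standard dyadic decomposition of $\partial\mathbb{D}\setminus I(\xi)$ into annuli $A_k=\{2^{k}(1-r)<|t-\xi|\le 2^{k+1}(1-r)\}$, $k\ge 0$: on $A_k$ the integrand is $\lesssim (1-r)\,2^{-2k}(1-r)^{-2}=2^{-2k}(1-r)^{-1}$, while $\mu(A_k)\le \mu(\{|t-\xi|\le 2^{k+1}(1-r)\})\lesssim 2^{k}(1-r)$ by the growth estimate $\mu(I)\lesssim|I|$. Hence the $k$-th term is $\lesssim 2^{-k}$, and summing the geometric series yields a constant $C$ depending only on the constant in b i)–b ii).

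I expect the only real point requiring care, rather than a routine computation, is the bookkeeping at the scale $|t-\xi|\approx 1-r$: one must make sure the arc $I(\xi)$ at which the truncation is performed is treated consistently in both integrals (the second integral excludes exactly $\{|t-\xi|\le 1-r\}$, so no boundary term is double-counted), and one must check that applying the rephrased growth condition $\mu(I)\lesssim |I|$ is legitimate even when $I$ is so small that it contains fewer than two support points — in that degenerate case the contribution is either zero (if $I$ misses $\mathrm{spt}(\mu)$) or a single atom whose mass is $\lesssim|I|$ by the two-neighbours estimate in b ii), so the bound persists. Everything else is the two elementary inequalities $|1-\bar t r\xi|\gtrsim\max\{1-r,|t-\xi|\}$ and $|1-\bar t\xi|\gtrsim|t-\xi|$ together with the dyadic sum. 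Note that $\xi\in P(\mu)=\overline{\mathrm{spec}(b)}$ (by Lemma \ref{lem5.1}) is not actually used beyond $\xi\in\partial\mathbb{D}$; the statement is purely about the measure $\mu$.
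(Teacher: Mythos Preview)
Your argument is correct and follows essentially the same route as the paper's proof: split into the near part $\{|t-\xi|\le 1-r\}$ and the far part, bound the near contribution by $\mu(I(\xi))/(1-r)\lesssim 1$, and handle the far part via the pointwise difference and a dyadic annulus decomposition summing a geometric series---this is exactly the paper's (\ref{eq5.5})--(\ref{eq5.7}). One small correction to your closing remarks: the hypothesis $\xi\in P(\mu)$ \emph{is} used, since it guarantees that every arc centred at $\xi$ contains infinitely many support points and hence the growth bound $\mu(I)\lesssim|I|$ applies directly; your degenerate-case claim that a single atom $\eta$ in a small arc $I$ has mass $\lesssim|I|$ is not correct in general, because $|\eta-\eta_\pm|$ can be much larger than $|I|$.
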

\begin{proof}
In order to obtain (\ref{eq5.4}), it is sufficient to prove that that there exists a constant $C_1 >0$ such that 
\begin{equation}\label{eq5.5}
    \bigg| \int_{\{|t-\xi|>1-r \} } \left( \frac{1}{1-\bar{t}r\xi}-\frac{1}{1-\bar{t}\xi}\right)d\mu(t)\bigg|+ \bigg|\int_{\{|t-\xi|<1-r\}}\frac{d\mu(t)}{1-\bar{t}r\xi}\bigg|\leq C_1\ .  
\end{equation}
For $ n \geq 0$ let $I(n)$ be the arc centered at $\xi$ of measure $2^{n+1} (1-r)$. We note that there exists a constant $C_2 >0$ such that
\begin{align}
\label{eq5.6}    \int_{\{|t-\xi|>1-r\}} \frac{1-r}{|1-\bar{t}r\xi||1-\bar{t}\xi|}d \mu(t) \leq& \sum_{n=0}^\infty \int_{I(n+1) \setminus I(n)} \frac{1-r}{2^{2n}(1-r)^2}d\mu(t)\\
\nonumber =& \sum_{n=0}^{\infty} \frac{\mu (I(n+1) \setminus I(n)) }{2^{2n}(1-r)}\leq C_2\ ,
\end{align}
because by \emph{b.i)} and \emph{b.ii)}, $\mu(I)\lesssim |I|$ if $I$ contains two different points of spt$(\mu)$. Similarly there exists a constant $C_3 >0$ such that 
\begin{equation}\label{eq5.7}
    \int_{\{|t-\xi| < 1-r\}}\frac{d\mu(t)}{|1-\bar{t}r\xi|}\leq \frac{1}{1-r}\mu\left( \{t:\ |t-\xi|<1-r\}\right)\leq C_3\ .
\end{equation}
\noindent
Consequently, (\ref{eq5.6}) and (\ref{eq5.7}) imply (\ref{eq5.5}), from which estimate (\ref{eq5.4}) follows.
\end{proof}

\noindent We are now ready to prove Theorem \ref{theo1.6}. Even if some parts of this proof follow the ideas of the analogue version for inner functions in \cite{Be}, for the sake of completeness we have decided to present them here again.

\begin{proof}[Proof of Theorem \ref{theo1.6}]
We start by proving that \emph{a)} implies \emph{b)}. We assume that there exists a one-component bounded function $b$ such that
$$
\frac{1+b(z)}{1-b(z)}=\int_{\partial \mathbb{D}} \frac{\xi+z}{\xi-z}d\mu(\xi)+2i\frac{\Im\left( b(0)\right)}{|1-b(0)|^2}\ , \quad z \in \mathbb{D} . 
$$
Since spec$(b)$ is closed due to Lemma \ref{lem2.2}, Lemma \ref{lem5.1} gives that $P(\mu)=$spec$(b)$. First we note that spt$(\mu_a) \subset \text{spec} (b)$. Indeed, if $I$ is an arc contained in $\partial \mathbb{D}\setminus$ spec$(b)$, then $b$ extends analytically through $I$ and $|b|=1$ on $I$. Therefore the real part of $(1+b(r\xi))/(1-b(r\xi))$ tends to $0$ as $r$ tends to $1$, for any $\xi \in I$. Hence  $\mu_a(I)=0$.

Due to Lemma \ref{lem2.4}, there exists a constant $0<C<1$ such that 
$$
\text{spec}(b)=\left\lbrace \xi \in \partial \mathbb{D}: \ \limsup_{r \to 1} |b(r\xi)|\leq C\right\rbrace. 
$$
Therefore there exists a constant $C_1>0$, such that 
$$
1/C_1\leq \Re\left( \frac{1+b(r\xi)}{1-b(r\xi)}\right)=\int_{\partial \mathbb{D}}\frac{1-r^2}{|t-r\xi|}d\mu(t)\leq C_1\ ,
$$
when $1>r>r_0(\xi)$ and $\xi \in $ spec$(b)$. Hence
$$
1/C_1\leq g(\xi)\leq C_1 
$$
for $m$-almost every $\xi \in $ spec$(b)$. Consequently, $\mu$ satisfies \emph{b.i)}.

\vspace{11 pt}\noindent
Since $\limsup_{r \to 1}|b(r\xi)|\leq C<1 $ for any $\xi \in $ spec$(b) = P(\mu)$, we deduce
$$
P(\mu)\cap \left\lbrace \xi \in \partial \mathbb{D}:\ \limsup_{r \to 1} \Re b(r\xi)=1\right\rbrace=\emptyset. 
$$
Since the singular measure $\mu_s$ is carried by the set $\left\lbrace \xi \in \partial \mathbb{D}:\ \lim_{r \to 1} b(r\xi)=1\right\rbrace$, we deduce that $\mu_s(P(\mu))=0$, which implies that $\mu_s$ is purely atomic.

Let $I=(\alpha,\beta)$ be a connected component of $\partial \mathbb{D}\setminus P(\mu)$. Since spec$(b)=P(\mu)$, the function $b$ extends analytically through $I$. Hence, the set $a(\mu):=\left\lbrace \xi \in I:\ b(\xi)=1\right\rbrace$ is discrete. Corollary \ref{cor4.1} says that 
$$
|b'(\xi)|^{-1}\cong \text{dist}\left( \xi, \{ z \in \mathbb{D}:\ |b(z)|\leq C\}\right)
$$
when $\xi \in \partial \mathbb{D}\setminus $spec$(b)$. Hence 
$$
\lim_{I\ni \xi \to \alpha} |b'(\xi)|=\infty. 
$$
Since, due to Lemma \ref{lem2.6}, $\xi b'(\xi) b(\xi)^{-1}\geq 0$, when $\xi \in I$, we deduce that 
$$
\lim_{I\ni \xi \to \alpha} \frac{\xi b'(\xi)}{b(\xi)}=+\infty \ .
$$
Therefore, there exist infinitely many different points $\xi_k \in I$, such that $\xi_k \to \alpha$ and $b(\xi_k)=1$. A similar argument replacing $\alpha$ by $\beta$ shows that the set $a(\mu)\cap I$ is infinite and it accumulates to both $\alpha$ and $\beta$. We deduce that every $\xi \in a(\mu)$ has two neighbours. 

In order to prove that $C^{-1}|\xi-\xi_\pm|\leq \mu \{\xi \} \leq C|\xi-\xi_\pm|$ when $\xi \in a(\mu)$ and $\xi_\pm$ are its neighbours, we apply an argument due to A. Baranov and K. Dyakonov \cite{BD}. Fix $\xi \in a (\mu)$.  We pick $t \in (\xi,\xi_+)$ such that $b(t)=-1$. Then, using Theorem \ref{theo1.4}, we get
$$
|\xi_+ -\xi|>|t-\xi|\gtrsim|b'(\xi)|^{-1}|b(t)-b(\xi)| =  2|b'(\xi)|^{-1}\ .
$$
Hence $\mu \{\xi\} \lesssim |\xi - \xi_+|$. Let $N$ be a  positive integer to be fixed later. We split the arc $(\xi,\xi_+)$ into $N$ pairwise disjoints sub-arcs $J_k=(\alpha_k,\beta_k)$, with $k=1, \dots, N,$ such that
$$
\text{Arg}(b(\beta_k))-\text{Arg}(b(\alpha_k))=\frac{2\pi}{N}\ , \quad k=1, \ldots, N . 
$$
Let $s_k \in (\alpha_k,\beta_k)$, such that $|b'(s_k)|=\min\left\lbrace |b'(s)|:\ s \in [\alpha_k,\beta_k]\right\rbrace$. Then
\begin{equation}\label{eq5.1}
    \frac{2\pi}{N}=\int_{\alpha_k}^{\beta_k}\frac{sb'(s)}{b(s)}dm(s)\geq |b'(s_k)||\beta_k-\alpha_k|\ ,   \quad k=1, \ldots, N . 
\end{equation}
when $k=1,\dots, N$. Due to Corollary \ref{cor4.1},
\begin{equation}\label{eq5.2}
    |\beta_k-\alpha_k|\leq C_1\frac{2\pi}{N}\text{dist}\left( s_k, \{ z \in \mathbb{D}: |b(z)|\leq C\}\right)\  , \quad k=1, \ldots, N . 
\end{equation}
We now take $t \in (\alpha_k,\beta_k)$ and observe that $$
|b'(t)|=\int_{\overline{\mathbb{D}}}\frac{1}{|1-\bar{t}z|^2}d\sigma_b(z)\ .
$$
We note that 
$
|t-z|\geq |s_k-z|-|s_k-t|\geq |s_k-z|-|\beta_k-\alpha_k|\ .
$
Consequently, if we choose $N>4C_1\pi$, (\ref{eq5.2}) gives us that
$$
|t-z|\geq\frac{1}{2}|s_k-z|\, \quad \text{ if } |b(z)| \leq C .  
$$
Since spt$(\sigma_b)\subset \overline{\left\lbrace z \in \mathbb{D}:\ |b(z)|\leq C\right\rbrace}$, we deduce
$$
|b'(t)|\leq 4|b'(s_k)|
$$
and, by using (\ref{eq5.1}), we deduce that
$$
|\xi_+-\xi|\lesssim \sum_{k=1}^{N}|\beta_k-\alpha_k|\leq C(N) \frac{1}{|b'(\xi)|}\ .
$$
This finishes the proof of \emph{b.ii)}. 

\vspace{11 pt}\noindent 
Finally we prove  \emph{b.iii}). An easy calculation (see \cite{CMR}), shows that 
\begin{equation}\label{eq5.3}
\frac{1}{1-b(z)}=\int_{\partial \mathbb{D}}\frac{d\mu(t)}{1-\bar{t}z}-\frac{||\mu||-1}{2}-i\frac{\Im \left(b(0)\right)}{|1-b(0)|^2} , \quad z \in \D . 
\end{equation}

Using (\ref{eq5.3}), (\ref{eq5.4}) and the fact that $\limsup_{r\to 1}|b(r\xi)|\leq C<1$ for $\xi \in $ spec$(b)$, we obtain that $H^*(\mu)(\xi)$ is bounded when $\xi \in P(\mu)$. 

On the other hand, if $\xi \in a(\mu)$, we have
\begin{align*}
    H^*(\mu)(\xi)=&\lim_{z \to \xi} \left( \frac{1}{1-b(z)}-\frac{1}{b'(\xi)(1-\bar{\xi}z)}\right)+C\\
    =&\lim_{z \to \xi}  \frac{1}{(1-\bar{\xi}z)}\left( \frac{1-\bar{\xi}z}{1-b(z)}-\frac{1}{b'(z)}\right)+C\\
    =& -\frac{b''(\xi)}{b'(\xi)^2}+C\ ,
\end{align*}
which is uniformly bounded due Corollary \ref{cor1.6}. This estimate finishes the proof of \emph{b.iii)} and, consequently, of \emph{b)}.

\vspace{22 pt}\noindent
Let us now show that \emph{b)} implies \emph{a)}. Given the measure $\mu$ satisfying the assumption in \emph{b)}, we need to show that the function $b$ defined as
\begin{equation}\label{eq5.7.1}
\frac{1+b(z)}{1-b(z)}=\int_{\partial \mathbb{D}}\frac{t+z}{t-z}d\mu(t)\, \quad     z \in \mathbb{D} 
\end{equation}
is one-component. The main ingredient will be Corollary \ref{cor1.6}. We start by showing the first condition in  Corollary \ref{cor1.6}. Let $\xi \in$ spec$(b)$. If $\xi$ lies in the spectrum of the inner factor of $b$ then $\xi \in \overline{ \{z \in \D : |b(z)| < \varepsilon \}}$, for any $\varepsilon >0$. If $\xi \in \text{spt} (\mu_a)$ we will show that there exists a constant $C<1$ such that
$$
 \limsup_{r \to 1}|b(r \xi)|\leq C.
$$
We note the (\ref{eq5.7.1}) gives
\begin{equation}\label{eq5.8}
    \frac{1-|b(r\xi)|^2}{|1-b(r\xi)|^2}=\int_{\partial \mathbb{D}} \frac{1-r^2}{|t-r\xi|^2}d\mu(t)\geq \frac{\mu\left(\{ t:\ |t-\xi|<1-r\}\right)}{4(1-r)}\geq C_1\ ,
\end{equation}
according to assumptions \emph{b.i)} and \emph{b.ii)}. On the other hand, \emph{b.iii)} and (\ref{eq5.4}) give that there exists a constant $C_2 >0$ such that
\begin{equation}\label{eq5.9}
    \limsup_{r\to 1}\bigg| \int_{\partial\mathbb{D}}\frac{d\mu(t)}{1-\bar{t}r\xi}\bigg|\leq C_2\ .
\end{equation}
Hence (\ref{eq5.3}), (\ref{eq5.8}) and (\ref{eq5.9}) provide a constant $0<C_3 < 1$ such that 
$$
\limsup_{r\to 1}|b(r\xi)|\leq C_3 \ .
$$
From this it follows that the function $b$ satisfies Condition $a)$ of Corollary \ref{cor1.6}. Next we will show that condition (b) in Corollary \ref{cor1.6} holds.  We need the following preliminary result. 
\begin{lemma}\label{lem5.2}
Let $\mu$ be a Borel measure which satisfies condition \emph{b)} of Theorem \ref{theo1.6} and let $\xi_0 \in a(\mu)$. Let $k>0$ be a constant sufficiently small compared with the constant $C=C(\mu)$ appearing in condition \emph{b)} of Theorem \ref{theo1.6}. We denote 
\begin{equation}\label{eq5.10}
    D_{\xi_0}(k):=\left\lbrace z \in \mathbb{C}:\ |z-\xi_0|<k\mu(\xi_0)\right\rbrace\ .
\end{equation}
Then, the function $b$ defined in (\ref{eq5.7.1}) has an analytic extension at $D_{\xi_0}(k)$ and 
\begin{equation}\label{eq5.11}
    \bigg| \frac{1-b(z)}{\xi_0-z}\bigg|\cong \frac{1}{\mu(\xi_0)}\ ,  \quad   z \in D_{\xi_0}(k) .
\end{equation}
\end{lemma}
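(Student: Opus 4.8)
The plan is to isolate, in the Cauchy-type representation of $1/(1-b)$, the contribution of the atom at $\xi_0$: on the tiny disc $D_{\xi_0}(k)$ this term dominates, while the rest is holomorphic and bounded by a constant depending only on $C(\mu)$. Concretely, since $b(0)$ is real for the function defined by (\ref{eq5.7.1}), formula (\ref{eq5.3}) reads
$$\frac{1}{1-b(z)}=\frac{\mu(\xi_0)}{1-\overline{\xi_0}z}+R(z)-\frac{\|\mu\|-1}{2},\qquad R(z):=\int_{\partial\D\setminus\{\xi_0\}}\frac{d\mu(t)}{1-\overline t z},\qquad z\in\D .$$
Because $\xi_0\in a(\mu)$ is an isolated atom, assumption \emph{b.ii)} (two neighbours, with $\mu(\xi_0)\cong|\xi_0-\xi_\pm|$) gives that $\text{dist}(\xi_0,\text{spt}(\mu)\setminus\{\xi_0\})\cong\mu(\xi_0)$; hence for $k$ small compared with $C(\mu)$ the disc $D_{\xi_0}(k)$ meets $\text{spt}(\mu)$ only at $\xi_0$. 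Then $R$ is the Cauchy transform of the finite measure $\mu$ restricted to $\partial\D\setminus\{\xi_0\}$, whose support stays at positive distance from $D_{\xi_0}(k)$, so $R$ is holomorphic on $D_{\xi_0}(k)$; and since $z\mapsto\mu(\xi_0)/(1-\overline{\xi_0}z)$ has only a simple pole there, the identity above extends $1/(1-b)$ to a meromorphic function on $D_{\xi_0}(k)$. The analytic extension of $b$ will then follow once we know this function has no zeros on $D_{\xi_0}(k)$.

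The core estimate is a uniform bound for $R$ on $D_{\xi_0}(k)$. Since $\xi_0$ is isolated in $\text{spt}(\mu)$, for $\varepsilon$ below $\text{dist}(\xi_0,\text{spt}(\mu)\setminus\{\xi_0\})$ the truncated integral defining $H^*(\mu)(\xi_0)$ equals $R(\xi_0)$, so \emph{b.iii)} gives $|R(\xi_0)|\le C(\mu)$. For the oscillation, for $z\in D_{\xi_0}(k)$ one has $|t-z|\ge|t-\xi_0|/2$ for every $t\in\text{spt}(\mu)\setminus\{\xi_0\}$ (as $|z-\xi_0|<k\mu(\xi_0)$ is a small multiple of $|t-\xi_0|$), so
$$|R(z)-R(\xi_0)|\le|z-\xi_0|\int_{\partial\D\setminus\{\xi_0\}}\frac{d\mu(t)}{|t-z|\,|t-\xi_0|}\le 2|z-\xi_0|\int_{\partial\D\setminus\{\xi_0\}}\frac{d\mu(t)}{|t-\xi_0|^2}.$$
A dyadic decomposition of $\partial\D$ into annuli around $\xi_0$ of radii $2^n\,\text{dist}(\xi_0,\text{spt}(\mu)\setminus\{\xi_0\})$, combined with the Carleson-type bound $\mu(I)\lesssim|I|$ for arcs $I$ carrying at least two points of $\text{spt}(\mu)$ (the remark after Theorem \ref{theo1.6}, a consequence of \emph{b.i)} and \emph{b.ii)}), bounds the last integral by a constant depending only on $C(\mu)$ times $1/\mu(\xi_0)$. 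Hence $|R(z)-R(\xi_0)|\lesssim k$, and $R$ is bounded on $D_{\xi_0}(k)$ by a constant depending only on $C(\mu)$ (also using $\|\mu\|\lesssim 1$, which follows from the same Carleson bound).

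Finally, multiplying the displayed identity by $\xi_0-z$ and using $1-\overline{\xi_0}z=\overline{\xi_0}(\xi_0-z)$ yields
$$\frac{\xi_0-z}{1-b(z)}=\mu(\xi_0)\,\xi_0+(\xi_0-z)\Big(R(z)-\frac{\|\mu\|-1}{2}\Big).$$
The first summand has modulus $\mu(\xi_0)$, and the second is at most $|\xi_0-z|$ times a constant depending only on $C(\mu)$, hence at most $\tfrac12\mu(\xi_0)$ once $k$ is small enough; so the right-hand side lies between $\mu(\xi_0)/2$ and $3\mu(\xi_0)/2$. This shows at once that $1/(1-b)$ has no zero on $D_{\xi_0}(k)$, so $b$ extends analytically there, and that the comparability (\ref{eq5.11}) holds. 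I expect the main obstacle to be precisely this uniform control of the remainder $R$ near $\xi_0$ — combining \emph{b.iii)} at the single point $\xi_0$ with the dyadic Carleson estimate for its oscillation; everything else is the algebra of extracting the dominant atomic term.
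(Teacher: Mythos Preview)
Your proof is correct and follows essentially the same approach as the paper: isolate the atomic term at $\xi_0$ in the Cauchy representation (\ref{eq5.3}), bound the remaining integral at $\xi_0$ via \emph{b.iii)}, control its oscillation on $D_{\xi_0}(k)$ using $|t-z|\ge\tfrac12|t-\xi_0|$ together with the Carleson-type estimate $\int_{\partial\D\setminus\{\xi_0\}}|t-\xi_0|^{-2}\,d\mu(t)\cong 1/\mu(\xi_0)$, and then multiply through by $\xi_0-z$ to obtain (\ref{eq5.11}). The only cosmetic difference is that you are a bit more explicit about why the analytic extension exists (checking that $1/(1-b)$ is zero-free on $D_{\xi_0}(k)$), which the paper leaves implicit.
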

\begin{proof}[Proof of Lemma \ref{lem5.2}]
We know that there exists a constant $C_1$ such that 
\begin{equation}\label{eq5.12}
    \frac{1}{1-b(z)}=\int_{\partial \mathbb{D}\setminus \{\xi_0\}} \frac{d\mu(t)}{1-\bar{t}z}+\frac{\mu(\xi_0)}{1-\bar{\xi_0}z}+C_1\ , \quad z \in \D . 
\end{equation}
We note that 
\begin{align*}
    \bigg| \int_{\partial \mathbb{D}\setminus \{\xi_0\}} \frac{d\mu(t)}{1-\bar{t}z}\bigg| &=\bigg| \int_{\partial \mathbb{D}\setminus \{\xi_0\}}\frac{1-\bar{t}\xi_0}{1-\bar{t}z}\frac{d\mu(t)}{1-\bar{t}\xi_0} \bigg| \\
    &\leq \bigg|\int_{\partial \mathbb{D}\setminus \{\xi_0\}} \frac{d\mu(t)}{1-\bar{t}\xi_0}\bigg|+\bigg| \int_{\partial \mathbb{D}\setminus \{\xi_0\}} \frac{\bar{t}(z-\xi_0)}{(1-\bar{t}z)(1-\bar{t}\xi_0)}d\mu(t)\bigg|\ .
\end{align*}
Due to \emph{b.iii)}, the first integral is uniformly bounded. The second integral is estimated as follows,   
$$
\bigg| \int_{\partial \mathbb{D}\setminus \{\xi_0\}} \frac{\bar{t}(z-\xi_0)}{(1-\bar{t}z)(1-\bar{t}\xi_0)}d\mu(t)\bigg|\leq |z-\xi_0|\int_{\partial\mathbb{D}\setminus \{\xi_0\}}\frac{d\mu(t)}{|1-\bar{t}z||1-\bar{t}\xi_0|} . 
$$
Since $|z-\xi_0|\leq k\mu(\xi_0)$ and $|t-\xi_0|\geq |\xi_{\pm}-\xi_0|\cong{\mu(\xi_0)}$, we deduce that $|t-z|\geq \frac{1}{2}|t-\xi_0|$ if $k$ is sufficiently small. Hence 
$$
|z-\xi_0|\int_{\partial\mathbb{D}\setminus \{\xi_0\}}\frac{d\mu(t)}{|1-\bar{t}z||1-\bar{t}\xi_0|}\leq 2 |z-\xi_0|\int_{\partial\mathbb{D}\setminus \{\xi_0\}}\frac{d\mu(t)}{|1-\bar{t}\xi_0|^2}
$$
Using the fact that $\mu(I)$ is comparable to $|I|$ when the arc $I$ contains two different points of spt$(\mu)$, we obtain that
$$
\int_{\partial \mathbb{D}\setminus \{\xi_0\}}\frac{d\mu(t)}{|t-\xi_0|^2}\cong \frac{1}{|\xi_+-\xi_0|}\ .
$$
Consequently there exists a constant $C_2 >0$ such that 
$$
\bigg| \int_{\partial \mathbb{D}\setminus \{\xi_0\}} \frac{\bar{t}(z-\xi_0)}{(1-\bar{t}z)(1-\bar{t}\xi_0)}d\mu(t)\bigg|\leq C_2 \ .
$$
At this point, identity (\ref{eq5.12}) implies that there exits a constant $C_3 >0$ such that 
$$
\bigg| \frac{\xi_0-z}{1-b(z)}-\mu(\xi_0)\bar{\xi_0}\bigg| \leq C_3 |\xi_0-z|\ .
$$
By taking $k$ sufficiently small so that $C_3|\xi_0-z|\leq C_3k\mu(\xi_0)\leq 1/2\  \mu(\xi_0)$, we deduce that
$$
\bigg| \frac{\xi_0-z}{1-b(z)}-\mu(\xi_0)\bar{\xi_0}\bigg| \leq \frac{1}{2}\mu(\xi_0)\ ,
$$
for every $z \in D_{\xi_0}(k)$, which proves the Lemma.
\end{proof}
\noindent
We can now check condition (b) of Corollary 1.6. The identity (\ref{eq5.7.1}) gives 
\begin{equation}\label{eq5.13}
\frac{b'(z)}{(1-b(z))^2}=\int_{\partial\mathbb{D}} \frac{ \bar{\xi}}{(1-\bar{\xi}z)^2}d\mu(\xi) , \quad z \in \D
\end{equation}
and
\begin{equation}\label{eq5.14}
\frac{b''(z)}{(1-b(z))^2}+\frac{{b'(z)}^2}{(1-b(z))^3}=2\int_{\partial\mathbb{D}} \frac{ \bar{\xi}^2}{(1-\bar{\xi}z)^3}d\mu(\xi)\  , \quad z \in \D.
\end{equation}
Let $\xi_0 \in a (\mu)$. By Lemma \ref{lem5.2}, 
$$
|1-b(z)|\gtrsim 1
$$
when $z \in \partial D_{\xi_0}(k)$. Hence, (\ref{eq5.13}) gives that
$$
|b'(z)|\lesssim \frac{\mu(\xi_0)}{|1-\bar{\xi_0}z|^2}+\int_{\partial\mathbb{D}\setminus\{\xi_0\}}\frac{d\mu(\xi)}{|1-\bar{\xi} z|^2}
$$
when $z \in \partial D_{\xi_0}(k)$. Since $\mu(I)\cong |I|$ if $I$ is an arc containg two points of spt$(\mu)$, we deduce that the last integral can be estimated by $|\xi_0-\xi_+|^{-1 } \cong 1/ \mu(\xi_0)$. Therefore
$$
|b'(z)|\lesssim \frac{1}{\mu(\xi_0)}
$$
when $z \in \partial D_{\xi_0}(k)$.
Now (\ref{eq5.14}) implies that 
$$
|b''(z)|\lesssim |b'(z)|^2+\frac{2\mu(\xi_0)}{|1-\bar{\xi_0}z|^3}+2\int_{\partial \mathbb{D}\setminus \{\xi_0\}}\frac{d\mu(\xi)}{|1-\bar{\xi}z|^3}\ , \quad z \in \partial D_{\xi_0}(k). 
$$
Acting as before, we obtain that 
$$
|b''(z)|\lesssim \frac{1}{\mu(\xi_0)^2}
$$
when $z \in \partial D_{\xi_0}(k)$, and hence, by Maximum Principle
$$
|b''(z)|\lesssim |b'(\xi_0)|^2
$$
for any $z \in D_{\xi_0}(k)$. Now identity (\ref{eq5.13}) gives 
$$
|b'(z)|=|1-b(z)|^2\bigg| \frac{\bar{\xi_0}}{(1-z\bar{\xi_0})^2}\mu(\xi_0)+\int_{\partial\mathbb{D}\setminus \{\xi_0\}} \frac{\bar{\xi}}{(1-\bar{\xi}z)^2}d\mu(\xi)\bigg|\ ,
$$
for any $z \in \partial \mathbb{D} \setminus \text{spec} (b)$. Applying Lemma \ref{lem5.2}, we deduce 
\begin{equation}\label{eq5.15}
|b'(z)| \gtrsim  \frac{\mu(\xi_0)}{|1-\bar{\xi_0}z|^2}|1-b(z)|^2\cong \frac{1}{\mu(\xi_0)}
\end{equation}
when $z \in D_{\xi_0}(k)$. Consequently
\begin{equation}\label{eq5.16}
|b''(z)|\leq |b'(z)|^2
\end{equation}
for any $z \in D_{\xi_0}(k)$. 

We need to obtain the same estimate also for the other points $z \in \partial\mathbb{D}\setminus\left( \text{spec}(b)  \cup_{a(\mu)}D_\xi(k)\right)$. We start by showing that
\begin{equation}\label{eq5.17}
|b(z)-1|>k
\end{equation}
for every $z \in \partial\mathbb{D}\setminus\left( \text{spec}(b)  \cup_{a(\mu)}D_\xi(k)\right)$. Indeed, let $z \in (\xi_-,\xi_+)$ where $\xi_\pm$ are neighbour atoms of $\mu$. We note that, due to Lemma \ref{lem5.2}, $|b(w)-1|\cong k$ if $w \in \partial D_{\xi_{\pm}}(k)\cap \partial\mathbb{D}$. Since Arg$(b)$ is increasing for $t \in (\xi_-,\xi_+)$, estimate (\ref{eq5.17}) follows. Therefore if $z \notin \cup_{a(\mu)}D_\xi(k) $, we have $|z-\xi_{\pm}|\geq k\mu(\xi_\pm)$. On the other hand, $|z-\xi_\pm|\leq |\xi_+-\xi_-|\cong\mu(\xi_{\pm})$. Therefore
\begin{equation}\label{eq5.18}
k\mu(\xi_\pm)\leq |z-\xi_\pm|\leq C\mu(\xi_\pm)\ .
\end{equation}
Applying the previous argument with $\xi_0$ replaced by $\xi_\pm$, we deduce that $|b''(z)|\lesssim |b'(z)|^2$. This finishes the proof of property \emph{b)} of Corollary \ref{cor1.6}. Finally we need to verify that $|b'(z)|\to \infty$ as $0<$dist$(z,\text{spec}(b))\to 0$. We note that (\ref{eq5.15}) and assumption \emph{b.ii)} gives it for $z \in \cup_{a(\mu)}D_{\xi}(k)$. For $z \notin \cup_{a(\mu)}D_{\xi}(k) $, let, as before, $z \in (\xi_-,\xi_+)$ where $\xi_\pm$ are neighbour atoms of $\mu$. Then  (\ref{eq5.17}), (\ref{eq5.18}) and (\ref{eq5.15}) give that $|b'(z)|\gtrsim |b'(\xi_{\pm})|$ which, since dist$(z,\text{spec}(b))\to 0$, finishes the proof.
\end{proof}

\begin{remark}
We note that if $b$ is an inner function, Theorem \ref{theo1.6} reduces to the result of \cite{Be} on Clark measures of one-component inner functions. Indeed, in this case, the absolute continuous part of the measure $\mu$ would be identically zero and the Hilbert transform $H^*(\mu)$ would be equal to
$$
H(\mu)(\xi):=\int_{\partial \mathbb{D}\setminus \{\xi\}} \frac{d\mu(t)}{1-\bar{t}\xi}\ ,
$$
for any $\xi \in a(\mu)$.
\end{remark}

\vspace{22 pt}\noindent

\Addresses
\end{document}